\newtheorem{coro}{Corollary}
\newtheorem{lemm}{Lemma}
\newtheorem{assum}{Assumption}
\newtheorem{theorem}{Theorem}
\newtheorem{prop}{Proposition}
\newtheorem{proof}{Proof}
\DeclareMathOperator*{\argmin}{arg\,min}
\DeclareMathOperator*{\argmax}{arg\,max}
\journal{an international conference}
\begin{document}

\begin{frontmatter}



\title{A bottleneck model with shared autonomous vehicles: \\
Scale economies and price regulations} 


\author[label1]{Koki Satsukawa\corref{cor1}} 
\ead{satsukawa@staff.kanazawa-u.ac.jp}

\author[label2]{Yuki Takayama\corref{cor1}} 
\ead{takayama.y.cc65@m.isct.ac.jp}
\cortext[cor1]{Corresponding authors.}

\affiliation[label1]{organization={Institute of Transdisciplinary Sciences for Innovation, Kanazawa University},
            city={Kanazawa},
            postcode={920-1192}, 
            state={Ishikawa},
            country={Japan}}
            
\affiliation[label2]{organization={Department of Civil and Environmental Engineering, Institute of Science Tokyo},
        city={Meguro},
        postcode={152-8550}, 
        state={Tokyo},
        country={Japan}}

\begin{abstract}
This study examines how scale economies in the operation of shared autonomous vehicles (SAVs) affect the efficiency of a transportation system where SAVs coexist with normal vehicles (NVs).
We develop a bottleneck model where commuters choose their departure times and mode of travel between SAVs and NVs, and analyze equilibria under three SAV fare-setting scenarios: marginal cost pricing, average cost pricing, and unregulated monopoly pricing.
Marginal cost pricing reduces commuting costs but results in financial deficits for the service provider.
Average cost pricing ensures financial sustainability but has contrasting effects depending on the timing of implementation due to the existence of multiple equilibria:
when implemented too early, it discourages adoption of SAVs and increases commuting costs;
when introduced after SAV adoption reaches the monopoly equilibrium level, it promotes high adoption and achieves substantial cost reductions without a deficit.
We also show that expanding road capacity may increase commuting costs under average cost pricing, demonstrating the Downs--Thomson paradox in transportation systems with SAVs.
We next examine two optimal policies that improve social cost, including the operator's profit: the first-best policy that combines marginal cost pricing with congestion tolls, and the second-best policy that relies on fare regulation alone.
Our analysis shows that these policies can limit excessive adoption by discouraging overuse of SAVs.
This suggests that promoting SAV adoption does not always reduce social cost. 
\end{abstract}



\begin{keyword}
bottleneck congestion\sep
mode choice\sep
autonomous vehicles\sep
scale economies\sep
price regulations




\end{keyword}

\end{frontmatter}



\section{Introduction}

\subsection{\textcolor{black}{Background}}

The advent of autonomous vehicles is anticipated to revolutionize urban mobility in the near future. 
A widely discussed scenario involves the adoption of autonomous vehicles through \textit{shared mobility services} provided by transportation companies, rather than individual ownership \citep{narayanan2020shareda}. 
This paradigm shift is expected to improve the efficiency of transportation systems in which shared autonomous vehicles (SAVs) coexist with normal vehicles (NVs).

An essential feature of shared mobility is its inherent tendency to benefit from \textit{scale economies.} 
When vehicles are shared among users, fixed costs (e.g., vehicle acquisition, maintenance, and operational infrastructure) can be distributed across more users, which reduces the average cost per trip. 
Higher utilization rates of shared fleets enable providers to achieve greater operational efficiency, which in turn improves service quality and attracts more users.
While scale economies can enhance efficiency, they also lead to challenges similar to those studied in the context of public transport~\citep{ying2005sensitivity,iryo2019properties,small2024economics}, which may hinder the realization of the expected efficiency gains from this paradigm shift.

\color{black}
{\color{black}
Among these challenges, }
one important implication of scale economies is the potential emergence of \textit{natural monopoly}.
When fixed costs are high and scale economies are strong, a single provider can supply the services demanded by users at a lower cost than any combination of two or more providers.
This cost advantage enables the provider to dominate the market, naturally leading to a monopoly.
Natural monopolies are a common feature of public transport systems and shared mobility services \citep{horcher2021review}.
Without regulation, a monopolistic provider prioritizes profit maximization, resulting in higher fares for commuters and inefficiencies in service provision.
Therefore, effective fare regulations, such as marginal cost pricing or average cost pricing, are essential to mitigate the inefficiencies \citep{varian2024intermediate}.

{\color{black} In addition to these implications,} scale economies also introduce a \textit{feedback loop} between service adoption and operational efficiency.
As more users adopt a shared service, the average cost falls, which improves service quality and attracts even more users.
Conversely, a decline in adoption increases average costs, degrading service quality and further discouraging usage.  
Such feedback mechanisms can lead to multiple equilibria, where the system may settle into a high- or low-adoption equilibrium, depending on initial conditions.

The mechanisms can, in turn, give rise to paradoxical system responses, where seemingly beneficial interventions lead to unintended consequences.
A notable example is the classic Downs--Thomson paradox in public transport \citep{downs1962law,thomson1977great}.
In that context, improvements in road infrastructure reduce public transport ridership, raise its average cost, and degrade its quality.  
A similar dynamic may emerge even as SAVs and NVs share the same road space: as road capacity expands, some commuters may shift to NVs, reducing SAV utilization and increasing average costs. 
This, in turn, leads to a deterioration in service quality and higher total commuting costs.

\color{black}
Recent theoretical work has examined the impacts of SAVs on transportation systems. 
Among these, the bottleneck model has become a central analytical tool because it effectively captures key features of peak-period traffic congestion, which are essential for evaluating commuter behavior under limited road capacity.
Using this framework, many studies have focused on two main effects of SAVs: \textit{the capacity effect,} which increases road throughput by reducing headways, and \textit{the value-of-time (VOT) effect,} which reflects the time savings from allowing commuters to engage in other activities during in-vehicle travel.
These effects have been extensively analyzed in the literature on shared and autonomous mobility systems \citep[e.g.,][]{Van_den_Berg2016-mv,tian2019morning,yu2022will}.\footnote{Other modeling approaches have also been employed to analyze VOT and capacity effects during peak-period congestion. Notably, \citet{dantsuji2024hypercongestiona} adopt a bathtub model, which captures capacity drops that are not incorporated in bottleneck models, to evaluate how AVs influence congestion dynamics through these effects.}

\color{black}
Despite this growing body of work, relatively little attention has been paid to scale economies, which are a defining feature of shared mobility services.
This gap is important because scale economies directly influence how fare structures affect user adoption, operational efficiency, and overall system performance. 
Therefore, important interactions among pricing, adoption behavior, and system efficiency remain insufficiently understood in the current literature.

\subsection{\textcolor{black}{Contribution and overview}}

\color{black}
This study shows the influence of scale economies on the efficiency of transport systems with SAVs. 
To this end, we develop a bottleneck model that incorporates SAVs 
to examine how scale economies and natural monopoly characteristics affect commuters' mode choices and traffic congestion. 
Using this model, we analyze three SAV fare-setting scenarios: marginal cost pricing, average cost pricing, and unregulated monopoly pricing.
In addition, we evaluate two transport policies designed to achieve an efficient transportation system. 
The first-best policy combines 
marginal cost pricing with vehicle-specific, time-varying congestion tolls to address inefficiencies from both natural monopoly and congestion. 
The second-best policy assumes that congestion tolls are not feasible and focuses on fare regulation alone.

\color{black}
{\color{black}
The analysis reveals that fare-setting rules shape SAV adoption through distinct feedback mechanisms induced by scale economies.}
Marginal cost pricing lowers commuting costs by encouraging SAV usage, but it cannot sustain service provision in the presence of fixed costs, resulting in financial deficits.
Average cost pricing ensures cost recovery, but its impacts on system performance depend critically on the level of SAV usage at the time of implementation.
If average cost pricing is implemented when SAV adoption remains limited, the resulting high fares suppress usage and increase commuting costs.
By contrast, when fares are initially unregulated and the provider initially sets profit-maximizing fares, subsequent implementation of average cost pricing can sustain high adoption levels, improve service efficiency, and reduce commuting costs.
This contrast arises from a feedback loop induced by scale economies, whereby adoption levels and average costs mutually reinforce each other.

In addition, road capacity expansion can interact with fare regulation in counterintuitive ways.
Under average cost pricing, expanding road capacity may unintentionally increase total commuting costs.
This occurs because improved road conditions encourage commuters to switch back to NVs, which reduces SAV utilization and raises the average cost of providing the service.
As a result, the commuting cost faced by users increases, demonstrating the Downs--Thomson paradox in transport systems with SAVs.

The policy implications depend critically on the relative strength of two key characteristics of SAVs: the capacity effect and the VOT effect.
Under the first-best policy, SAV adoption is promoted when the capacity effect dominates, whereas it may be reduced when the VOT effect is stronger.
{\color{black} Under the second-best policy, where policy intervention is limited to fare regulation, welfare-improving outcomes may require limiting SAV usage through higher fares when the VOT effect dominates (i.e., excessive SAV adoption can arise without fare regulation).}
A key implication of the policy analysis is that marginal cost pricing does not universally minimize social cost in the absence of congestion tolls.
When the VOT effect dominates the capacity effect, average cost pricing or even unregulated monopoly pricing can yield lower social costs by discouraging excessive SAV adoption.
Overall, these results underscore the importance of aligning fare policy with the underlying technological characteristics of SAVs.

\subsection{\textcolor{black}{Related Literature}}

The implications of scale economies in public transport systems have long been studied in the economics and transport literature.\footnote{For a comprehensive review of the economics of public transport systems, including fare regulation and market structure, see \citet{horcher2021review}.
For a broad survey of the expected impacts, modeling approaches, and policy considerations surrounding SAVs, see \citet{narayanan2020shareda}.} 
Many theoretical studies have examined mode choice between private vehicles and public transport systems characterized by scale economies \citep[e.g.,][]{Tabuchi1993-xi,Danielis2002}.
{\color{black} 
A number of theoretical studies have examined the Downs--Thomson paradox in multimodal transport systems with rail transit \citep[e.g.,][]{arnott2000twomode, basso2012integrating, bell2012road, zhang2014downs,wang2019optimal}.}
In the context of bus transit systems, several theoretical studies have also incorporated scale economies \citep[e.g.,][]{cantarella2015daytoday, li2016dynamics, li2018traffic, Pandey2024-xc}.

A growing body of research has explored the implications of autonomous vehicles (AVs) for urban transportation systems, focusing on their potential to change commuter behavior and alleviate congestion. 
Building on \cite{Vickrey1969-rg}'s bottleneck model, numerous studies have examined AV impacts on departure time choice and mode competition. 
Early works considered the VOT effect and the capacity effect due to AVs \citep[e.g.,][]{Van_den_Berg2016-mv}, while later studies incorporated SAVs, parking constraints, and policy instruments such as dedicated lanes and reservation systems \citep[e.g.,][]{tian2019morning, lamotte2017use, li2022can}. 
Other extensions introduced in-vehicle activity utility \citep{pudane2020departure,Yu2022-dv,wu2023managing} and the competition and cooperation between different car manufacturers that may provide both NVs and AVs \citep{yu2022will}. 
These studies have significantly advanced our understanding of AV-induced behavioral shifts, but they do not explicitly address provider-side issues such as scale economies or fare regulation, which are central to SAV services.\footnote{
Recent studies have considered scale economies in shared mobility systems, particularly on-demand ridepooling services \citep{fielbaum2023economies, fielbaum2024are}.
These studies focus on how demand affects system-level cost structures through matching efficiency, waiting times, and detour lengths. However, they do not use bottleneck models or examine the policy implications of fare regulation or road capacity expansion. Our study complements this literature by extending a bottleneck model to examine how scale economies interact with fare regulations and pricing policies in shaping commuter behavior and system efficiency.}

This study develops a bottleneck model of shared autonomous vehicles that explicitly incorporates scale economies and natural monopoly.
By embedding a fixed cost into a multimodal departure time choice framework, the model enables a unified analysis of marginal cost pricing, average cost pricing, and unregulated monopoly pricing for SAV services.
This framework allows us to examine how fare-setting rules shape SAV adoption, congestion patterns, and system performance through feedback between usage levels and cost recovery.
By providing a transparent setting in which pricing regimes and infrastructure changes can be directly compared within a unified framework, the study offers new insights into the design of fare regulation for shared autonomous mobility under congestion.

\color{black}
The remainder of this paper is organized as follows.
Section 2 presents the bottleneck model with SAVs, incorporating scale economies and natural monopoly characteristics into commuters' departure time and mode choice decisions.
Section 3 analyzes three fare-setting scenarios—marginal cost pricing, average cost pricing, and monopoly pricing—and examines the interaction between fare regulation and road capacity expansion, highlighting the potential for the Downs--Thomson paradox in systems with SAVs.
Section 4 examines the effects of the first-best policy, while Section 5 evaluates those of the second-best policy.
Section 6 concludes by summarizing the main findings and suggesting directions for future research.

\begin{figure}[t]
	\begin{center}
	\hspace{0mm}
    \includegraphics[width=0.6\linewidth,clip]{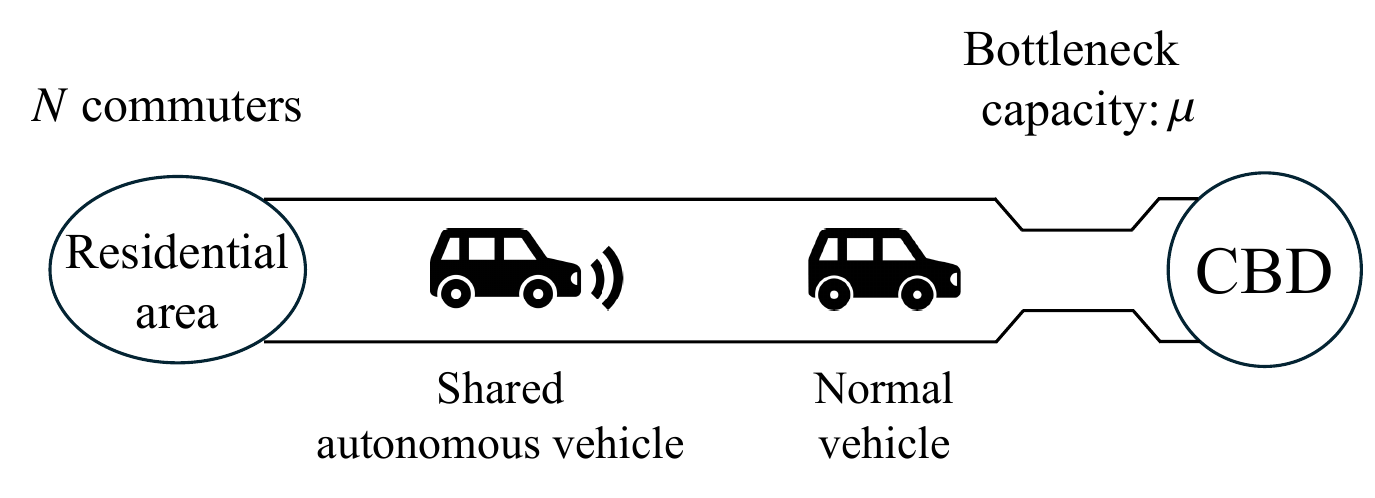}
	\end{center}
    \vspace{-2mm}
	\caption{Network and normal/autonomous vehicles}
    \vspace{-0mm}
    \label{Fig:Network}
\end{figure}

\section{Model settings}\label{Sec:Model}

\subsection{Network, commuters, and transport modes}
Consider a city consisting of a central business district (CBD) and a residential area connected by a single road (Figure~\ref{Fig:Network}). 
The road includes a bottleneck with capacity $\mu$ at its end. 
Queuing congestion is modeled as a point queue that obeys the first-in-first-out (FIFO) principle, consistent with standard bottleneck models~\citep[][]{Vickrey1969-rg,Hendrickson1981-bq,Arnott1990-oa,Arnott1993-jd}.
The free-flow travel time is constant and denoted by $t_f$.

A fixed number of commuters travel to the CBD along this road. 
They are treated as a continuum with total mass $N$. Commuters use either NVs or SAVs. The number of NV commuters is denoted by $N_{n}$, and that of SAV commuters is denoted by $N_{a}$.
\textcolor{black}{
NV commuters own their vehicles and incur a fixed cost term $F_{n}$. This term includes costs related to parking, such as parking fees. By contrast, SAVs are operated by a service provider. Thus, SAV commuters do not incur such costs, and instead pay a fare $p$ to the provider. In addition, using an SAV involves a pickup waiting time. For tractability, we represent its monetary equivalent by a constant $w$, in line with previous studies~\citep[e.g.,][]{Ma2017-vg,Tian2019-ug}.}\footnote{\textcolor{black}{In reality, pickup waiting time may depend on the provider's fleet operations (e.g., fleet sizing, dispatching, and repositioning). This study abstracts from these supply-side operational decisions and treats the pickup waiting time as an exogenous constant to keep the focus on the interaction between scale economies and commuters' demand-side choices. Extensions that endogenize fleet operations are briefly discussed in \Cref{Sec:Conclusion} as directions for future work.
	}}
	
Following \cite{Van_den_Berg2016-mv}, we assume that SAVs have two effects: a capacity effect and a VOT effect.
{\color{black}
For the capacity effect, we assume that the bottleneck capacity becomes $\mu/\kappa$ when an SAV passes through it. Here, $\kappa$ is an aggregate parameter that captures the overall capacity impact of SAV-related technologies, including connectivity and automation.{\interfootnotelinepenalty=10000 \footnote{\textcolor{black}{In mixed traffic, the capacity effects may depend on the penetration rate of connected-vehicle technology. However, the literature shows that heterogeneity in VOT can induce equilibrium sorting that generates temporally concentrated SAV flows in a bottleneck setting. We therefore assume that connectivity operates primarily within these flows, and summarize the resulting capacity effect using the reduced-form parameter $\kappa$.}}}
In this study, we focus on the case $0 < \kappa < 1$, which represents a forward-looking scenario in which mature automation and connectivity allow SAVs to operate safely at shorter headways, thereby making capacity improvements feasible.}
For the VOT effect, SAV commuters experience a reduction because they can engage in other activities during in-vehicle time. Let $\alpha$ denote the VOT for NV commuters for in-vehicle time. The VOT for SAV commuters is $\theta \cdot \alpha$, where $\theta$ $(0 < \theta < 1)$ is the VOT reduction parameter. Without loss of generality, we set $\alpha = 1$.


\subsection{Behavior of commuters and the service provider}
Each commuter chooses their departure time and transport mode to minimize the commuting cost, which consists of free-flow travel time, queuing delay, schedule delay, and mode-specific costs.  
We define $q(t)$ as the bottleneck queuing delay for commuters arriving at the destination (CBD) at time $t$.

The schedule-delay cost arises from the difference between actual and desired arrival times $t_{d}$.
We assume that $t_{d}$ is the same for all commuters, and normalize it to zero without loss of generality.
Following the standard bottleneck literature~\citep[e.g.,][]{Arnott1990-oa}, the cost is represented by a piecewise-linear function.
Let $s(t)$ denote the schedule-delay cost for commuters arriving at time $t\in\mathbb{R}$; it is given by 
\begin{align}
	s(t)=
	\begin{cases}
		-\beta t		\quad & \text{if}\quad t<0,\\
		\gamma t		\quad & \text{if}\quad t\geq 0,
	\end{cases}
\end{align}
where $\beta$ and $\gamma$ are the marginal schedule-delay penalties for early and late arrival, respectively.
We assume $\beta <\theta < 1$ so that the VOTs for all commuters exceed $\beta$.



Then, the commuting costs for each mode are expressed as follows:
\begin{align}
	&c_{n}(t ) = t_{f} + q(t) + s(t) + F_{n},\\
	&c_{a}(t ) = \theta \left\{ t_{f} + q(t) \right\} + s(t) + p  +  w,
\end{align}
where $c_{n}(t)$ and $c_{a}(t)$ represent the commuting costs for NV and SAV commuters arriving at the destination at time $t$, respectively.

The service provider operates SAVs and earns profit as the difference between revenue and total operating costs.
The profit $\pi$ is expressed as 
\begin{align}
    \pi = (p-m)N_{a} - F_{a},\label{Eq:Profit}
\end{align}
where $m$ is the marginal operating cost per commuter (e.g., energy and maintenance); $F_{a}$ is the fixed cost (e.g., fleet capital, software platform, and depots).

Given this profit formulation, the average operating cost is
\begin{align}
AC(N_{a}) = m + \cfrac{F_{a}}{N_{a}},
\end{align}
which is strictly decreasing in $N_{a}$; 
hence the service provider benefits from \textit{scale economies}.
Scale economies often lead to a natural monopoly, under which the provider may set the profit-maximizing fares.
This may result in an inefficient transportation system in which commuters face high commuting costs.
To mitigate such distortions, a public authority commonly introduces \textit{fare regulation}.



In this study, we compare the following three fare-setting scenarios to evaluate the impact of fare regulation on the NV--SAV transportation system.
\begin{itemize}
	\item \textit{Marginal cost (MC) pricing.}  
		The fare is set equal to the marginal operating cost:
		\begin{align}
			p=m.\label{Eq:MCP}
		\end{align}
		In standard microeconomic theory, MC pricing constitutes the first-best benchmark because it minimizes commuting costs.
		However, with a positive fixed cost $F_{a}$, this pricing yields negative profit.
		Therefore, MC pricing is applicable when a public authority operates the SAVs as a form of public transport and is prepared to subsidize the resulting deficit.


	\item \textit{Average cost (AC) pricing.} 
		The fare equals the average operating cost: 
		\begin{align}
			p=m + \cfrac{F_{a}}{N_{a}}.\label{Eq:AverageCostPricing}
		\end{align}
		AC pricing is a standard pricing rule that mitigates allocative inefficiency of a natural monopoly while ensuring the provider's profits remain non-negative.

	\item \textit{Unregulated monopoly pricing}.
		The service provider sets the fare to maximize profit $\pi$ given the demand function $N_{a}(p)$.
		Here, $N_{a}(p)$ denotes the demand for SAV service at fare $p$; its analytical form will be derived later as the equilibrium outcome of commuters' departure-time and mode choices.
		The fare $p$ satisfies the first-order condition of profit maximization:
		\begin{align}
			\cfrac{\partial \pi}{\partial p}=0\quad \Leftrightarrow \quad 
			N_{a}(p) + (p-m)\cfrac{\partial N_{a}(p)}{\partial p} = 0.\label{Eq:Cond_ProfitMax}
		\end{align}
		
\end{itemize}

\subsection{Equilibrium conditions}
We define an equilibrium as a state in which no commuter can reduce their commuting cost by unilaterally changing either their departure time or their travel mode.
Consistent with the existing literature~\citep[e.g.][]{Wu2014-xg,Van_den_Berg2016-mv}, the closed-form solution of the equilibrium is obtained by solving the two equilibrium conditions sequentially.
First, for any given mode-choice pattern $(N_{n}, N_{a})$, we characterize the departure-time choice equilibrium.
This equilibrium determines commuters' departure-time decisions and yields equilibrium commuting costs of each mode given that pattern.
Using these mode-specific commuting costs, we solve the mode-choice condition under which each commuter chooses the travel mode that minimizes their commuting cost.\footnote{This hierarchical structure reflects a two-stage decision process: commuters choose a travel mode first and then select a departure time. The procedure above solves these decisions backwards.}


\subsubsection{Departure-time choice condition}
Because the number of commuters using each mode is fixed, the departure-time choice equilibrium condition has essentially the same structure as the standard bottleneck model with heterogeneous VOT among commuters~\citep[e.g.,][]{Arnott1988-fy}.
Therefore, the equilibrium condition can be expressed as follows:
\begin{align}
	&
	\begin{cases}
		c_{n}(t) = c_{n}^{*}(N_{n},N_{a})\quad &\text{if}\quad n_{n}(t) > 0\\
		c_{n}(t) \geq c_{n}^{*}(N_{n},N_{a})\quad &\text{if}\quad n_{n}(t) = 0
	\end{cases}\quad \forall t\in\mathbb{R},\label{Eq:DTCcond_1}\\
	&
	\begin{cases}
		c_{a}(t) = c_{a}^{*}(N_{n},N_{a})\quad &\text{if}\quad n_{a}(t) > 0\\
		c_{a}(t) \geq c_{a}^{*}(N_{n},N_{a})\quad &\text{if}\quad n_{a}(t) = 0
	\end{cases}\quad \forall t\in\mathbb{R},\label{Eq:DTCcond_2}\\
	&
	\begin{cases}
		n_{n}(t) + \kappa n_{a}(t) = \mu\quad &\text{if}\quad q(t) > 0\\
		n_{n}(t) + \kappa n_{a}(t) \leq \mu\quad &\text{if}\quad q(t) = 0
	\end{cases}
	\quad \forall t\in\mathbb{R},\label{Eq:DTCcond_3}\\
	&\color{black}{\int_{t\in\mathbb{R}}n_{n}(t)\mathrm{d}t = N_{n},\quad \int_{t\in\mathbb{R}}n_{a}(t)\mathrm{d}t = N_{a}},\label{Eq:DTCcond_4}
\end{align}
where $n_{n}(t)$ and $n_{a}(t)$ denote the \textcolor{black}{arrival flow rates of NV and SAV commuters at the destination at time $t$, respectively.}
$c_{n}^{*}(N_{n},N_{a})$, $c_{a}^{*}(N_{n},N_{a})$ represent the mode-specific commuting costs in the departure-time choice equilibrium when the mode-choice pattern is $(N_{n},N_{a})$.

Conditions~\eqref{Eq:DTCcond_1} and \eqref{Eq:DTCcond_2} state the departure-time choice conditions for NV and SAV commuters, respectively.
Condition~\eqref{Eq:DTCcond_3} is the queueing delay condition at the bottleneck: when a queue is present, the total departure-flow rate at the bottleneck equals the bottleneck capacity $\mu$; otherwise, the departure-flow rate does not exceed capacity.\footnote{\textcolor{black}{While we do not model the congestion impact of SAV deadheading (empty-vehicle travel to the next pickup) for analytical tractability, it may be captured in reduced form as an effective reduction in bottleneck capacity (i.e., a lower effective $\mu$) if its impact is approximately stable over the peak period.}}
Owing to the capacity effect of SAVs, their effective departure-flow rate is scaled by $\kappa$.
Condition~\eqref{Eq:DTCcond_4} is the flow conservation condition for each mode.

\subsubsection{Mode choice condition}
Given the mode-specific commuting costs $c_{n}^{*}(N_{n},N_{a})$, $c_{a}^{*}(N_{n},N_{a})$, each commuter chooses either an NV or an SAV so as to minimize their own commuting cost. 
Accordingly, the mode-choice equilibrium condition is expressed as follows:
\begin{align}
	&
	\begin{cases}
	  c_n^*(N_n, N_a) = c_a^*(N_n, N_a) &{\rm if} \quad N_n > 0, ~N_a > 0, \\
	  c_n^*(N_n, N_a) \leq c_a^*(N_n, N_a) &{\rm if} \quad N_a = 0, \\
	  c_n^*(N_n, N_a) \geq c_a^*(N_n, N_a) &{\rm if} \quad N_n = 0, 
	\end{cases}
	\label{Eq:IntegratedEqui-Cond1}
	\\
	&
	N_n+N_a=N. \label{Eq:IntegratedEqui-Cond2}
\end{align}
Condition~\eqref{Eq:IntegratedEqui-Cond1} ensures no commuter can reduce their commuting cost by unilaterally changing modes; Condition~\eqref{Eq:IntegratedEqui-Cond2} is the flow conservation condition. 

\subsubsection{Stability condition}
Because the traffic state that satisfies the mode choice conditions is not necessarily unique, we examine the stability of equilibria if multiple equilibria arise.
Previous studies have shown that scale economies can lead to the existence of multiple equilibria~\citep[][]{Tabuchi1993-xi,Pandey2024-xc}.
In these circumstances, it is crucial to identify which equilibrium is likely to be realized by analyzing how the traffic state evolves as commuters adjust their choices~\citep[][]{Beckmann1956-vr}.

To define the stability of the equilibrium, we model the mode choice as a continuous-time evolutionary dynamic process.
Let $u$ denote a continuous time index measured in days, and let $(N_{n}(u), N_{a}(u))$ be the mode choice pattern at time $u$ where $N_{n}(u) + N_{a}(u) = N$.
Because of this identity, the system reduces to the single state variable $N_{a}(u)$, which evolves according to the following differential equation: 
\begin{align}
\cfrac{\mathrm{d}}{\mathrm{d}u}N_{a}(u) = V(N_{a}(u)),
\end{align}
where $V(N_{a})$ denotes the evolutionary dynamics that is Lipschitz continuous on the one-dimensional simplex $\Delta:=\{\,N_a\in[0,N]\,\}$.

Building on evolutionary game theory~\citep[e.g.,][]{Sandholm2010-ht}, we characterize stability under a broad class of dynamics that satisfy the following two properties, positive correlation (PC) and Nash stationarity (NS):
\begin{align}
	\text{(PC)}\quad &V(N_{a})\cdot (c_{n}^{*}(N_{n},N_{a}) - c_{a}^{*}(N_{n},N_{a})) > 0\ \text{whenever}\ V(N_{a})\neq 0\\
	\text{(NS)}\quad &V(N_{a})=0\ \text{if and only if $(N_{n},N_{a})$ is the equilibrium}.
\end{align}
The PC property requires that, whenever the mode choice pattern is out of rest, the covariance between the growth rate of the number of commuters choosing each mode and its payoff (the negative of its commuting cost) be positive.
The NS property requires that every rest point of the evolutionary dynamics be precisely an equilibrium point.
Specific examples include the best response~\citep[][]{Gilboa1991-xj}, the Brown-von Neumann-Nash~\citep[][]{Brown1951-my}, and the Smith dynamics~\citep[][]{Smith1984-ed}.

Under the evolutionary dynamics, we investigate the \textit{local asymptotic stability} of the equilibrium. 
An equilibrium is locally asymptotically stable when every trajectory that starts sufficiently close remains close and eventually converges to the equilibrium.
This means that the traffic state returns to the original equilibrium through the commuters' rational adjustment behavior even after slight deviations occur.
Hence, such an equilibrium can be regarded as a state likely to be realized.\footnote{A formal definition is provided in \Cref{Sec:App-StabilityAnalysis}, where the stability of multiple equilibria is analyzed.}

\section{Equilibrium analysis}\label{Sec:Equilibrium}

\subsection{Departure time choice equilibrium}

We first derive the departure time choice equilibrium. 
When $N_{n}$ and $N_{a}$ are exogenously fixed, our model follows that in \cite{Van_den_Berg2016-mv}.
The literature showed that the following temporal sorting property holds in the transport system:
\begin{lemm}[\cite{Van_den_Berg2016-mv}]
	In the departure-time choice equilibrium, SAV commuters arrive closer to their desired arrival time than NV commuters do.
\end{lemm}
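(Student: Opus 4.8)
The plan is to establish the sorting in two stages: first, a pairwise no-deviation (exchange) argument shows that every SAV commuter endures a weakly longer queue than every NV commuter; second, the single-peaked shape of the equilibrium queue converts this queue-height ranking into the claimed temporal ranking. Throughout I assume both modes are used ($N_n,N_a>0$), since otherwise the statement is vacuous, and I use only the departure-time conditions \eqref{Eq:DTCcond_1}--\eqref{Eq:DTCcond_2} together with the standing assumption $\beta<\theta<1$.

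First I would fix an equilibrium queue profile $q(\cdot)$ and take any arrival time $t_n$ used by NV commuters and any $t_a$ used by SAV commuters, so that $c_n(t_n)=c_n^{*}$ and $c_a(t_a)=c_a^{*}$. By \eqref{Eq:DTCcond_1}--\eqref{Eq:DTCcond_2} no commuter gains from a unilateral shift to the other instant, which gives $c_n(t_a)\ge c_n^{*}$ and $c_a(t_n)\ge c_a^{*}$. Substituting the cost definitions $c_n(t)=t_f+q(t)+s(t)+F_n$ and $c_a(t)=\theta\{t_f+q(t)\}+s(t)+p$ and cancelling the mode-specific constants yields
\[
q(t_a)+s(t_a)\ \ge\ q(t_n)+s(t_n),\qquad
\theta\,q(t_n)+s(t_n)\ \ge\ \theta\,q(t_a)+s(t_a).
\]
Isolating $s(t_n)-s(t_a)$ in each inequality produces the chain $q(t_a)-q(t_n)\ \ge\ s(t_n)-s(t_a)\ \ge\ \theta\,[q(t_a)-q(t_n)]$, so $(1-\theta)[q(t_a)-q(t_n)]\ge 0$. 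Since $\theta<1$, this forces $q(t_a)\ge q(t_n)$; as $t_n,t_a$ were arbitrary used instants, every SAV commuter faces a queuing delay at least as large as that of any NV commuter. This step is purely algebraic and independent of the signs of $t_n,t_a$.

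Next I would show the equilibrium queue is single-peaked with its apex at the desired time $t=0$. Differentiating the indifference relations behind \eqref{Eq:DTCcond_1}--\eqref{Eq:DTCcond_2}, namely $q+s=\text{const}$ for NV and $\theta q+s=\text{const}$ for SAV, gives the equilibrium slopes of $q$: on used early instants ($t<0$) one finds $q'(t)=\beta$ for NV and $q'(t)=\beta/\theta$ for SAV, both positive, while on used late instants ($t>0$) the slopes are $-\gamma$ and $-\gamma/\theta$, both negative. Hence $q$ rises throughout the early phase and falls throughout the late phase; together with the boundary facts that $q=0$ at the first and last arrivals and that $\beta,\gamma>0$, the arrival support is a single interval straddling the origin and $q$ attains its unique maximum at $t=0$. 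Consequently the superlevel set $\{t:q(t)\ge \bar q\}$, with $\bar q:=\sup_{\text{NV}}q$, is a closed interval $[t^-,t^+]$ containing $0$, and the ranking $q(t_a)\ge q(t_n)$ places all SAV arrivals inside $[t^-,t^+]$ and all NV arrivals in the two outer tails. On each side larger $q$ means smaller $|t|$, so SAV commuters arrive strictly nearer to $t=0$ than NV commuters, which is exactly the assertion of the lemma.

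The main obstacle is this second stage rather than the first: the pairwise inequality $q(t_a)\ge q(t_n)$ is immediate, but it only orders commuters by queue height, and turning ``higher queue'' into ``closer to the desired time'' requires knowing that the congestion period is a single interval with its peak precisely at the kink of $s(\cdot)$. I would secure this from the sign-definite slope computation above: an episode lying entirely in $t<0$ would have $q$ strictly increasing and thus unable to return to zero, and symmetrically for $t>0$, so every congestion episode must straddle the origin and there can be only one, which simultaneously rules out gaps in the support and fixes the apex at $t=0$. The only slack is a measure-zero set of ties, where an NV and an SAV share a common boundary queue level; such ties are consistent with the weak reading of the statement and do not affect the sorting.
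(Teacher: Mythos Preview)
Your argument is correct. The paper does not actually prove this lemma: it is attributed to \cite{Van_den_Berg2016-mv}, and the only justification given in the text is the one-line intuition that the SAV commuters' effective schedule-to-VOT ratio $\beta/(\theta\alpha)$ exceeds the NV ratio $\beta/\alpha$, so by the classical heterogeneous-VOT result \citep{Arnott1988-fy} the higher-ratio group arrives nearer the desired time.

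Your proof is a self-contained unpacking of precisely that intuition. Your Stage~1 exchange inequality $(1-\theta)[q(t_a)-q(t_n)]\ge 0$ is the standard revealed-preference step and is clean. Your Stage~2 slope computation is in fact the mechanism behind the paper's remark: the slopes $q'=\beta$ versus $q'=\beta/\theta$ (and $-\gamma$ versus $-\gamma/\theta$) are exactly the ratios $\beta/\alpha$ and $\beta/(\theta\alpha)$ the paper invokes, and your observation that a congestion episode with sign-definite slope on each side of the origin must straddle $t=0$ and be unique is what makes the informal ``higher ratio $\Rightarrow$ closer to $t_d$'' statement rigorous. So the two approaches are the same in substance; you have simply supplied the details that the paper outsources to the cited reference.
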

\noindent This is because the VOT for SAV commuters is scaled by $\theta < 1$: the ratio of their marginal schedule cost to VOT, $\beta/(\theta \alpha)$, exceeds that for NV commuters, $\beta/\alpha$.
Consistent with classical bottleneck models, commuters with a higher effective schedule cost choose arrival times that are closer to their desired arrival time.

Using the temporal sorting property, the equilibrium commuting costs for NV and SAV are uniquely derived as follows:
\begin{align}
	&c_{n}^{*}(N_{n}, N_{a})= 
	\cfrac{\beta \gamma}{\mu(\beta + \gamma)}(N_{n} + \kappa N_{a}) + t_{f} + F_{n}\label{Eq:DepartEquiCost-normal},\\
	&c_{a}^{*}(N_{n}, N_{a})= 
	\cfrac{\beta \gamma}{\mu(\beta + \gamma)}(\theta N_{n} + \kappa N_{a}) + \theta t_{f} + p  + w.\label{Eq:DepartEquiCost-autonomous}
\end{align}

Eqs.~\eqref{Eq:DepartEquiCost-normal} and \eqref{Eq:DepartEquiCost-autonomous} show that the VOT reduction parameter $\theta$ affects only the commuting cost for SAV commuters, whereas the capacity-expansion parameter $\kappa$ influences the commuting costs for both NV and SAV commuters.
This difference arises because the capacity effect shortens the SAV rush-hour period, thereby shifting the destination arrival times of NV commuters.
Because of the temporal sorting property, the shorter the SAV rush-hour window, the closer NV commuters can arrive at their desired arrival time.
Consequently, when the capacity effect strengthens (i.e., either because $\kappa$ decreases or because the number of SAV commuters increases), NV schedule costs decrease, which reduces their commuting costs.

By contrast, a decrease in $\theta$ reduces the VOT for SAV commuters only; it reduces their free-flow travel time and queuing-delay costs but leaves NV commuters unaffected.
These asymmetric impacts of $\kappa$ and $\theta$ play an important role in the social cost analyses in \Cref{Sec:FirstBest} and \Cref{Sec:SecondBest}.

\subsection{Mode choice equilibrium}
We then derive the mode choice equilibrium using the mode-specific commuting costs \eqref{Eq:DepartEquiCost-normal} and \eqref{Eq:DepartEquiCost-autonomous} under the departure time choice equilibrium.
In the remainder of this section, we present closed-form equilibria for each fare-setting scenario introduced in Section \ref{Sec:Model} (see \Cref{Sec:App-ClosedFormSolution_DUE} for the derivation details).

\subsubsection{Marginal cost pricing}
We first focus on MC pricing by setting the fare equal to the marginal cost, $p=m$.
Substituting this condition into Eqs.~\eqref{Eq:DepartEquiCost-normal}--\eqref{Eq:DepartEquiCost-autonomous} and the equilibrium conditions \eqref{Eq:IntegratedEqui-Cond1}--\eqref{Eq:IntegratedEqui-Cond2}, we derive the closed-form mode choice equilibrium, as follows:
\begin{prop}
	Define
	\begin{align}
		A := \cfrac{\beta\gamma(1-\theta)}{(\beta+\gamma)\mu},\quad
		B := \theta t_{f} + m  + w - (t_{f} + F_{n}),
	\end{align}
	and suppose that 
	\begin{align}
		&0<B<AN\label{Eq:Cond_NonExtreme}.
	\end{align}
	Then, both NV and SAV modes are chosen in equilibrium, and the corresponding numbers are uniquely determined by
	\begin{align}
		N_{n}^{MC*} = \cfrac{B}{A},\quad N_{a}^{MC*}=\cfrac{AN-B}{A}.\label{Eq:Nums_MCPricing}
	\end{align}
	If condition~\eqref{Eq:Cond_NonExtreme} does not hold, all commuters choose a single mode: either all NVs or all SAVs.
\end{prop}
\noindent 
This proposition suggests that the share of SAV commuters increases in cities with heavy congestion and long travel times: $N_{a}$ increases when (i) the population size $N$ is large; (ii) the capacity $\mu$ is small or the free-flow travel time $t_{f}$ is long; and (iii) the penalty parameters $\beta$ and $\gamma$ are large.
This is because the VOT effect of SAVs increases the incentive to use them as travel times become longer.


Condition~\eqref{Eq:Cond_NonExtreme} rules out corner cases in which all commuters choose a single mode.
$B$ denotes the difference in mode-specific fixed costs when the fare is set to MC.
The inequality $B>0$ ensures that, even at this low fare, the fixed cost of an SAV exceeds that of an NV; hence, some commuters still choose NVs.
The condition $AN>B$ requires a population to be large enough for congestion delays: commuters have an incentive to choose SAVs.
As shown later, these two inequalities are necessary for both modes to be used not only in the benchmark case but also under AC pricing and unregulated monopoly pricing.
They are consistent with our aim of analyzing how fare regulations affect the bimodal transportation system.
We therefore impose the following assumption for the remainder of the paper:
\begin{assum}
	Parameters satisfy Condition~\eqref{Eq:Cond_NonExtreme}.	
\end{assum}

\subsubsection{Average cost pricing}
We next consider AC pricing, under which the fare equals the average cost, $p=AC(N_{a})$.
Combining this condition with the mode choice equilibrium conditions, we derive the following proposition, which shows that AC pricing may give rise to multiple equilibria:
\begin{prop}\label{Prop:Equi_ACP}
	Suppose that the following condition holds:
	\begin{align}
		(AN-B)^{2}-4AF_{a} \geq 0.\label{Eq:Cond_NonUnique}
	\end{align}
	Then, multiple equilibria exist, and the number of SAV commuters at each equilibrium is given as follows:{\color{black} 
    \begin{align}
		&N_{a0}^{AC*}=0,\\
		&N_{a1}^{AC*}=\cfrac{AN-B-K}{2A},\\
		&N_{a2}^{AC*}=\cfrac{AN-B+K}{2A},
	\end{align}
	\noindent where $K = \sqrt{(AN-B)^{2}-4AF_{a}}$.}
	If Condition~\eqref{Eq:Cond_NonUnique} does not hold, the equilibrium is unique, and the equilibrium number of SAV commuters is given by $N_{a0}^{AC*}=0$.
\end{prop}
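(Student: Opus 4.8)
The plan is to collapse the mode-choice equilibrium condition \eqref{Eq:IntegratedEqui-Cond1} under AC pricing into a single scalar equation in $N_a$ and then solve it branch by branch. First I would substitute the AC-pricing fare $p = m + F_a/N_a$ into the SAV cost \eqref{Eq:DepartEquiCost-autonomous} and form the cost gap $\Delta(N_a) := c_n^*(N_n,N_a) - c_a^*(N_n,N_a)$. Using $N_n = N - N_a$ from \eqref{Eq:IntegratedEqui-Cond2} together with the definitions of $A$ and $B$, the $t_f$, $F_n$, and $m$ terms combine into $-B$ and the common $\kappa N_a$ term cancels, leaving
\begin{align}
\Delta(N_a) = (AN - B) - A N_a - \frac{F_a}{N_a}.
\end{align}

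Next I would treat the branches of \eqref{Eq:IntegratedEqui-Cond1} separately. An interior equilibrium ($0 < N_a < N$) requires $\Delta(N_a) = 0$; multiplying through by $N_a > 0$ clears the denominator and yields the quadratic
\begin{align}
A N_a^2 - (AN - B) N_a + F_a = 0.
\end{align}
Its discriminant is precisely $(AN-B)^2 - 4 A F_a$, so real interior roots exist if and only if Condition~\eqref{Eq:Cond_NonUnique} holds, and the quadratic formula returns exactly the stated $N_{a1}^{AC*}$ and $N_{a2}^{AC*}$. I would then confirm these are genuine, feasible equilibria: by Vieta's formulas the product of the roots is $F_a/A > 0$ and their sum is $(AN-B)/A > 0$ (positivity of the sum uses $AN > B$ from Assumption~1), so both roots are strictly positive; feasibility $N_{a2}^{AC*} < N$ follows from $B > 0$, since $\sqrt{(AN-B)^2 - 4AF_a} \le AN - B < AN + B$.

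For the corner branch I would use the limiting sign of $\Delta$ rather than direct substitution. As $N_a \to 0^+$ the term $-F_a/N_a \to -\infty$, so $\Delta < 0$, meaning NVs are strictly cheaper and no commuter wishes to become the lone SAV user forced to absorb the entire fixed cost $F_a$; hence $N_{a0}^{AC*} = 0$ satisfies the all-NV branch of \eqref{Eq:IntegratedEqui-Cond1} for every parameter value. A symmetric check at the opposite corner gives $\Delta = -B - F_a/N < 0$, so an all-SAV state is never an equilibrium, consistent with the proposition listing only $N_a = 0$ among the corners. Combining the three cases yields three equilibria when \eqref{Eq:Cond_NonUnique} holds and the unique equilibrium $N_{a0}^{AC*} = 0$ when it fails.

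The main obstacle is the corner analysis, not the quadratic: the fare $p = m + F_a/N_a$ is singular at $N_a = 0$, so the all-NV equilibrium cannot be verified by plugging in a value and must instead be justified through the limiting behavior of $\Delta$. This singularity is the economic core of the result—it encodes the feedback loop by which vanishing adoption drives the average cost, and thus the fare, unboundedly high—and it is exactly what makes $N_a = 0$ a robust equilibrium that survives even when Condition~\eqref{Eq:Cond_NonUnique} fails and the two interior equilibria vanish.
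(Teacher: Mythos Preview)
Your proposal is correct and follows essentially the same route as the paper's derivation in Appendix~\ref{Sec:App-ClosedFormSolution_DUE}: substitute the AC fare into the cost-equalization condition, clear denominators to obtain the quadratic $A N_a^{2}-(AN-B)N_a+F_a=0$, invoke the discriminant, and justify the $N_a=0$ corner by the divergence of the fare. Your version is slightly more complete in that you explicitly verify feasibility ($0<N_{a1}^{AC*}\le N_{a2}^{AC*}<N$ via Vieta and the bound $\sqrt{(AN-B)^2-4AF_a}\le AN-B<AN+B$) and rule out the all-SAV corner, whereas the paper handles these implicitly through the positivity of $N_{n1}^{AC*},N_{n2}^{AC*}$.
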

\noindent We see that SAVs are used only when Condition~\eqref{Eq:Cond_NonUnique} holds.
This condition does not hold, for example, when $N$ is small or $F_{a}$ is large.
In such situations, the small population relative to the fixed cost increases the average operating cost, which in turn drives up the fare, making SAVs less attractive to commuters.

We next examine the stability of the multiple equilibria:
\begin{prop}
	Suppose that Condition~\eqref{Eq:Cond_NonUnique} holds with strict inequality.
    Then, the equilibria $N_{a0}^{AC*}$ and $N_{a2}^{AC*}$ are locally asymptotically stable; the equilibrium $N_{a1}^{AC*}$ is unstable.
    When equality holds, the two equilibria $N_{a1}^{AC*}$ and $N_{a2}^{AC*}$ coincide.
	The coincident equilibrium is unstable, and the equilibrium $N_{a0}^{AC*}$ is stable.
\end{prop}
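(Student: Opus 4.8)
The plan is to reduce the stability question to a one-dimensional phase-line analysis driven entirely by the sign of the mode-cost difference, and then to read off the local behavior of each rest point from the sign pattern of that difference on either side. The starting point is to form the excess-cost function $g(N_a) := c_n^*(N-N_a, N_a) - c_a^*(N-N_a, N_a)$ under AC pricing. Substituting $p = m + F_a/N_a$ and the closed forms \eqref{Eq:DepartEquiCost-normal}--\eqref{Eq:DepartEquiCost-autonomous} into this difference, and using flow conservation $N_n = N - N_a$, I expect the queueing terms proportional to $\kappa N_a$ to cancel between the two modes and the expression to collapse to
\begin{align}
g(N_a) = A(N - N_a) - B - \frac{F_a}{N_a},
\end{align}
with $A$ and $B$ as defined in Proposition~1. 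The key structural point is that properties (PC) and (NS) pin down $\mathrm{sign}\,V(N_a) = \mathrm{sign}\,g(N_a)$ whenever $V \neq 0$, and that $V$ vanishes precisely at the equilibria. Hence the qualitative phase line of $\mathrm{d}N_a/\mathrm{d}u = V(N_a)$---and therefore every conclusion about local asymptotic stability---is independent of the particular admissible dynamic and is governed entirely by the sign of $g$.

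The main computational step is to factor $g$ through the roots found in Proposition~\ref{Prop:Equi_ACP}. Multiplying by $N_a > 0$ turns $g(N_a) = 0$ into the quadratic $A N_a^2 - (AN-B)N_a + F_a = 0$, whose roots are exactly $N_{a1}^{AC*}$ and $N_{a2}^{AC*}$, so I can write
\begin{align}
g(N_a) = -\frac{A}{N_a}\bigl(N_a - N_{a1}^{AC*}\bigr)\bigl(N_a - N_{a2}^{AC*}\bigr).
\end{align}
Since $A > 0$ (because $0 < \theta < 1$), reading off signs on the three intervals $(0, N_{a1}^{AC*})$, $(N_{a1}^{AC*}, N_{a2}^{AC*})$, and $(N_{a2}^{AC*}, N)$ gives $g < 0$, $g > 0$, and $g < 0$, respectively. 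For the interior equilibria this immediately classifies $N_{a2}^{AC*}$ as locally asymptotically stable, since $g$ crosses from $+$ to $-$ there and trajectories on both sides flow inward, and classifies $N_{a1}^{AC*}$ as unstable, since $g$ crosses from $-$ to $+$ and both sides flow outward. Equivalently, the same conclusion follows from $g'(N_a^*) = -A + F_a/(N_a^*)^2$ evaluated via the Vieta relation $N_{a1}^{AC*} N_{a2}^{AC*} = F_a/A$, which gives $g'(N_{a1}^{AC*}) = A(N_{a2}^{AC*}-N_{a1}^{AC*})/N_{a1}^{AC*} > 0$ and $g'(N_{a2}^{AC*}) < 0$.

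The boundary point $N_{a0}^{AC*} = 0$ requires a separate argument because the fare, and hence $c_a^*$, diverges there: as $N_a \to 0^+$ the term $-F_a/N_a$ dominates, so $g(N_a) \to -\infty$ and $V < 0$ on a right-neighborhood of $0$. Any admissible upward perturbation therefore flows back to $0$, making $N_{a0}^{AC*}$ locally asymptotically stable on the simplex $\Delta$. Finally, for the degenerate (equality) case $(AN-B)^2 = 4AF_a$ the two interior roots merge into a double root $N_a^c = (AN-B)/(2A)$, and the factorization becomes $g(N_a) = -\frac{A}{N_a}(N_a - N_a^c)^2 \le 0$, with equality only at $N_a^c$. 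I anticipate this degenerate case to be the one subtlety worth flagging: because $g'(N_a^c) = 0$, linearization is inconclusive and one must argue directly from the sign structure. Since $g \le 0$ throughout, $V \le 0$ everywhere, so trajectories starting just below $N_a^c$ move away from it toward $0$ while those just above converge to it; this one-sided (semi-stable) behavior fails local asymptotic stability, so the coincident equilibrium is unstable, whereas $N_{a0}^{AC*}=0$ remains stable by the same divergence argument as before.
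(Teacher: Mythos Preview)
Your proof is correct and rests on the same mechanism as the paper's argument: properties (PC) and (NS) reduce the stability question to the sign of the cost gap $c_n^* - c_a^*$ in a neighborhood of each rest point, and the one-dimensional phase line then determines local asymptotic stability. The paper, however, carries this out through three separate lemmas---one per equilibrium---each of which substitutes $N_a = N_{ai}^{AC*} \pm \delta$ and computes the sign of $c_a^* - c_n^*$ directly, followed by an explicit monotone-bounded-convergence argument for Lyapunov stability and attractiveness. Your global factorization $g(N_a) = -\tfrac{A}{N_a}\bigl(N_a - N_{a1}^{AC*}\bigr)\bigl(N_a - N_{a2}^{AC*}\bigr)$ is a cleaner packaging of the same computation: it delivers the sign pattern on all three subintervals simultaneously, handles the degenerate equality case by collapsing to a perfect square, and yields the derivative signs at the interior equilibria via the Vieta relation without case work. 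The only place where your sketch is lighter than the paper's proof is the passage from ``sign pattern of $g$'' to ``local asymptotic stability'': the paper spells out the Lyapunov-stability and convergence steps, whereas you invoke the standard phase-line conclusion. For a one-dimensional Lipschitz dynamic this is routine, but if full rigor is required you could add the monotone-convergence argument the paper uses.
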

\begin{proof}
	See \Cref{Sec:App-StabilityAnalysis}.
\end{proof}

\noindent This proposition shows that when Condition~\eqref{Eq:Cond_NonUnique} holds with strict inequality, the equilibria with the smallest and largest shares of SAVs are stable, whereas the equilibrium with the intermediate share is unstable.

The stability result suggests that SAV adoption may be hindered if AC pricing is implemented at its early stages.
Specifically, if AC pricing is implemented before the number of SAV commuters $N_{a}$ exceeds $N_{a1}^{AC*}$, the system converges to the stable zero-adoption equilibrium, $N_{a0}^{AC*}$.
The mechanism lies in the positive feedback effect created by AC pricing between the number of SAV commuters and their commuting costs.
Once the number of SAV commuters falls below the unstable equilibrium $N_{a1}^{AC*}$, the commuting cost for SAV commuters exceeds that for NV commuters, i.e., $c_{n}^{*} < c_{a}^{*}$.
This is because under AC pricing, fewer commuters raise the regulated fare, thereby increasing the SAV commuting cost.
Hence, given evolutionary dynamics $V(N_{a})$ satisfying the PC property, commuters have an incentive to switch from SAVs to NVs and no incentive to switch back.
As a result, the number of SAV commuters keeps falling, and the system ultimately converges to the stable equilibrium $N_{a0}^{AC*}$ in which no one uses SAVs.

Conversely, if AC pricing is implemented after $N_{a}$ exceeds $N_{a1}^{AC*}$, the system converges to the high-adoption equilibrium $N_{a2}^{AC*}$.
This occurs because the positive feedback effect drives $N_{a}$ upward: as $N_{a}$ increases, the fare decreases, reducing the commuting cost for SAV commuters below that for NV commuters, i.e., $c_{n}^{*} > c_{a}^{*}$.
NV commuters thus have an incentive to switch to SAVs under the evolutionary dynamics, which further increases $N_{a}$.
Therefore, the timing of implementing AC pricing should be carefully coordinated with the current level of SAV adoption.

\begin{figure}[t]
	\begin{center}
	\hspace{0mm}
    \includegraphics[width=0.8\linewidth,clip]{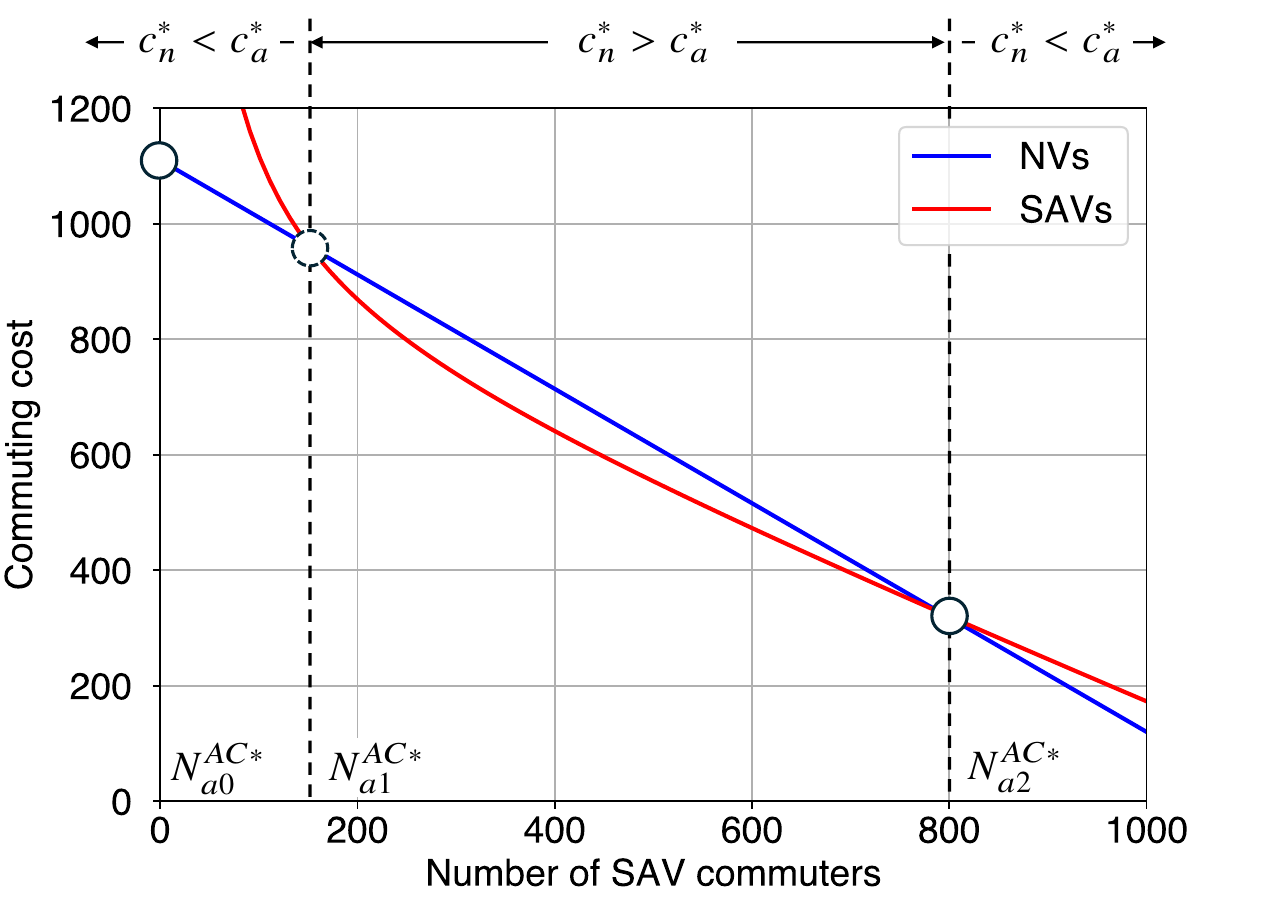}
	\end{center}
    \vspace{-2mm}
	\caption{Mode-specific commuting costs and the number of SAV commuters; solid and dashed circles represent the stable and unstable equilibria, respectively.
	\textcolor{black}{$N=1000$, $\kappa = 0.01$, $\theta = 0.7$, $\beta = 0.4$, $\gamma = 0.4$, $\mu = 0.2$, $t_{f} = 10$, $F_{n} = 100$, $m=100$, $w = 20$, $F_{a}=36000$}.
    }
    \vspace{-0mm}
    \label{Fig:CostCompare}
\end{figure}

We can graphically confirm the stability result from Figure~\ref{Fig:CostCompare}, which shows the relationship between the mode-specific commuting costs and $N_{a}$.
Since commuters choose transport modes to minimize their commuting costs, $N_{a}$ increases if $c_{n}^{*}>c_{a}^{*}$, meaning that the traffic state moves to the right in the figure.
Conversely, if $c_{n}^{*}<c_{a}^{*}$, $N_{a}$ decreases and the state moves to the left.
We thus confirm that traffic states converge to either $N_{a0}^{AC*}$ or $N_{a2}^{AC*}$, depending on whether $N_{a}$ is below or above the threshold $N_{a1}^{AC*}$.

\subsubsection{Unregulated monopoly pricing}
We lastly consider a monopoly situation.
When commuters use both transport modes, the demand function $N_{a}(p)$ is obtained by solving the equation $c^{*}_{n}(N_{n},N_{a}) = c^{*}_{a}(N_{n},N_{a})$, as follows:
\begin{align}
	N_{a}(p) = N - \cfrac{B+p-m}{A}.\label{Eq:DemandFunction}
\end{align}
By substituting this into the profit maximization condition~\eqref{Eq:Cond_ProfitMax}, the optimal fare for the service provider is expressed as:
\begin{align}
	p^{m*}=m + \cfrac{AN-B}{2}\label{Eq:Fare_Monopoly}.
\end{align}

We then derive the equilibrium number of SAV commuters $N^{m*}_{a}$ by combining the demand function with the optimal fare.
Substituting $N^{m*}_{a}$ and $p^{m*}$ into \eqref{Eq:Profit} yields the condition under which the service provider makes a profit.
These results are summarized in the following proposition:
\begin{prop}
    Suppose that Condition~\eqref{Eq:Cond_NonUnique} holds.
	Then, the profit $\pi$ in \eqref{Eq:Profit} becomes non-negative at the equilibrium.
    The equilibrium number of SAV commuters is expressed as follows:
    \begin{align}
    	N^{m*}_{a} = \cfrac{AN-B}{2A}.\label{Eq:AutoNum_Monopoly}
    \end{align}
\end{prop}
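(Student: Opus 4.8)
The plan is to combine the two closed-form expressions derived immediately above the statement: the interior demand function $N_a(p)$ in \eqref{Eq:DemandFunction} and the profit-maximizing fare $p^{m*}$ in \eqref{Eq:Fare_Monopoly}. First I would obtain the equilibrium quantity by direct substitution of $p^{m*}$ into $N_a(p)$. Writing $p^{m*}-m=(AN-B)/2$ and inserting it into \eqref{Eq:DemandFunction} gives $N_a^{m*}=N-\tfrac{1}{A}\bigl(B+\tfrac{AN-B}{2}\bigr)$, which collapses to $(AN-B)/(2A)$ once the terms are placed over the common denominator $2A$. This yields \eqref{Eq:AutoNum_Monopoly}.

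Before asserting that this is genuinely the monopoly equilibrium, I would record three easy consistency checks, all resting only on $A>0$ (which holds since $A=\beta\gamma(1-\theta)/[(\beta+\gamma)\mu]>0$ under the standing parameter restrictions) and on Assumption~1, i.e.\ $0<B<AN$ from \eqref{Eq:Cond_NonExtreme}. (i) The candidate is interior: $N_a^{m*}>0$ because $AN-B>0$, and $N_a^{m*}<N$ because $AN-B<2AN$. Hence the interior demand function \eqref{Eq:DemandFunction} is indeed the relevant object and the first-order condition \eqref{Eq:Cond_ProfitMax} applies. (ii) The critical point is a maximum, not a minimum: substituting \eqref{Eq:DemandFunction} into \eqref{Eq:Profit} renders $\pi$ a quadratic in $p$ with second derivative $-2/A<0$, so $\pi$ is strictly concave and $p^{m*}$ is its unique maximizer. (iii) Since the unconstrained optimum already lies in the feasible interior, no corner configuration (all-NV or all-SAV) can dominate it, so $(N_n^{m*},N_a^{m*})$ is the monopoly equilibrium.

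For the profit claim I would evaluate \eqref{Eq:Profit} at the optimum. Using $p^{m*}-m=(AN-B)/2$ together with $N_a^{m*}=(AN-B)/(2A)$ gives
\[
\pi^{m*}=(p^{m*}-m)\,N_a^{m*}-F_a=\frac{(AN-B)^2}{4A}-F_a .
\]
Multiplying through by $4A>0$ shows that $\pi^{m*}\ge 0$ if and only if $(AN-B)^2-4AF_a\ge 0$, which is precisely Condition~\eqref{Eq:Cond_NonUnique}. Thus the hypothesis of the proposition delivers non-negative profit at the equilibrium.

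There is no genuine obstacle here: the result is essentially a substitute-and-simplify exercise. The only points demanding care rather than difficulty are confirming the sign of $A$ and the strict concavity (so that $p^{m*}$ is truly a maximizer of $\pi$ and not merely a stationary point), and verifying that the solution stays interior so that \eqref{Eq:DemandFunction} is the correct demand to differentiate. Once these are in place the equivalence between $\pi^{m*}\ge 0$ and Condition~\eqref{Eq:Cond_NonUnique} is immediate, and it is worth remarking on the structural fact that the same discriminant $(AN-B)^2-4AF_a$ governs both the existence of SAV-adoption equilibria under AC pricing in Proposition~\ref{Prop:Equi_ACP} and the non-negativity of monopoly profit here.
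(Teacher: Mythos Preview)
Your proposal is correct and follows essentially the same route as the paper: substitute $p^{m*}$ from \eqref{Eq:Fare_Monopoly} into the demand function \eqref{Eq:DemandFunction} to obtain \eqref{Eq:AutoNum_Monopoly}, verify the solution is interior under Assumption~1, and then evaluate \eqref{Eq:Profit} at the optimum to see that $\pi^{m*}=(AN-B)^{2}/(4A)-F_{a}\ge 0$ is equivalent to Condition~\eqref{Eq:Cond_NonUnique}. Your explicit second-order check (strict concavity of $\pi$ in $p$) is a welcome piece of rigor that the paper leaves implicit.
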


\noindent Interestingly, the condition under which profit becomes non-negative is identical to Condition~\eqref{Eq:Cond_NonUnique}.
This means that profit becomes positive only when SAVs are used under average cost pricing, such as when $N$ is large or $F_{a}$ is small.
Therefore, a sufficiently large market size is also crucial for promoting the service in an economically viable manner.

When Condition~\eqref{Eq:Cond_NonUnique} does not hold, the service provider will incur a deficit and withdraw from the market.
We therefore assume that $N_{a}^{m*}=0$ in this case.

%
%

\subsection{Comparison of equilibria under different fare-setting scenarios}\label{Sec:Equilibrium_Compare}

\subsubsection{Mode choice and cost patterns}
This section compares equilibrium outcomes under three fare-setting scenarios.
We first establish the following theorem on the ordering of the number of SAV commuters and associated commuting costs across the three scenarios:
\begin{theorem}\label{Theo:CompareEqui}
	Suppose that Condition~\eqref{Eq:Cond_NonUnique} holds.
	Then, the equilibrium number of SAV commuters under each fare-setting scenario satisfies the following relationship:
	\begin{align}
		N_{a}^{MC*}>N_{a2}^{AC*}\geq N_{a}^{m*}\geq N_{a1}^{AC*}>N_{a0}^{AC*}=0.\label{Eq:Compare_Number}
	\end{align} 
	Similarly, the equilibrium commuting cost satisfies,
	\begin{align}
		c^{MC*}<c_{2}^{AC*}\leq c^{m*}\leq c_{1}^{AC*}<c_{0}^{AC*}.
        \label{Eq:Compare_Cost}
	\end{align}
    The subscripts in the notations represent the corresponding equilibria.
    When Condition~\eqref{Eq:Cond_NonUnique} holds with strict inequality, the relationships \eqref{Eq:Compare_Number} and \eqref{Eq:Compare_Cost} also hold with strict inequalities.

    Suppose that Condition~\eqref{Eq:Cond_NonUnique} does not hold.
    Then, the following relationships hold:
    \begin{align}
        &N_{a}^{MC*}>N_{a}^{m*}=N_{a0}^{AC*}=0,\\
        &c^{MC*}<c^{m*}=c_{0}^{AC*}.
    \end{align}
\end{theorem}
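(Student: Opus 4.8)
The plan is to reduce every inequality in \eqref{Eq:Compare_Number} and \eqref{Eq:Compare_Cost} to two elementary facts: that the interior equilibrium populations are simple algebraic functions of the two quantities $D := AN - B$ and $S := \sqrt{(AN-B)^2 - 4AF_a}$, and that the equilibrium commuting cost depends on the fare-setting scenario \emph{only} through the resulting number of SAV commuters $N_a$, in a strictly monotone way.

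First I would rewrite the four interior equilibria in these terms: $N_a^{MC*} = D/A$, $N_{a2}^{AC*} = (D+S)/(2A)$, $N_a^{m*} = D/(2A)$, and $N_{a1}^{AC*} = (D-S)/(2A)$, while $N_{a0}^{AC*} = 0$. Assumption~1 (Condition~\eqref{Eq:Cond_NonExtreme}) gives $D > 0$; Condition~\eqref{Eq:Cond_NonUnique} gives $S \geq 0$; and $A > 0$ together with the positive fixed cost $F_a$ give $S < D$, with $S > 0$ precisely when \eqref{Eq:Cond_NonUnique} is strict. With these facts the number chain \eqref{Eq:Compare_Number} is immediate by term-by-term comparison: $N_a^{MC*} > N_{a2}^{AC*}$ and $N_{a1}^{AC*} > 0$ both reduce to $D > S$; the middle relations $N_{a2}^{AC*} \geq N_a^{m*} \geq N_{a1}^{AC*}$ reduce to $S \geq 0$; and each weak inequality collapses to equality exactly when $S = 0$, i.e. when \eqref{Eq:Cond_NonUnique} holds with equality.

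The core step for the cost chain is to observe that substituting $N_n = N - N_a$ into the equilibrium cost \eqref{Eq:DepartEquiCost-normal} yields $c^* = \frac{\beta\gamma}{\mu(\beta+\gamma)}\bigl(N - (1-\kappa)N_a\bigr) + t_f + F_n$, which is \emph{independent of the fare} $p$ and, because $0 < \kappa < 1$, strictly decreasing in $N_a$. At the corner $N_a = 0$ the realized cost is the NV-only cost $c_n^*(N,0)$, which is this same expression evaluated at $N_a = 0$, so a single monotone function governs all five equilibria. Hence the cost ordering \eqref{Eq:Compare_Cost} is simply the reverse of the population ordering \eqref{Eq:Compare_Number} and inherits its pattern of strict versus weak inequalities. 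The degenerate case where \eqref{Eq:Cond_NonUnique} fails is handled separately: monopoly then yields $N_a^{m*} = 0$, coinciding with the zero-adoption AC equilibrium, so both share the NV-only cost, while MC pricing still gives $N_a^{MC*} = D/A > 0$ and therefore a strictly lower cost.

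I expect the main obstacle to be conceptual rather than computational: the algebra becomes routine once $D$ and $S$ are introduced, but the argument hinges on recognizing that the common interior equilibrium cost is fare-independent, so that the entire cost ordering is forced by the population ordering, and on checking that this interior formula extends continuously to the $N_a = 0$ corner. Care is also needed to track which inequalities are strict and which may bind, all of which is controlled by the single dichotomy $S = 0$ versus $S > 0$.
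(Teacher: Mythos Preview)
Your proposal is correct. The paper does not write out a self-contained proof of this theorem: it derives the closed-form expressions for each $N_a^{*}$ and each $c^{*}$ in Appendix~A (using $K=\sqrt{(AN-B)^2-4AF_a}$, your $S$) and then treats the pairwise comparisons as evident from those formulas. Your route---first ordering the $N_a$ values via the elementary inequalities $0\le S<D$, and then transferring the ordering to costs in one stroke via the observation that $c^{*}=c_n^{*}(N-N_a,N_a)=\frac{\beta\gamma}{\mu(\beta+\gamma)}\bigl(N-(1-\kappa)N_a\bigr)+t_f+F_n$ is fare-independent and strictly decreasing in $N_a$---is a genuine economy over comparing the five explicit cost expressions pairwise, and it makes transparent why the strict/weak pattern in the cost chain mirrors the number chain. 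The paper does implicitly use this same monotone relation later (e.g., when differentiating $c^{*}$ in the capacity-expansion analysis of Section~3.3.2), but it does not articulate it as the engine of Theorem~\ref{Theo:CompareEqui}; your write-up makes that structure explicit.
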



\textbf{Theorem~\ref{Theo:CompareEqui}} shows that commuting costs decrease as the number of SAV commuters increases.
It further shows that MC pricing maximizes the SAV share and is therefore most beneficial for commuters; however, it imposes financial deficits on the service provider.
When Condition~\eqref{Eq:Cond_NonUnique} holds, both AC pricing and unregulated monopoly pricing can reduce commuting costs compared with those in the no-SAV case, while keeping the provider's profit non-negative.
In particular, the number of SAV commuters at the high-adoption equilibrium under AC pricing, $N_{a2}^{AC*}$, exceeds the number at the unregulated monopoly equilibrium, $N_{a}^{m*}$.
Therefore, when the high-adoption equilibrium is reached, AC fare regulation achieves a larger reduction in commuting costs while maintaining the provider's financial balance.


Interestingly, the theorem suggests that, with an appropriate timing of AC pricing, the system can converge to the high-adoption equilibrium $N_{a2}^{AC*}$ from any initial state.
Based on the theorem and the stability analysis under AC pricing, we propose the following two-step pricing strategy for the case in which Condition~\eqref{Eq:Cond_NonUnique} holds strictly:
\begin{description}
\item[\textbf{Step 1}] \textit{No fare regulation.}
Keep the fare unregulated and allow monopoly pricing.

\item[\textbf{Step 2}] \textit{Introduce AC fare regulation.}
Once the traffic state reaches the monopoly equilibrium, implement AC pricing.
\end{description}

\noindent This strategy overcomes a drawback of AC pricing---its inability to promote SAV adoption from a low adoption level---by temporarily allowing unregulated monopoly pricing.
As scale economies lead to a natural monopoly, the service provider sets the fare at $p^{m*}$, and the number of SAV commuters converges to $N^{m*}_{a}$.
As shown in Eq.~\eqref{Eq:Compare_Number}, $N^{m*}_{a}$ exceeds $N_{a1}^{AC*}$, the unstable equilibrium under AC pricing.
Implementing AC pricing at this point therefore shifts the system toward the high-adoption equilibrium $N_{a2}^{AC*}$, further increasing the SAV adoption and reducing commuting cost.
Consequently, this pricing strategy, contingent on the diffusion stage, facilitates widespread SAV adoption in an economically viable manner.





\subsubsection{Effects of capacity expansion}

{\color{black}
We next examine how an increase in bottleneck capacity $\mu$ affects equilibrium commuting cost when both NVs and SAVs are used.
The sensitivity of the equilibrium cost $c^{*}$ with respect to $\mu$ can be obtained by differentiating Eq.~\eqref{Eq:DepartEquiCost-normal} as follows:
\begin{align}
\cfrac{\mathrm{d} c^{*}}{\mathrm{d} \mu} = \cfrac{\beta\gamma}{(\beta+\gamma)\mu}\left[-\cfrac{N-(1-\kappa)N_{a}}{\mu}- (1-\kappa)\cfrac{\mathrm{d} N_{a}}{\mathrm{d} \mu}\right].\label{Eq:CapacityExpansion}
\end{align}

\noindent The first term represents the direct effect of capacity expansion and is always negative: for a given mode share, expanding capacity reduces congestion and the equilibrium commuting cost. The second term captures an indirect effect: the change in the equilibrium commuting cost shifts commuters' mode choices, i.e., the equilibrium number of SAV commuters. This changes the aggregate capacity effect induced by SAVs, thereby further affecting the equilibrium commuting cost. 

Under MC pricing and unregulated monopoly pricing, capacity expansion always reduces the equilibrium commuting cost, even though it induces a mode shift from SAVs to NVs. Increasing capacity reduces congestion-related delays for both modes; however, because NV commuters have a higher VOT than SAV commuters, the resulting cost reduction is larger for NV commuters. This induces some commuters to switch from SAVs to NVs, implying $\mathrm{d}N_a/\mathrm{d}\mu<0$ and a positive indirect-effect term in Eq.~\eqref{Eq:CapacityExpansion}. Nevertheless, this indirect effect never outweighs the direct effect in both fare-setting scenarios, and thus the total derivative remains negative. This can be confirmed by substituting the closed-form solutions in Eqs.~\eqref{Eq:Nums_MCPricing} and \eqref{Eq:AutoNum_Monopoly} into Eq.~\eqref{Eq:CapacityExpansion}:
\begin{alignat}{3}
    \text{[MC pricing]}\quad
    &\cfrac{\mathrm{d} N^{MC*}_{a}}{\mathrm{d}\mu} = -\cfrac{B}{A \mu}
    \quad &\Rightarrow\quad
    &\cfrac{\mathrm{d} c^{MC*}}{\mathrm{d}\mu}
     = -\cfrac{\beta \gamma}{\beta + \gamma}\cfrac{\kappa N}{\mu^{2}} < 0.\\
    \text{[Monopoly pricing]}\quad
    &\cfrac{\mathrm{d} N^{m*}_{a}}{\mathrm{d}\mu} = -\cfrac{B}{2A \mu}
    \quad &\Rightarrow\quad
    &\cfrac{\mathrm{d} c^{m*}}{\mathrm{d}\mu}
     = -\cfrac{\beta \gamma}{\beta + \gamma}\cfrac{(1+\kappa) N}{2\mu^{2}} < 0.
\end{alignat}

By contrast, under AC pricing, the derivative in Eq.~\eqref{Eq:CapacityExpansion} can be positive: the \textit{Downs--Thomson paradox}, where capacity expansion increases commuting costs, can occur, particularly in the high-adoption equilibrium. Substituting the equilibrium number of SAV commuters under AC pricing into Eq.~\eqref{Eq:CapacityExpansion} yields the following theorem.

\begin{theorem}
Under AC pricing, suppose that Condition~\eqref{Eq:Cond_NonUnique} holds with strict inequality and consider the stable high-adoption equilibrium $N_{a2}^{AC*}$.
The equilibrium commuting cost increases as capacity increases when the following condition holds:
\begin{align}
 K < \left( AN-B-\cfrac{2F_{a}}{N} \right) \cfrac{1-\kappa}{1+\kappa}.\label{Eq:ParadoxCondition}
\end{align}
\end{theorem}

\begin{figure}[t]
	\centering
	\hspace{0mm}
    \includegraphics[width=0.8\linewidth,clip]{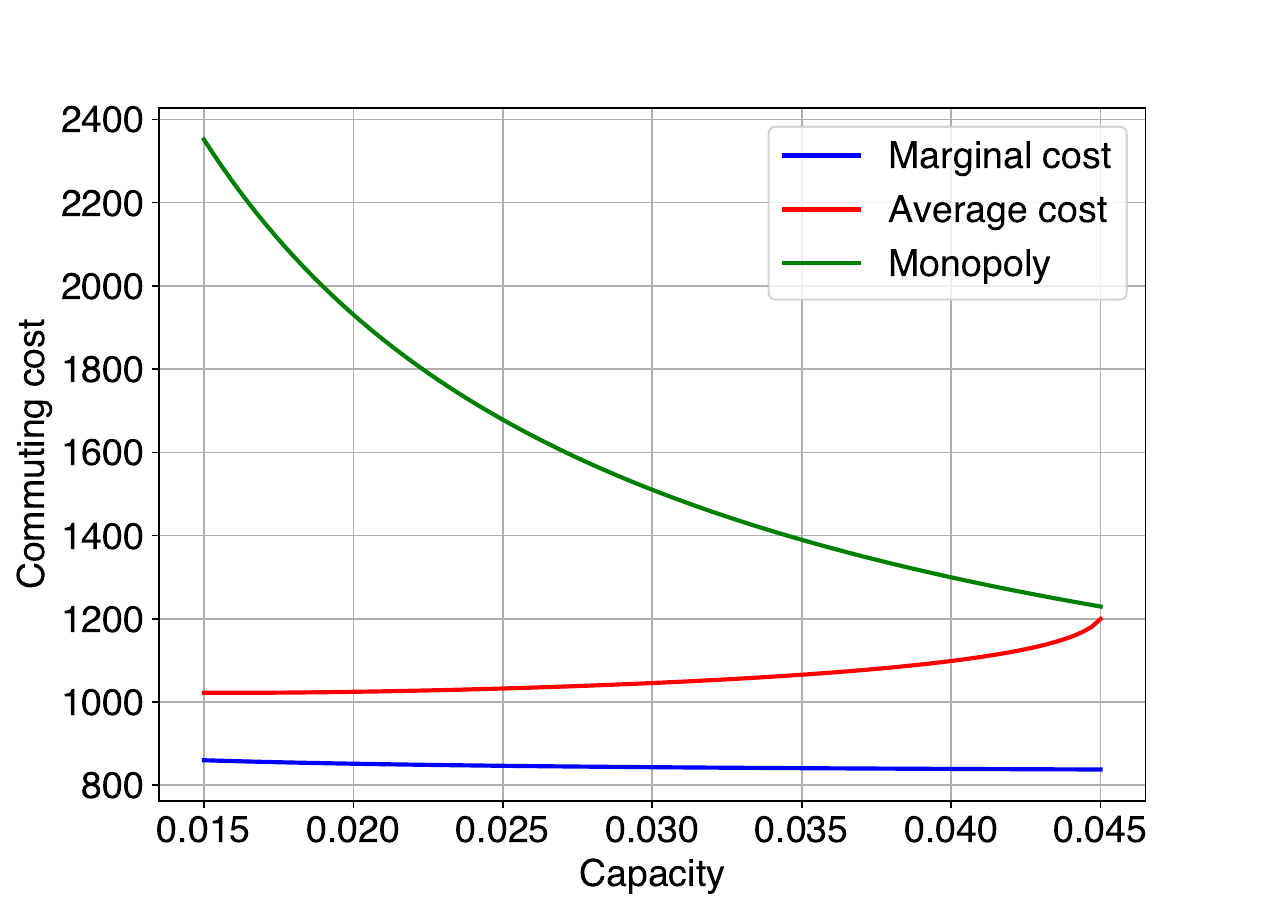}
    \vspace{-2mm}
    \caption{Downs--Thomson paradox under AC pricing. \textcolor{black}{$N=250$, $\kappa = 0.01$, $\theta = 0.7$, $\beta = 0.4$, $\gamma = 0.4$, $t_{f} = 10$, $F_{n} = 500$, $m=499$, $w = 100$, $F_{a} = 10500$}.}
    \vspace{-0mm}
    \label{Fig:Paradox}
\end{figure}

The paradox can arise because, in the high-adoption equilibrium under AC pricing, capacity expansion can trigger a positive feedback loop that amplifies the shift from SAVs to NVs. As under MC and monopoly pricing, capacity expansion makes NVs relatively more attractive, reducing $N_{a}$ and making the indirect term positive. Under AC pricing, this reduction in $N_{a}$ raises the fare $p=m+F_{a}/N_{a}$, which further discourages SAV use and reinforces the shift to NVs. As $N_{a}$ decreases, the aggregate capacity effect induced by SAVs weakens. Then, the indirect effect can outweigh the direct effect of capacity expansion, increasing the equilibrium commuting cost.
Figure~\ref{Fig:Paradox} confirms the occurrence of this paradox.

Eq.~\eqref{Eq:ParadoxCondition} indicates that the paradox is more likely to arise when (i) $\kappa$ is small and (ii) $K$ is small. When $\kappa$ is small, SAV commuters have a strong capacity advantage. In that case, a shift from SAV to NV can substantially worsen congestion because a decline in the SAV share causes a large reduction in the aggregate capacity effect induced by SAVs.

Second, when $K$ is small, the high-adoption equilibrium number of SAV commuters, $N_{a2}^{AC*}$, becomes highly sensitive to the bottleneck capacity $\mu$. This can be seen from the derivative with respect to $\mu$: 
\begin{align}
\cfrac{\mathrm{d} N_{a2}^{AC*}}{\mathrm{d}\mu} = - \cfrac{N_{a2}^{AC*} (N- N_{a2}^{AC*})}{\mu(N_{a2}^{AC*} - N_{a1}^{AC*})} = - \cfrac{AN_{a2}^{AC*} (N- N_{a2}^{AC*})}{\mu K} 
\end{align}
A smaller $K$ therefore tends to magnify $\left|\mathrm{d} N_{a2}^{AC*}/\mathrm{d} \mu\right|$,
strengthening the indirect effect.

To clarify this mechanism, we examine how much the equilibrium number of SAV commuters must adjust to restore $\Delta c\equiv c_{n}^{*} - c_{a}^{*}$ to zero after a capacity change creates a cost difference between the two modes. Differentiating $\Delta c \equiv c_{n}^{*} - c_{a}^{*}$ with respect to $N_a$ yields:
\begin{align}
\cfrac{\mathrm{d} \Delta c}{\mathrm{d} N_{a}} = \cfrac{F_{a}}{(N_{a})^{2}} - A.
\end{align}
This derivative decomposes the effect of $N_{a}$ on the mode-cost difference $\Delta c$ into two terms. The first term $F_{a}/(N_{a})^{2}$ captures the fare effect under AC pricing: as $N_{a} $ increases, the fare $p=m + F_{a}/N_{a}$ decreases, which reduces $c_{a}^{*}$ and thereby increases $\Delta c$. The second term $-A$ captures the congestion and schedule-delay effect: as $N_{a}$ increases (and $N_{n}$ decreases), NV commuters face smaller congestion and schedule-delay costs, which decreases $\Delta c$.
Importantly, the net effect of these two terms around the equilibrium is governed by $K$. Evaluating the derivative at the high-adoption equilibrium yields:
\begin{align}
\left. \cfrac{\mathrm{d} \Delta c}{\mathrm{d} N_{a}} \right|_{N_{a} = N_{a2}^{AC*}} = -\cfrac{K}{N_{a2}^{AC*}}
\end{align}
Hence, when $K$ is close to zero, the two effects nearly offset each other and $\Delta c$ becomes locally insensitive to changes in $N_{a}$. As a result, a capacity-induced change in relative costs requires a large adjustment in $N_{a}$ to restore $\Delta c = 0$, implying that $\left|\mathrm{d} N_{a2}^{AC*}/\mathrm{d} \mu\right|$ can be large. This strengthens the indirect effect and makes the paradox more likely.

}

These findings underscore the importance of integrating hard and soft transport policies to provide commuters with convenient and financially sustainable transport options.
Expanding capacity is not inherently inappropriate for improving traffic conditions under unregulated monopoly pricing; however, it may be ultimately ineffective if AC pricing is later implemented.
Infrastructure development should be planned with a long-term perspective that considers the potential direction of future transport policies.

\section{First-best policy}\label{Sec:FirstBest}
So far, we have examined how commuters' equilibrium costs vary across three fare-setting scenarios.
We now shift our attention to social costs that incorporate the service operator's profit, and to transport policies designed to achieve an efficient transportation system.

This section examines the first-best policy that minimizes the social cost.
Specifically, we assume that, in addition to fare regulation, a policymaker can impose time-varying congestion tolls that completely internalize congestion externalities.
The equilibrium condition is formulated under the first-best policy.
We then derive the equilibrium flow and cost patterns and analyze their properties.

\subsection{Formulation}
First, we define the first-best optimum, the outcome that the first-best policy aims to achieve, as the state that minimizes social cost.
Social cost is defined as the total commuting costs borne by commuters, net of the operator's profit.
Because queuing-delay costs are pure deadweight losses, the first-best optimum is obtained as the solution to the following linear programming:
\begin{align}
	\min_{\{n_{n}(t),n_{a}(t)\}_{t\in\mathbb{R}}}\quad &
	\begin{aligned}[t]
		SC(\{n_{n}(t),n_{a}(t)\}) :=\;&
		\int_{\mathbb{R}}\Big[
		n_{n}(t)\big(t_{f}+s(t)+F_{n}\big) \\
		&\qquad\qquad
		+\, n_{a}(t)\big(\theta t_{f}+s(t)+m  + w \big)
		\Big]\,\mathrm{d}t \;+\; F_{a}
	\end{aligned}
	\label{Eq:FirstBest_Object}\\
	\text{s.t.}\quad
	&\int_{\mathbb{R}}n_{n}(t)\,\mathrm{d}t + \int_{\mathbb{R}}n_{a}(t)\,\mathrm{d}t = N,
	\label{Eq:FirstBest_Const1}\\
	&n_{n}(t) + \kappa n_{a}(t)\leq \mu,\quad \forall t\in\mathbb{R},
	\label{Eq:FirstBest_Const2}\\
	&n_{n}(t)\geq 0,\quad n_{a}(t)\geq 0,\quad \forall t\in\mathbb{R}.
	\label{Eq:FirstBest_Const3}
\end{align}

\noindent The objective function represents the social cost when queuing is completely eliminated.
Constraint \eqref{Eq:FirstBest_Const1} is the flow conservation condition.
Constraint \eqref{Eq:FirstBest_Const2} is the capacity constraint.
Constraint \eqref{Eq:FirstBest_Const3} ensures non-negativity on the flow rates.

We then consider a first-best policy that achieves this outcome in equilibrium.
The policy consists of optimal fare regulation together with a time-varying congestion toll.
Let $\tau(t)$ denote the congestion toll paid by NV commuters who arrive at their destination at time $t$.
We assume that each SAV commuter pays $\kappa \tau(t)$, because an SAV imposes congestion externalities that are $\kappa$ times those generated by an NV commuter.\footnote{Charging differentiated congestion tolls by vehicle class is technically feasible and has already been implemented in practice.
An alternative is to establish, outside the transport system, a market for tradable access rights or credits that allows
commuters to pre-purchase bottleneck permits priced according to their vehicle classes.}
We further assume that the SAV fare is set according to MC pricing: $p=m$.
Under these assumptions, the commuting costs for NV and SAV commuters arriving at time $t$ are given by
\begin{align}
	&c_{n}^{FB}(t) = s(t) + t_{f} + F_{n} + \tau(t),\\
	&c_{a}^{FB}(t) = s(t) + \theta t_{f} + m  + w + \kappa \tau(t).
\end{align}
The equilibrium conditions under the first-best policy are expressed as follows:
\begin{align}
	&
	\begin{cases}
		c_{n}^{FB}(t) = c^{FB*}\quad &\text{if}\quad n_{n}(t) > 0\\
		c_{n}^{FB}(t) \geq c^{FB*} \quad &\text{if}\quad n_{n}(t) = 0
	\end{cases}\qquad \forall t\in\mathbb{R},\label{Eq:FirstBest_Optimality1}\\
	&
	\begin{cases}
		c_{a}^{FB}(t) = c^{FB*}\quad &\text{if}\quad n_{a}(t) > 0\\
		c_{a}^{FB}(t) \geq c^{FB*} \quad &\text{if}\quad n_{a}(t) = 0
	\end{cases}\qquad \forall t\in\mathbb{R},\label{Eq:FirstBest_Optimality2}\\
	&
	\begin{cases}
		n_{n}(t) + \kappa n_{a}(t) - \mu = 0\quad &\text{if}\quad \tau(t) > 0\\
		n_{n}(t) + \kappa n_{a}(t) - \mu \leq 0\quad &\text{if}\quad \tau(t) = 0
	\end{cases}\qquad \forall t\in\mathbb{R},\label{Eq:FirstBest_Optimality3}\\
	&
	\int_{t\in\mathbb{R}}n_{n}(t)\mathrm{d}t + \int_{t\in\mathbb{R}}n_{a}(t)\mathrm{d}t = N,\label{Eq:FirstBest_Optimality4}
\end{align}
where $c^{FB*}$ represents the equilibrium commuting cost under the first-best policy.
Complementarity conditions~\eqref{Eq:FirstBest_Optimality1} and \eqref{Eq:FirstBest_Optimality2} represent the equilibrium conditions for departure-time and mode choices.
Condition~\eqref{Eq:FirstBest_Optimality3} is the bottleneck capacity constraint, ensuring that the optimal toll eliminates the bottleneck queue.

Examining the optimality conditions of \eqref{Eq:FirstBest_Object}--\eqref{Eq:FirstBest_Const3} shows that the proposed policy, i.e., MC pricing combined with vehicle-specific congestion tolls, achieves the first-best outcome.
The equilibrium conditions coincide with the first-order conditions when the Lagrange multiplier for constraint~\eqref{Eq:FirstBest_Const1} is set to $c^{FB*}$ and the multiplier for constraint~\eqref{Eq:FirstBest_Const2} is set to $\tau(t)$.
Consequently, an equilibrium that satisfies the equilibrium conditions attains the first-best optimum, and the proposed policy is therefore first best.

\subsection{Characterization of the equilibrium under the first-best policy}
We here characterize the equilibrium under the first-best policy (hereinafter referred to as the first-best equilibrium) based on the equilibrium conditions~\eqref{Eq:FirstBest_Optimality1}--\eqref{Eq:FirstBest_Optimality4}.
By analyzing them, we first obtain the following lemma, which shows that the temporal-sorting property observed in the equilibrium (without congestion tolls) is preserved even in the first-best equilibrium:
\begin{lemm}
	In the first-best equilibrium, SAV commuters arrive closer to their desired arrival time than NV commuters do.
\end{lemm}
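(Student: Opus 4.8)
The plan is to work directly from the first-best equilibrium conditions \eqref{Eq:FirstBest_Optimality1}--\eqref{Eq:FirstBest_Optimality4} and to show that the congestion toll $\tau(t)$ sorts the two modes by arrival time, placing SAVs in a single central interval around $t_d=0$ and NVs in the outer intervals. The organizing idea is that, because an SAV pays only the fraction $\kappa\tau(t)$ of the toll, SAV commuters are the ones willing to tolerate the high-toll peak near the desired arrival time.

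First I would determine, for each arrival time $t$, which mode can be active. Subtracting \eqref{Eq:FirstBest_Optimality1} from \eqref{Eq:FirstBest_Optimality2} shows that the two commuting costs are ordered purely by the toll level: an NV is strictly cheaper when $(1-\kappa)\tau(t) < B$ and an SAV is strictly cheaper when $(1-\kappa)\tau(t) > B$, where $B$ is the mode-specific fixed-cost difference introduced earlier. Hence only NVs can arrive where $\tau(t) < B/(1-\kappa)$, only SAVs where $\tau(t) > B/(1-\kappa)$, and both modes can coexist only at the threshold value $\tau(t) = B/(1-\kappa)$. This pointwise comparison is routine.

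Second I would establish that $\tau(t)$ is continuous and single-peaked at $t_d=0$. On any interval where a single mode is active and capacity binds (so $\tau>0$), the relevant optimality condition gives $\tau'(t) = -s'(t)$ for NVs and $\tau'(t) = -s'(t)/\kappa$ for SAVs; in both cases $\tau'(t)$ inherits the sign of $-s'(t)$, which is $+\beta>0$ for $t<0$ and $-\gamma<0$ for $t>0$. Thus $\tau$ increases before $t_d$ and decreases after it, so it is unimodal with maximum at $t_d=0$, while at the rush-hour edges capacity is slack and $\tau=0$. Continuity follows because $s(t)$ is continuous and, on each mode's support, $\tau(t)$ equals $c^{FB*}$ minus $s(t)$ and a mode-specific constant, the two expressions agreeing exactly at the threshold $B/(1-\kappa)$. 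Combining the two facts finishes the argument: since $\tau$ rises monotonically from $0$ to its peak and back, and since $0 < B/(1-\kappa) < \tau_{\max}$ whenever both modes are used, the super-threshold set on which only SAVs arrive is a single interval containing $t_d=0$, and the NV arrivals lie strictly outside it, which is the claimed sorting. I would also remark that, unlike the earlier no-toll sorting driven by the VOT parameter $\theta$, this first-best sorting is driven by the capacity parameter $\kappa$, since $\kappa$ is what makes SAVs pay a smaller toll.

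The hard part will be the global, structural content of the second step rather than the pointwise algebra: ruling out that the two modes' supports interleave, making the continuity and unimodality of $\tau$ rigorous (including the endpoint behavior and the measure-zero coexistence set $\{t:\tau(t)=B/(1-\kappa)\}$), and confirming that the peak toll strictly exceeds $B/(1-\kappa)$ so that the central SAV interval is nonempty—this last requirement is precisely the first-best analog of the non-extreme condition in Assumption 1, i.e. that both modes are actually used. As an alternative route, since the proposed policy attains the linear-programming optimum \eqref{Eq:FirstBest_Object}--\eqref{Eq:FirstBest_Const3}, one could instead prove the sorting by an exchange argument on the optimal flows, moving the higher-throughput mode (SAVs, which consume only $\kappa$ units of capacity per commuter) toward $t_d$ to lower total schedule-delay cost; there the delicate point is arranging the swap so that the capacity constraint remains satisfied at both arrival times simultaneously.
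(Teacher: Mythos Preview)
Your first step---subtracting \eqref{Eq:FirstBest_Optimality1} and \eqref{Eq:FirstBest_Optimality2} to obtain the threshold $\tau(t)\lessgtr B/(1-\kappa)$---is exactly what the paper does. Where you diverge is in the second step: you propose to establish the global shape of $\tau$ (continuity, unimodality at $t_d=0$, endpoint behavior) and then read the sorting off the superlevel set $\{\tau>B/(1-\kappa)\}$. The paper takes a shorter path that sidesteps all of this. At any time at which a mode is active, the corresponding \emph{equality} in \eqref{Eq:FirstBest_Optimality1} or \eqref{Eq:FirstBest_Optimality2} expresses $s(t)$ as an affine function of $\tau(t)$; substituting the threshold bound on $\tau(t)$ from step~1 immediately yields a threshold on $s(t)$: $n_n(t)>0$ forces $s(t)\geq c^{FB*}-(t_f+F_n)-B/(1-\kappa)$, while $n_a(t)>0$ forces the strict reverse inequality. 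Because $s$ is known a priori to be V-shaped with its minimum at $t_d$, this \emph{is} the temporal sorting, with no global structural argument about $\tau$ required. The paper also opens with a short lemma disposing of the coexistence issue you flag as delicate: if both flows are positive at $\bar t$, the two equalities together force $s(\bar t)$ to equal a fixed constant, which can occur only on a set of Lebesgue measure zero. Your route would work and your identification of the hard parts is accurate, but the paper's rearrangement---translating the $\tau$-threshold into an $s$-threshold via the binding equality for the active mode---eliminates those hard parts entirely, and renders the alternative exchange argument you sketch unnecessary. Your closing remark that the first-best sorting is driven by $\kappa$ rather than $\theta$ is correct and matches the paper's intuition.
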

\begin{proof}
	See \Cref{Sec:App-TemporalSortingFB}.	
\end{proof}

This lemma shows that SAV commuters choose their departure times so that they arrive during periods of lower schedule costs, i.e., periods subject to higher congestion tolls.
Because the toll imposed on an SAV is lower than that on an NV, SAV commuters have a stronger incentive than NV commuters to select such periods.
From the perspective of optimal allocation, the underlying mechanism is the capacity effect of SAVs: because an SAV allows more commuters to pass through the bottleneck per unit time, assigning SAVs to periods with lower schedule costs reduces the social cost.

Based on this temporal-sorting property, the first-best equilibrium can be derived by considering the following two cases (See \Cref{Sec:App-ClosedFormSolution_FirstBest} for the derivation details):

\subsubsection*{Case (i) mixture of normal and shared autonomous vehicles}
Suppose that
\begin{align}
	N > \cfrac{B}{1-\kappa}\cfrac{(\beta + \gamma)\mu}{\beta \gamma}.\label{Eq:FirstBest_PopulationThres}
\end{align}
Then, the equilibrium number of SAV commuters and the equilibrium commuting cost are expressed as follows:
\begin{align}
	&N_{a}^{FB*} = N - \cfrac{(1-\theta)B}{(1-\kappa)A},\\
	&c^{FB*} = \cfrac{\kappa AN}{1-\theta} + B  + t_{f} + F_{n}.
\end{align}

\subsubsection*{Case (ii) normal vehicles only}
If Condition~\eqref{Eq:FirstBest_PopulationThres} does not hold, the equilibrium number of SAV commuters and the equilibrium commuting cost are given by
\begin{align}
	&N_{a}^{FB*} = 0,\\
	&c^{FB*} = \cfrac{AN}{1-\theta}+ t_{f} + F_{n}.
\end{align}

The results indicate that a flow pattern consisting exclusively of SAVs cannot arise under the first-best equilibrium.
The reason lies in the condition $B>0$, which represents that the marginal cost of using an SAV exceeds that of an NV.
Because queues are eliminated in the first-best equilibrium, commuters' mode choice depends only on differences in toll savings $(1-\kappa)\tau(t)$ and marginal costs $B$.
During periods with low tolls (e.g., immediately after the bottleneck becomes active or just before it becomes inactive), the higher marginal cost of using an SAV outweighs the toll savings.
As a result, commuters have an incentive to choose NVs.

\textcolor{black}{An SAV becomes optimal when the toll savings exceed the marginal cost disadvantage, i.e., when $\tau(t) > B/(1-\kappa)$. Because the optimal toll is the shadow price of the capacity constraint, its peak level increases with the population size $N$: when $N$ is larger, the toll around the desired arrival time must increase to keep the bottleneck queue-free. Condition (46) gives the minimum $N$ such that the peak toll exceeds $B/(1-\kappa)$; when it holds, Case (i) arises and SAVs are used in the peak-neighborhood.}


\begin{figure}[t]
	\centering
	\hspace{0mm}
    \includegraphics[width=0.8\linewidth,clip]{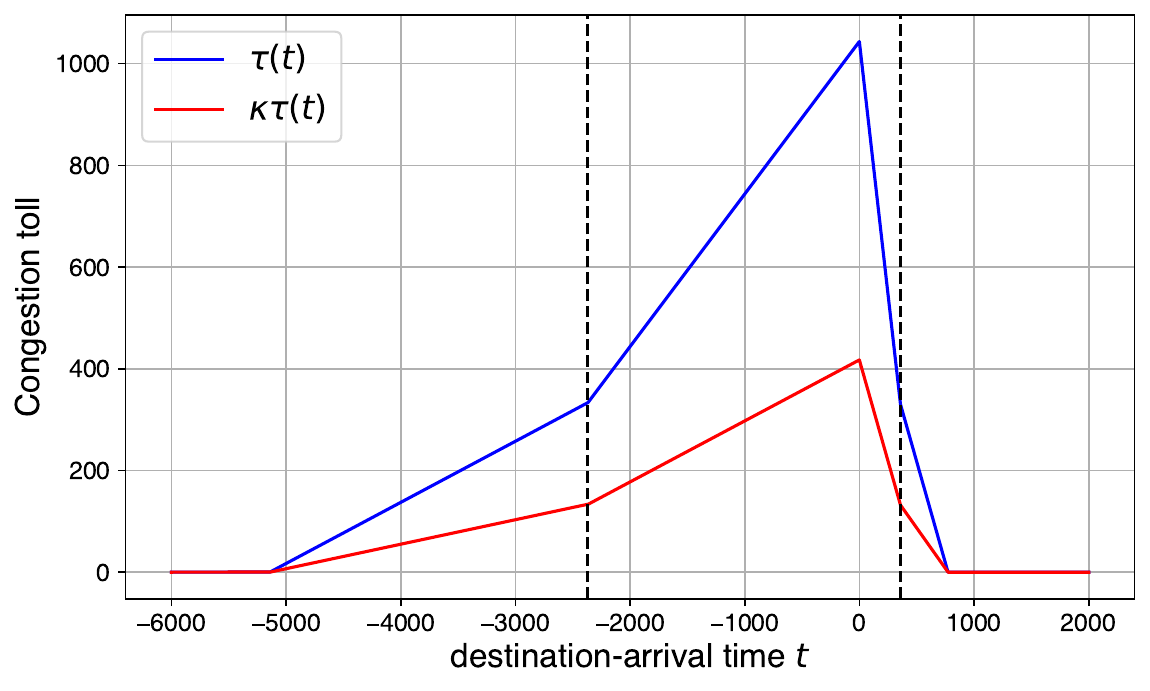}
    \vspace{-2mm}
    \caption{Time-varying congestion tolls in the first-best equilibrium. \textcolor{black}{$N=10000$, $\theta = 0.5$, $\beta = 0.3$, $\gamma = 2.0$, $\mu = 1$, $t_{f} = 2$, $F_{n} = 500$, $m=800$, $w = 200$.}}
    \vspace{-0mm}
    \label{Fig:CumulativeFB}
\end{figure}

\Cref{Fig:CumulativeFB} illustrates the time-varying congestion tolls $\tau(t)$ for NV commuters and $\kappa\tau(t)$ for SAV commuters.
In the figure, the tolls peak around the desired arrival time because SAVs are used during this period. 
An SAV commuter pays $\kappa \tau(t)$, which is $\kappa$ times the toll paid by an NV commuter and therefore lower.
Taking this into account, the policymaker sets $\tau(t)$ sufficiently high to prevent queues from forming among SAV commuters.
In addition, a comparison of $\tau(t)$ and $\kappa \tau(t)$ shows that their gap is large when SAVs are used and narrows as the destination arrival time moves away from the desired arrival time. 
Consequently, in periods where this gap is small, commuters have an incentive to choose NVs, whose marginal cost is lower than that of SAVs.

\subsection{Pareto-improvement condition}
Using the closed-form solution, we examine whether the first-best policy achieves a Pareto improvement, i.e., whether it reduces the social cost without increasing any commuter's cost.
To identify a sufficient condition, we compare the commuting cost $c^{FB*}$ in the first-best equilibrium with that under MC pricing $c^{MC*}$, which is the lowest commuting cost among the three toll-free fare-setting scenarios.


For this analysis, the following capacity-VOT index $\eta$, which summarizes the relative strength of capacity and VOT effects, plays a central role: 
\begin{align}
	\eta := \cfrac{1-\kappa}{1-\theta} > 0,\qquad (\because 0 < \kappa<1,0 < \theta<1)
\end{align}
Its value is determined by the relative magnitudes of $\kappa$ and $\theta$:
\begin{align}
	\eta =
	\begin{cases}
		1 & (\kappa=\theta),\\
		>1 & (\kappa<\theta),\\
		<1 & (\kappa>\theta).
	\end{cases}
\end{align}
A lower $\kappa$ indicates a stronger capacity effect, while a lower $\theta$ indicates a larger VOT effect; hence, $\eta > 1$ implies that the capacity effect exceeds the VOT effect, while $\eta < 1$ implies the opposite.
Intuitively, a higher $\eta$ indicates stronger congestion mitigation from capacity expansion, making the widespread adoption of SAVs socially beneficial.

Using the index $\eta$, we can obtain the condition under which a Pareto improvement is attainable.
Let $c^{FB*}_{1}$ and $c^{FB*}_{2}$ denote the first-best equilibrium commuting cost in Cases (i) and (ii), respectively.
We then have the following proposition: 
\begin{prop}
	If $\eta \geq 1$, the first-best equilibrium necessarily consists of both NV and SAV commuters; that is, case (i) arises.
	Moreover, the first-best policy minimizes the social cost without increasing the individual commuting cost; that is, the following relationship holds: 
	\begin{align}
		c^{FB*}_{1} \leq c^{MC*}<c_{2}^{AC*}\leq c^{m*}<c_{0}^{AC*}.
	\end{align}
	Therefore, a Pareto improvement is achieved from the equilibrium that arises under any fare-setting scenario.
	If $\eta < 1$, the first-best policy minimizes the social cost, but it increases the commuting costs, at least relative to the equilibrium under MC pricing.	
\end{prop}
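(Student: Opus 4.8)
The plan is to collapse every inequality in the statement into a comparison of two closed-form commuting costs, with the single scalar $B(1-\eta)$ doing essentially all the work. The one genuine computation is to write $c^{MC*}$ explicitly. Since both modes are used under MC pricing, every active commuter faces the common cost $c_n^*=c_a^*=c^{MC*}$; substituting $N_n^{MC*}=B/A$ and $N_a^{MC*}=(AN-B)/A$ into \eqref{Eq:DepartEquiCost-normal} and using $A=\beta\gamma(1-\theta)/((\beta+\gamma)\mu)$ to replace the prefactor $\beta\gamma/((\beta+\gamma)\mu)$ by $A/(1-\theta)$, I obtain $c^{MC*}=\frac{(1-\kappa)B+\kappa AN}{1-\theta}+t_f+F_n$. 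Everything downstream is bookkeeping against this formula.

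For the case $\eta\ge 1$, I first confirm that Case (i) is the relevant branch. The same substitution turns the threshold \eqref{Eq:FirstBest_PopulationThres} into $\eta>B/(AN)$; since Assumption~1 gives $B<AN$, we have $B/(AN)<1\le\eta$, so the threshold holds and the mixed equilibrium arises (equivalently, $N_a^{FB*}=N-B/(\eta A)>0$). Next I subtract the two closed forms: a direct computation gives the clean identity $c^{FB*}_1-c^{MC*}=B(1-\eta)$. Because $B>0$, this is nonpositive precisely when $\eta\ge 1$, yielding $c^{FB*}_1\le c^{MC*}$. Appending the cost ordering \eqref{Eq:Compare_Cost} from \textbf{Theorem~\ref{Theo:CompareEqui}} (which already supplies $c^{MC*}<c_2^{AC*}\le c^{m*}<c_0^{AC*}$) completes the displayed chain. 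The Pareto claim then follows structurally: at every fare-setting equilibrium all active commuters share that scenario's common cost, while in the first-best equilibrium they all share $c^{FB*}_1$; since $c^{FB*}_1$ is no larger than each of those common costs, no commuter is made worse off (and each is strictly better off relative to AC and monopoly pricing).

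For the case $\eta<1$, I argue that the first-best cost strictly exceeds $c^{MC*}$ regardless of which branch applies, since the branch is no longer fixed. If \eqref{Eq:FirstBest_PopulationThres} holds (Case (i)), the same identity gives $c^{FB*}_1-c^{MC*}=B(1-\eta)>0$. If it fails (Case (ii)), I instead subtract from $c^{MC*}$ the Case (ii) cost $c^{FB*}_2=AN/(1-\theta)+t_f+F_n$ and simplify to $c^{FB*}_2-c^{MC*}=(1-\kappa)(AN-B)/(1-\theta)$, which is positive by Assumption~1 ($AN-B>0$) together with $0<\kappa<1$ and $0<\theta<1$. Hence in both branches the first-best equilibrium raises the individual commuting cost above its MC-pricing value, establishing the final sentence.

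The only delicate point is organizational rather than analytical: one must keep straight that the governing comparison flips sign exactly through $B(1-\eta)$, and that for $\eta<1$ both Case (i) and Case (ii) must be handled separately because the population threshold is no longer automatically met. I expect no real obstacle beyond carefully carrying the factor $1/(1-\theta)$ through the substitutions; once $c^{MC*}$ is in closed form, every comparison reduces to a one-line sign check against $\eta=1$ or against Assumption~1.
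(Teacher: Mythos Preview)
Your proposal is correct and follows essentially the same route as the paper: both arguments rewrite condition~\eqref{Eq:FirstBest_PopulationThres} as $\eta A N > B$, reduce the key comparison to the identity $c^{FB*}_{1}-c^{MC*}=B(1-\eta)$, and handle the $\eta<1$ case by separately checking Case~(i) and Case~(ii) (your $(1-\kappa)(AN-B)/(1-\theta)$ is exactly the paper's $\eta(AN-B)$). Your explicit appeal to \textbf{Theorem~\ref{Theo:CompareEqui}} for the remaining chain and the spelled-out Pareto reasoning are minor elaborations, not a different strategy.
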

\begin{proof}
	See \Cref{Sec:App-ParetoImprovement}.
\end{proof}

\noindent Thus, when the capacity effect dominates the VOT effect, the social cost can be reduced while reducing the commuters' costs.

The key mechanism lies in the gap between the equilibrium numbers of SAV commuters under the first-best policy and MC pricing.
Let us consider Case (i).
By comparing $c^{FB*}_{1}$ and $c^{MC*}$, we obtain the following equation:
\begin{align}
	c^{MC*} - c^{FB*}_{1} = \eta A (N_{a}^{FB*} - N_{a}^{MC*}).
\end{align}
This equation indicates that whether $c^{FB*}_{1}$ is smaller than $c^{MC*}$ depends on whether the number of SAV commuters in the first-best equilibrium exceeds that in the equilibrium under MC pricing. 
Moreover, the difference in the number of SAV commuters between these two equilibria satisfies the following relationship:
\begin{align}
	N_{a}^{FB*} - N_{a}^{MC*}
	= \int_{t_{a}^{-}}^{t_{a}^{+}}n_{a}(t)\mathrm{d}t - N_{a}^{MC*}
	= \cfrac{B}{A}\left(1 - \cfrac{1}{\eta}\right)
	= 
	\begin{cases}
			>0	\quad & \text{if}\quad \eta > 1\\
			0	\quad & \text{if}\quad \eta = 1\\
			<0	\quad & \text{if}\quad \eta < 1
		\end{cases}
\end{align}
This relationship shows that when the capacity effect dominates ($\eta > 1$), $N_{a}^{FB*}$ exceeds $N_{a}^{MC*}$.
It follows that the first-best policy requires an additional increase in the number of SAV commuters. 
As the SAV share increases, the total capacity effect grows.
This reduces commuters' schedule costs and hence their commuting costs.

By contrast, when $\eta < 1$, the first-best policy reduces the number of SAV commuters.
Consequently, the total capacity effect in the first-best equilibrium decreases, and schedule costs may increase.
A similar phenomenon arises in Case (ii), where SAVs are not used: the first-best policy eliminates SAV usage, which increases commuting costs.
These results suggest that congestion tolls should be introduced, taking into account the strength of the capacity effect, i.e., the technological maturity of SAVs.

\subsection{Self-financing principle}
We demonstrated that the first-best equilibrium can be achieved by combining MC pricing with a time-varying congestion toll. 
For a congestion-pricing scheme to be socially acceptable, it is desirable to redistribute the toll revenue to commuters. 
As a benchmark form of redistribution, we consider using the revenue to finance the expansion of bottleneck capacity.
We then investigate whether the self-financing principle~\citep[][]{Mohring1962-wz} holds, namely, whether the revenue generated by the optimal congestion toll covers the cost of the optimal capacity level.

Let $K(\mu)$ denote the investment cost required to provide a bottleneck capacity of $\mu$.
We assume that this cost function is homogeneous of degree one, i.e.,
\begin{align}
	K(\mu) = \cfrac{\mathrm{d} K(\mu)}{\mathrm{d} \mu}\mu.
\end{align}

\noindent Adding this investment cost, we consider the following optimization problem in which a policymaker chooses the capacity $\mu$ and the flow profiles $\{n_{n}(t), n_{a}(t)\}_{t\in\mathbb{R}}$ simultaneously to minimize the social cost:
\begin{align}
	\min_{\{n_{n}(t),n_{a}(t)\},\mu}\quad &SC(\{n_{n}(t),n_{a}(t)\}) + K(\mu)\\
	&\text{s.t.}\quad 
	\eqref{Eq:FirstBest_Const1},
	\eqref{Eq:FirstBest_Const2},
	\eqref{Eq:FirstBest_Const3},
	\ \text{and}\ \mu\geq 0.
\end{align}

\noindent The optimality conditions consist of \eqref{Eq:FirstBest_Optimality1}--\eqref{Eq:FirstBest_Optimality4} and the complementarity condition for capacity:
\begin{align}
	\begin{cases}
		 \cfrac{\mathrm{d} K(\mu)}{\mathrm{d} \mu}  - \int_{t\in\mathbb{R}}\tau(t)\mathrm{d}t= 0\quad &\text{if}\quad \mu > 0,\\
		 \cfrac{\mathrm{d} K(\mu)}{\mathrm{d} \mu}  - \int_{t\in\mathbb{R}}\tau(t)\mathrm{d}t\geq 0\quad &\text{if}\quad \mu = 0.
	\end{cases}
\end{align}
Because $\mu=0$ prevents any commuter from reaching the destination, this case is clearly infeasible.
Formally, one can impose a sufficiently large penalty cost on $\mu=0$; then, the optimality condition reduces to the following identity:
\begin{align}
	K(\mu) =\mu \int_{t\in\mathbb{R}}\tau(t)\mathrm{d}t.
\end{align}

This optimality condition implies that the self-financing principle holds in the first-best equilibrium.
The left-hand side is the investment cost required to provide the optimal capacity $\mu$.
The right-hand side represents the total toll revenue collected in equilibrium under optimal congestion pricing. 
When NV commuters flow, each commuter whose destination arrival time is $t$ pays the time-varying toll $\tau(t)$, and the flow rate is $\mu$; 
When SAV commuters flow, each commuter pays $\kappa \tau(t)$, but the flow rate is $\mu/\kappa$.
Therefore, the instantaneous revenue at $t$ is both $\mu\tau(t)$ and integrating the revenue over time yields the right-hand side.
We thus derive the following proposition:
\begin{prop}
	If the investment-cost function $K(\mu)$ is homogeneous of degree one, the optimal investment cost equals the total revenue generated by the optimal congestion toll.	
\end{prop}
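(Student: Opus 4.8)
The plan is to treat the capacity-augmented program as a standard optimization whose only new decision variable is the scalar $\mu$, read off the stationarity condition in $\mu$ from its Lagrangian, combine that condition with the degree-one homogeneity of $K$, and then close with a short flow-accounting identity for the toll revenue. First I would form the Lagrangian of the augmented problem, attaching a nonnegative multiplier $\tau(t)$ to each capacity constraint $n_{n}(t)+\kappa n_{a}(t)\le \mu$ in \eqref{Eq:FirstBest_Const2} and a multiplier $c^{FB*}$ to the flow-conservation constraint \eqref{Eq:FirstBest_Const1}. The first-best formulation already established that, with exactly these multipliers, stationarity in $\{n_{n}(t),n_{a}(t)\}$ reproduces the equilibrium conditions \eqref{Eq:FirstBest_Optimality1}--\eqref{Eq:FirstBest_Optimality4}; hence $\tau(t)$ is precisely the optimal time-varying toll. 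Since $\mu$ enters the Lagrangian only through $K(\mu)$ and through the $-\mu$ appearing in each capacity constraint, differentiating in $\mu$ yields $K'(\mu)-\int_{t\in\mathbb{R}}\tau(t)\,\mathrm{d}t$, which gives the complementarity condition displayed above. Because $\mu=0$ is infeasible (no commuter can reach the CBD), only the interior branch is relevant, so $dK/d\mu=\int_{t\in\mathbb{R}}\tau(t)\,\mathrm{d}t$ must hold.

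Next I would invoke the homogeneity assumption $K(\mu)=(dK/d\mu)\,\mu$ and substitute the stationarity condition to obtain at once $K(\mu)=\mu\int_{t\in\mathbb{R}}\tau(t)\,\mathrm{d}t$, the identity anticipated in the text. It then remains to identify the right-hand side with total toll revenue, and this turns on the capacity effect through a clean $\kappa/\kappa$ cancellation. On any interval in which NVs cross the active bottleneck, the departure-flow rate equals $\mu$ and each commuter pays $\tau(t)$, so the instantaneous revenue is $\mu\tau(t)$; on any interval in which SAVs cross, the effective-rate condition \eqref{Eq:FirstBest_Optimality3} forces a physical flow rate of $\mu/\kappa$ while each SAV commuter pays only $\kappa\tau(t)$, so the instantaneous revenue is again $(\mu/\kappa)(\kappa\tau(t))=\mu\tau(t)$. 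On uncongested intervals $\tau(t)=0$ contributes nothing. Integrating over $t$ shows the total revenue equals $\mu\int_{t\in\mathbb{R}}\tau(t)\,\mathrm{d}t$, which equals $K(\mu)$ by the previous step, completing the argument.

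I expect the main obstacle to be the rigorous justification of the $\mu$-stationarity condition rather than any of the algebra. Because the capacity constraint holds for a continuum of times $t$, the multiplier $\tau(\cdot)$ is a density (or measure) rather than a finite vector, so differentiating the Lagrangian in $\mu$ is really an envelope/sensitivity statement for an infinite-dimensional program. I would make this precise by invoking linear-programming duality in the flow variables for each fixed $\mu$, so that $-\int_{t\in\mathbb{R}}\tau(t)\,\mathrm{d}t$ is the shadow value of a uniform relaxation of all the capacity constraints, and then differentiating the resulting value function in the scalar $\mu$. By contrast, the homogeneity substitution and the revenue-accounting step are routine once the cancellation of the two $\kappa$ factors is noticed.
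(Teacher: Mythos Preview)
Your proposal is correct and follows essentially the same approach as the paper: form the Lagrangian, take the stationarity condition in $\mu$, dismiss $\mu=0$ as infeasible, apply degree-one homogeneity, and identify $\mu\int\tau(t)\,\mathrm{d}t$ with total toll revenue via the $\kappa/\kappa$ cancellation. The paper simply writes down the complementarity condition in $\mu$ without dwelling on the infinite-dimensional rigor you flag; your extra paragraph on LP duality and the envelope/shadow-value interpretation is a welcome refinement but not a departure.
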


\noindent This proposition confirms that no public subsidy is needed to finance capacity expansion, because toll revenue is sufficient to cover the investment cost.

\section{Second-best policy}\label{Sec:SecondBest}
In the preceding section, we examined a setting in which the policymaker could eliminate queuing congestion via optimal congestion pricing. 
However, congestion pricing is not always socially acceptable, so departure times may remain uncontrolled.

This section examines the second-best policy when congestion externalities cannot be internalized.
We assume that the policymaker can regulate only the SAV fare and chooses it to minimize the social cost in equilibrium, subject to the departure-time and mode choice conditions.
We first derive the second-best fare and the corresponding number of SAV commuters.
We then compare these results with those under the three fare-setting scenarios examined earlier and identify which scenario yields the lowest social cost under which conditions.

\subsection{Formulation}
We first define the second-best optimum, the outcome that the second-best policy aims to achieve.
Under the equilibrium without a congestion toll, commuters experience the mode-specific commuting costs $c_{n}^{*}(N_{n}, N_{a})$, $c_{a}^{*}(N_{n}, N_{a})$ under the departure-time choice equilibrium according to their mode choices.
Hence, the total commuting cost can be written as follows:
\begin{align}
	\int_{ t\in \mathbb{R}}c_{n}(t)n_{n}(t)\mathrm{d}t + \int_{t\in \mathbb{R}}c_{a}(t)n_{a}(t)\mathrm{d}t = N_{n}c_{n}^{*}(N_{n},N_{a}) + N_{a}c_{a}^{*}(N_{n},N_{a}).
\end{align}
Combining this expression with the operator's profit~\eqref{Eq:Profit}, the number of SAV commuters achieving the second-best optimum, $N^{SB*}$, is obtained as the solution to the following optimization problem:
\begin{align}
	\min_{N_{a}}\quad &SC^{SB}(N_{a}):=N_{n}c_{n}^{*}(N_{n},N_{a}) + N_{a}c_{a}^{*}(N_{n},N_{a}) - \pi\label{Eq:Second-Best}\\
	\text{s.t.}\quad 
	&0\leq N_{a} \leq N.\label{Eq:Second-Best_FlowConservation}
\end{align}
Condition~\eqref{Eq:Second-Best_FlowConservation} ensures that the number of NV commuters becomes non-negative.

The second-best fare $p^{SB*}$ is chosen to achieve the optimal number of SAV commuters.
It is obtained from the inverse demand function, which is derived by solving \eqref{Eq:DemandFunction} for $p$:
\begin{align}
	p(N_{a}) = m + AN - B - AN_{a}.\label{Eq:InvDemandFunction}
\end{align}
This inverse demand function gives the fare $p$ that satisfies the mode-choice indifference condition $c_{n}^{*} = c_{a}^{*}$ for a given $N_a$.
Substituting $N_a=N_a^{SB*}$ into \eqref{Eq:InvDemandFunction} yields $p^{SB*}$.\footnote{If the number of SAV commuters lies on a boundary (i.e., $N_{a} = 0$ or $N_{a} = N$), the fare can be higher or lower than Eq.~\eqref{Eq:InvDemandFunction} because cost equality is no longer required, as shown in \eqref{Eq:IntegratedEqui-Cond1}.
To avoid unnecessary complications, we nevertheless use the price given by \eqref{Eq:InvDemandFunction} in all cases.}

\subsection{SAV adoption and fare under the second-best policy}\label{Sec:SecondBest_Closed-form}
The number of SAV commuters in the equilibrium under the second-best policy (hereinafter, referred to as the second-best equilibrium) can be obtained from the first-order condition of the objective function.
The derivative with respect to $N_{a}$ is 
\begin{align}
	\cfrac{\partial SC^{SB}(N_{a})}{\partial N_{a}}
	=- \cfrac{\beta \gamma}{\mu(\beta + \gamma)}[(1-\kappa) + (1-\theta)]N + \cfrac{2\beta \gamma}{\mu(\beta + \gamma)}(1-\theta)N_{a} + B.\label{Eq:DerivativeSecondBestObj}
\end{align}
Setting \eqref{Eq:DerivativeSecondBestObj} to zero yields $N_a^{SB*}$ and the corresponding fare $p^{SB*}$, as stated in the following proposition:
\begin{prop}
	In the second-best equilibrium, the number of SAV commuters and the corresponding fare are
	\begin{align}
		&N_{a}^{SB*} =\cfrac{AN(1+\eta) - B}{2A},\label{Eq:Second-best-SAVNum}\\
		&p^{SB*} = m + \cfrac{(1-\eta)AN - B}{2}.\label{Eq:Second-best-fare}
	\end{align}
	If $N_{a}^{SB*}$ is below $0$ or above $N$, the number of SAV commuters is given by the corresponding boundary value, $0$ or $N$.
\end{prop}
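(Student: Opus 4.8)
The plan is to treat the second-best problem \eqref{Eq:Second-Best}--\eqref{Eq:Second-Best_FlowConservation} as the minimization of a one-dimensional quadratic over the compact interval $[0,N]$, so that the optimal $N_a$ is pinned down by the first-order condition when it is interior and by the nearest endpoint otherwise. The objective is indeed quadratic in $N_a$: substituting $N_n = N - N_a$ into the closed forms \eqref{Eq:DepartEquiCost-normal}--\eqref{Eq:DepartEquiCost-autonomous}, and observing that the fare $p$ enters the total commuting cost (through the $p$ in $c_a^*$) and the profit $\pi$ only as a transfer, so that the $pN_a$ terms cancel and leave $mN_a$, the objective reduces to a function of $N_a$ alone whose derivative is \eqref{Eq:DerivativeSecondBestObj}. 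This cancellation is what makes the reduction to a single state variable legitimate.

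First I would establish strict convexity. Differentiating \eqref{Eq:DerivativeSecondBestObj} once more gives the constant second derivative $\tfrac{2\beta\gamma(1-\theta)}{\mu(\beta+\gamma)} = 2A > 0$, where positivity follows from $\beta,\gamma,\mu>0$ and $\theta<1$. Hence $SC^{SB}$ is strictly convex, its unconstrained stationary point is its unique global minimizer, and over $[0,N]$ the constrained minimizer is that stationary point when it lies in the interior and the nearer boundary point when it does not. This directly justifies the stated projection rule that sends $N_a^{SB*}$ to $0$ or $N$ when it falls outside $[0,N]$.

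Next I would solve $\partial SC^{SB}/\partial N_a = 0$. The one step requiring care is the algebraic collapse into the index $\eta$: using the identity $\tfrac{\beta\gamma}{\mu(\beta+\gamma)} = \tfrac{A}{1-\theta}$, the coefficient $\tfrac{\beta\gamma}{\mu(\beta+\gamma)}\bigl[(1-\kappa)+(1-\theta)\bigr]$ becomes $A\bigl(\tfrac{1-\kappa}{1-\theta}+1\bigr) = A(1+\eta)$, while the slope term $\tfrac{2\beta\gamma(1-\theta)}{\mu(\beta+\gamma)}$ becomes $2A$. The first-order condition then reads $2AN_a = A(1+\eta)N - B$, which yields $N_a^{SB*} = \tfrac{AN(1+\eta)-B}{2A}$ as in \eqref{Eq:Second-best-SAVNum}.

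Finally I would recover the fare by substituting $N_a^{SB*}$ into the inverse demand function \eqref{Eq:InvDemandFunction}, namely $p(N_a) = m + AN - B - AN_a$; routine cancellation of the $AN_a$ term against $\tfrac{AN(1+\eta)-B}{2}$ produces $p^{SB*} = m + \tfrac{(1-\eta)AN - B}{2}$ as in \eqref{Eq:Second-best-fare}. I do not expect a deep obstacle: the proof is essentially a convex quadratic minimization, and the only things demanding attention are the bookkeeping of the $A$, $B$, $\eta$ substitutions so the $\eta$-combination emerges cleanly, and the convexity check that simultaneously certifies the stationary point as the minimizer and validates the boundary-projection statement.
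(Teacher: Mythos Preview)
Your proposal is correct and follows essentially the same approach as the paper: the paper computes the derivative \eqref{Eq:DerivativeSecondBestObj}, sets it to zero, and reads off the formulas, with the fare obtained from the inverse demand \eqref{Eq:InvDemandFunction}. Your explicit convexity check ($\partial^2 SC^{SB}/\partial N_a^2 = 2A > 0$) and the observation that the $pN_a$ terms cancel between cost and profit are a bit more careful than the paper's terse derivation, but the route is identical.
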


The proposition implies that $N_a^{SB*}$ and $p^{SB*}$ depend on the capacity-VOT index $\eta$.
To see how the second-best outcomes differ from the equilibria under the three fare-setting scenarios, we compare $N_a^{SB*}$ with the corresponding equilibrium values of $N_{a}$ under each scenario.
This reveals that the ordering of these values depends on whether $\eta \geq 1$ or not.
Specifically, 
\begin{itemize}
	\item $\eta \geq 1$ $(\kappa \leq \theta)$: The equilibrium values satisfies the following ordering:
		\begin{align}
			N_{a}^{SB*} > N_{a}^{MC*} > N_{a2}^{AC*}\geq N_{a}^{m*}>N_{a0}^{AC*}=0.
		\end{align} 
		
	\item $\eta < 1$: The ordering is not predetermined; it depends on the underlying parameters.
\end{itemize}

\noindent This implies that, unlike the comparison of equilibrium commuting costs, MC pricing is not always the best policy for reducing the social cost.
When $\eta \geq 1$, we have $1-\eta \leq 0$. 
Eq.~\eqref{Eq:Second-best-fare} shows that $p^{SB*}$ is below the marginal cost $m$, and $N_{a}^{SB*}$ exceeds $N_{a}^{MC*}$.
In this case, MC pricing promotes SAV adoption toward the second-best level and yields a lower social cost than the other fare-setting scenarios.
By contrast, when $\eta < 1$, we obtain $1-\eta > 0$; hence, $p^{SB*}$ may exceed $m$.
This implies that MC pricing may induce excessive SAV adoption which leads to a higher social cost than that under the other fare-setting scenarios.

%
%
%
%

To clarify the reason why the SAV adoption becomes excessive, we rewrite the derivative of the social-cost function in terms of commuters' costs and the operator's profit under the equilibrium:
\begin{align}
	\cfrac{\partial SC^{SB}(N_{a}^{*})}{\partial N_{a}^{*}}
	= 
	\underbrace{- \cfrac{(1-\kappa)\beta \gamma}{\mu(\beta + \gamma)}}_{\partial c_{n}^{*}/\partial N_{a}^{*}}N_{n}^{*} 
	+ \underbrace{\left[\cfrac{(\kappa-\theta)\beta \gamma}{\mu(\beta + \gamma)} + \cfrac{\partial p(N_{a}^{*})}{\partial N_{a}^{*}}\right]}_{\partial c_{a}^{*}/\partial N_{a}^{*}}N_{a}^{*}
	- \underbrace{\left[(p(N_{a}^{*})-m) + \cfrac{\partial p(N_{a}^{*})}{\partial N_{a}^{*}}N_{a}^{*}\right]}_{\partial \pi/\partial N_{a}^{*}}.
\end{align}

\noindent The first term is the marginal change in commuting cost incurred by NV commuters.
The second and third terms describe the change in the total cost of SAV commuters. 
The second term captures how additional SAV commuters affect the congestion externalities they face, whereas the third term captures the change in the fare they pay, given that the mode choice equilibrium is preserved.
The fourth and fifth terms describe the change in the operator's profit. 
The third and fifth terms offset each other because both represent income transfers effected through the fare.

This equation reveals the following mechanism: when $\eta < 1$, adding more SAV commuters increases the congestion externalities they themselves face; consequently, if SAV adoption exceeds its efficient level, the social cost can rise.
The second term shows that the direction of the congestion-externality change for SAV commuters depends on the sign of $\kappa - \theta$.
When $\kappa > \theta$ (equivalently $\eta < 1$), this term becomes positive, indicating that an increase in SAV commuters raises their total commuting cost: because the capacity effect is too weak to offset the additional congestion, the extra SAV flow ultimately raises the total commuting cost.
As $N_{a}$ becomes large, the additional commuting cost incurred by the expanding SAV cohort can outweigh the cost savings achieved by NV commuters, thereby increasing the social cost.\footnote{This analytical finding is consistent with the results of \cite{Van_den_Berg2016-mv}.
In that study, the VOT effect is likewise denoted by $\theta$, while the capacity effect is expressed as the function $r[f]$ of the SAV market share $f$. 
Setting $\partial r[f]/\partial f = 0$ makes their specification equivalent to the constant capacity factor $\kappa$ used here.
Then, the authors state:
\begin{quote}
	When $\theta > r[f] + f\cdot \partial r/\partial f$, raising $f$ decreases the travel cost in an autonomous car. The capacity effect ..., making an increased share more beneficial for all users. 
	A smaller $\theta$ means that ... this user imposes longer travel times on those who arrive closer to $t^{*}$. 
	Hence, a smaller $\theta$ strengthens the heterogeneity effect, making increasing $f$ less beneficial for current autonomous car users.
\end{quote}
Our result is consistent with this mechanism.}

It is worth emphasizing that the additional externalities created when commuters shift from NVs to SAVs are converted into a reduction in the operator's profit.
In the second-best equilibrium, the fare is adjusted so that the mode choice condition $c_{n}^{*} = c_{a}^{*}$ continues to hold.
As a result, the increase in the congestion externalities for SAV commuters (second term) is offset by a corresponding decrease in the SAV fare (third term); this decrease causes the corresponding increase in the fifth term, which appears as a decrease in the operator's revenue.
Hence, although the increase in the social cost appears to stem from a decline in the operator's revenue, attempting to restore that profit by further promoting SAV use would increase social cost even more.
Therefore, reducing the social cost requires an appropriate fare regulation scheme that resolves the underlying problem of excessive SAV usage.

\subsection{Social cost comparison under different fare-setting scenarios}
In the previous section, we showed that when $\eta < 1$, the ordering of the number of SAV commuters in the second-best equilibrium relative to the equilibria under the three other fare-setting scenarios is not predetermined. 
This implies that which of those scenarios yields the lowest social cost is likewise non-obvious and depends on the underlying parameters. 
Building on this insight, we now compare social costs across the three different fare-setting scenarios.

Throughout this section, we assume that Eq.~\eqref{Eq:Cond_NonUnique} holds strictly, so that SAVs are used under both AC pricing and unregulated monopoly pricing.
Specifically, we impose the following condition on the population size $N$:
\begin{align}
	N > N_{\mathrm{min}}:=\cfrac{B+ \sqrt{4AF_{a}}}{A}.
\end{align}

Substituting the equilibrium commuting costs and fares under each scenario, we obtain the corresponding social costs as follows:
\begin{itemize}
	\item MC pricing:
		\begin{align}
			SC^{MC*} = \left[\cfrac{\kappa AN + (1-\kappa)B}{1-\theta} + t_{f} + F_{n}\right]N + F_{a}.\label{Eq:SC_Marginal}
		\end{align}
		
	\item AC pricing: let $SC^{AC*}_{0}$ and $SC^{AC*}_{2}$ denote the social costs when the numbers of SAV commuters are $N^{AC*}_{0}$ and $N^{AC*}_{2}$, respectively.
		For simplicity, we omit the unstable equilibrium.
		Then,
		\begin{align}
			&SC^{AC*}_{0} = \left[\cfrac{A}{1-\theta}N + t_{f} + F_{n}\right]N + F_{a},\\
			&SC_{2}^{AC*} = \left[\cfrac{(1+\kappa)AN+(1-\kappa)B - (1-\kappa)\sqrt{(AN-B)^{2}-4AF_{a}}}{2(1-\theta)}  + t_{f} + F_{n}\right]N.
		\end{align}
		Note that when the number of SAV commuters is $N^{AC*}_{0} = 0$, no SAV commuters are present, and the operator therefore cannot recover the fixed cost.
		
	\item Natural monopoly:
		\begin{align}
			SC^{m*} &=  \left[\cfrac{(1+\kappa)AN + (1-\kappa)B}{2(1-\theta)} + t_{f} + F_{n}\right]N -\left[ \cfrac{(AN - B)^{2}}{4A} - F_{a}\right].
		\end{align}
\end{itemize}

By comparing these expressions, we find that the fare-setting that minimizes social cost depends primarily on the value of $\eta$.\footnote{The formal proofs of the following propositions in this section are provided in \Cref{Sec:App-SC_Comparison}.}
We first obtain the following proposition:
\begin{prop}
	Assume $\eta \geq 1$.
	Then, the following relation holds:
	\begin{align}
		SC^{MC*} < SC^{AC*}_{2} < SC^{m*} < SC^{AC*}_{0}.
	\end{align}	
\end{prop}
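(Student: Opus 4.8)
The plan is to avoid manipulating the four closed-form social-cost expressions individually, and instead to recognize all of them as values of the single convex function $SC^{SB}$, so that the required ordering follows from the geometry of a parabola rather than from four separate algebraic comparisons.

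First I would note that the social cost of \emph{any} fare-setting equilibrium with SAV count $N_{a}$ equals $SC^{SB}(N_{a})$ as defined in \eqref{Eq:Second-Best}. The reason is that the fare $p$ enters the commuters' aggregate cost (through the term $N_{a}c_{a}^{*}$) and the operator's profit $\pi$ with exactly offsetting signs, so it cancels as a pure transfer; what survives depends on $N_{a}$ alone, with $N_{n}=N-N_{a}$. Hence $SC^{MC*}=SC^{SB}(N_{a}^{MC*})$, $SC_{2}^{AC*}=SC^{SB}(N_{a2}^{AC*})$, $SC^{m*}=SC^{SB}(N_{a}^{m*})$, and $SC_{0}^{AC*}=SC^{SB}(N_{a0}^{AC*})$ with $N_{a0}^{AC*}=0$. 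Second, I would invoke the structure of $SC^{SB}$ already obtained in the second-best analysis: differentiating the first-order expression \eqref{Eq:DerivativeSecondBestObj} once more shows that the second derivative is the constant $2A>0$, so $SC^{SB}$ is a strictly convex parabola in $N_{a}$ with unique minimizer $N_{a}^{SB*}=\frac{AN(1+\eta)-B}{2A}$ from \eqref{Eq:Second-best-SAVNum}. Consequently $SC^{SB}$ is strictly decreasing on $[\,0,\,N_{a}^{SB*}\,]$, so on that interval a larger adoption level yields a strictly smaller social cost.

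Third, I would pin down the location of the four adoption levels relative to the minimizer. \textbf{Theorem~\ref{Theo:CompareEqui}} (under the strict form of \eqref{Eq:Cond_NonUnique} assumed throughout this subsection) gives $N_{a}^{MC*}>N_{a2}^{AC*}>N_{a}^{m*}>N_{a0}^{AC*}=0$, while the ordering established in the previous subsection for $\eta\ge 1$ supplies the remaining inequality $N_{a}^{SB*}>N_{a}^{MC*}$. Chaining these yields $0=N_{a0}^{AC*}<N_{a}^{m*}<N_{a2}^{AC*}<N_{a}^{MC*}<N_{a}^{SB*}$, so all four points lie strictly to the left of the minimizer, i.e. on the decreasing branch of the parabola. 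Applying the monotonicity from the previous step then reverses this ordering into the desired chain $SC^{MC*}<SC_{2}^{AC*}<SC^{m*}<SC_{0}^{AC*}$.

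The step I expect to be the main obstacle is the first identification, specifically the fixed-cost bookkeeping at the two delicate points. At the monopoly equilibrium the operator earns a strictly positive profit that must be netted out of social cost, whereas at $N_{a0}^{AC*}=0$ the operator collects no fare revenue yet still bears $F_{a}$; one must check that both cases are captured consistently by the single expression $SC^{SB}(N_{a})$ so that the parabola argument applies uniformly across all four equilibria. Should one prefer to bypass this identification, the three inequalities can instead be verified by direct subtraction of the closed forms: writing $D:=\sqrt{(AN-B)^{2}-4AF_{a}}$, the successive differences reduce to $SC_{2}^{AC*}-SC^{MC*}=N_{a1}^{AC*}\bigl[AN(\eta-1)+B+AN_{a1}^{AC*}\bigr]$, then $SC^{m*}-SC_{2}^{AC*}=\tfrac{D}{4A}\bigl(2\eta AN-D\bigr)$, and finally $SC_{0}^{AC*}-SC^{m*}=\tfrac{\eta N(AN-B)}{2}+\tfrac{(AN-B)^{2}}{4A}$, each manifestly positive once $\eta\ge 1$, $B>0$, $AN-B>0$ (Condition~\eqref{Eq:Cond_NonExtreme}), and $D<AN-B$ are used.
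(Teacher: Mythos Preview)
Your proposal is correct and takes a genuinely different route from the paper. The paper's argument (in \Cref{Sec:App-SC_Comparison}) proceeds by three separate pairwise subtractions of the closed-form social costs: for each of $SC_{2}^{AC*}-SC^{MC*}$, $SC^{m*}-SC_{2}^{AC*}$, and $SC_{0}^{AC*}-SC^{m*}$ it introduces an auxiliary function $G(N)$ and checks its sign case-by-case in $\eta$, with the case $\eta\ge 1$ falling out as the easiest sub-case of each comparison. Your main argument instead recognizes all four social costs as values of the single strictly convex parabola $SC^{SB}(\cdot)$, locates its (unconstrained) minimizer strictly to the right of $N_{a}^{MC*}$ once $\eta\ge 1$, and then imports the adoption ordering of \textbf{Theorem~\ref{Theo:CompareEqui}} to conclude via monotonicity on the decreasing branch. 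Your approach is more unified and explains \emph{why} the ordering holds---higher adoption is socially beneficial everywhere left of the second-best level---while the paper's route is self-contained but repeats essentially the same algebraic maneuver three times. The fallback direct computations you sketch in the final paragraph are essentially the paper's method (your expression $\tfrac{D}{4A}(2\eta AN-D)$ for $SC^{m*}-SC_{2}^{AC*}$ is exactly the paper's $\tfrac{K(2\eta AN-K)}{4A}$), so you have both routes covered. The fixed-cost bookkeeping you flag as the main obstacle is indeed the only delicate point; your treatment of it (that $p$ cancels as a transfer while $F_{a}$ survives uniformly through $-\pi$) is correct and matches how the paper implicitly handles the $N_{a0}^{AC*}$ and monopoly cases in its comparison formulas.
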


\noindent This proposition implies that, when $\eta$ is sufficiently large, MC pricing always yields the highest welfare. 
The reason is that for $\eta \geq 1$, an increase in the number of SAV commuters reduces commuting costs for all commuters, as explained in the preceding section. 
Hence, encouraging SAV adoption through MC pricing is socially desirable in this case.

We next derive the following proposition concerning the social costs when $\eta < 1$:
\begin{prop}
	Assume $1 > \eta \geq 1/2$.
	Then,
	\begin{align}
		\begin{cases}
			SC^{MC*} < SC^{AC*}_{2} < SC^{m*} < SC^{AC*}_{0}\quad &\text{if}\quad N_{\min} < N < N^{MC=AC}_{c},\\
			SC^{MC*} = SC^{AC*}_{2} < SC^{m*} < SC^{AC*}_{0}\quad &\text{if}\quad N = N^{MC=AC}_{c},\\
			SC^{AC*}_{2} < SC^{MC*} < SC^{m*} < SC^{AC*}_{0}\quad &\text{if}\quad N^{MC=AC}_{c} < N,
		\end{cases}
	\end{align}
	\textcolor{black}{where $N^{MC=AC}_{c}$ is the population threshold at which the ordering of social costs under MC and AC pricing reverses.}\footnote{\textcolor{black}{The closed-form expressions of the population thresholds used throughout this section are also provided in \Cref{Sec:App-SC_Comparison}.}}
\end{prop}
\noindent This proposition shows that when $\eta$ is small, AC pricing can be more efficient than MC pricing at high population levels. 
As $N$ grows, the SAV share implied by MC pricing becomes excessive relative to the second-best SAV share; the higher fare under AC pricing then works like a Pigouvian tax on the congestion externalities generated by SAVs and can achieve a lower social cost.

We further establish the proposition that characterizes social cost when $\eta$ falls below one-half:
\begin{prop}
	Assume $1/2 > \eta > 0$.
	Define the critical fixed-cost level
	\begin{align}
		F_{a,c}:=\cfrac{\eta^{2}B^{2}}{(1-2\eta)^{2}A}.
	\end{align}
	Whether the critical population level $N^{MC=AC}_{c}$ exceeds the minimum feasible population $N_{\mathrm{min}}$ depends on whether the fixed cost $F_{a}$ is below $F_{a,c}$.

%
%

	Assume $F_{a} \geq F_{a,c}$.
	Then,
	\begin{align}
		\begin{cases}
			SC^{AC*} < SC^{m*} < SC^{MC*}< SC^{AC*}_{0}\quad &\text{if}\quad N_{\mathrm{min}} < N < N^{AC=m}_{c},\\
			SC^{m*} \leq SC^{AC*} < SC^{MC*}< SC^{AC*}_{0}\quad &\text{if}\quad N^{AC=m}_{c} \leq N,
		\end{cases}
	\end{align}
	\textcolor{black}{where $N_{c}^{AC=m}$ is the population threshold at which the ordering of social costs under AC and unregulated monopoly pricing reverses.}
	

	Assume $F_{a} < F_{a,c}$.
	Then, 
	\begin{align}
		\begin{cases}
			SC^{MC*} < SC^{AC*} < SC^{m*} < SC^{AC*}_{0}\quad &\text{if}\quad N_{\mathrm{min}} < N < N^{MC=AC}_{c}\\
			SC^{AC*} \leq SC^{MC*} < SC^{m*} < SC^{AC*}_{0}\quad &\text{if}\quad N^{MC=AC}_{c} \leq N < N^{MC=m}_{c}\\
			SC^{AC*} < SC^{m*} \leq SC^{MC*} < SC^{AC*}_{0}\quad &\text{if}\quad N^{MC=m}_{c} \leq N < N^{AC=m}_{c}\\
			SC^{m*} \leq SC^{AC*} < SC^{MC*} < SC^{AC*}_{0}\quad &\text{if}\quad N^{AC=m}_{c} \leq N
		\end{cases}
	\end{align}
	\textcolor{black}{where $N_{c}^{MC=m}$ is the population threshold at which the ordering of social costs under MC and unregulated monopoly pricing reverses.}
	
\end{prop}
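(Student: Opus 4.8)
The plan is to prove the proposition by comparing the four closed-form social costs pairwise and tracking the sign of each difference as $N$ ranges over the feasible interval $N>N_{\min}$. Every expression contains the common term $(t_{f}+F_{n})N$, which cancels in each difference. Since the monopoly profit is $\pi^{m*}=(AN-B)^{2}/(4A)-F_{a}$ while the two average-cost equilibria carry zero profit, substituting the closed forms and using the identity $(1-\kappa)/(1-\theta)=\eta$ together with the abbreviation $D:=\sqrt{(AN-B)^{2}-4AF_{a}}$ (so that $(AN-B)^{2}/(4A)-F_{a}=D^{2}/(4A)$) reduces the three relevant differences to
\begin{align}
	SC^{MC*}-SC^{AC*}_{2} &= \frac{\eta N}{2}\bigl(D-(AN-B)\bigr)+F_{a},\\
	SC^{MC*}-SC^{m*} &= \frac{AN-B}{4A}\bigl((AN-B)-2A\eta N\bigr),\\
	SC^{AC*}_{2}-SC^{m*} &= \frac{D}{4A}\bigl(D-2A\eta N\bigr).
\end{align}
These three identities are the backbone of the argument, and the assumption $\eta<1/2$ (so that $1-2\eta>0$) is what makes each of the critical levels well defined.

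Second, I would read off the sign changes. The last two differences are already factored: the factor $(AN-B)-2A\eta N=(1-2\eta)AN-B$ is increasing in $N$ and vanishes exactly at $N^{MC=m}_{c}=B/((1-2\eta)A)$, while the factor $D-2A\eta N$ (squared, which is admissible because both $D$ and $2A\eta N$ are non-negative) vanishes exactly at $N^{AC=m}_{c}$; note that at $N_{\min}$ one has $D=0$, so $SC^{AC*}_{2}-SC^{m*}<0$ there, and since the quantity $D^{2}-(2A\eta N)^{2}$ is an upward parabola in $N$ with a single root above $N_{\min}$, the sign changes once. For the first difference I would rewrite it as $F_{a}\bigl[1-2A\eta N/\bigl((AN-B)+D\bigr)\bigr]$, so that $SC^{MC*}-SC^{AC*}_{2}>0$ is equivalent to $D>B-(1-2\eta)AN$. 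Because $D$ is increasing and $B-(1-2\eta)AN$ is decreasing in $N$, this equation has a unique root $N^{MC=AC}_{c}$; moreover, since the equality forces $D=B-(1-2\eta)AN\ge 0$, the root automatically satisfies $N^{MC=AC}_{c}\le N^{MC=m}_{c}$, and isolating the radical and squaring reproduces the stated closed form for $N^{MC=AC}_{c}$.

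Third, I would establish the dichotomy governed by $F_{a,c}$. The crucial point is that both MC-crossings meet the feasibility boundary $N_{\min}=(B+2\sqrt{AF_{a}})/A$ at the same fixed-cost level: a direct computation gives $N^{MC=m}_{c}\le N_{\min}\iff \eta^{2}B^{2}/((1-2\eta)^{2}A)\le F_{a}\iff F_{a}\ge F_{a,c}$, and evaluating the first difference at $N_{\min}$ (where $D=0$) yields $F_{a}-\eta N_{\min}\sqrt{AF_{a}}$, whose sign flips at precisely $F_{a}=F_{a,c}$, so that $N^{MC=AC}_{c}\lessgtr N_{\min}\iff F_{a}\gtrless F_{a,c}$. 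Hence, when $F_{a}\ge F_{a,c}$ both MC-crossings lie at or below $N_{\min}$, so throughout the feasible range $SC^{MC*}$ exceeds both $SC^{AC*}_{2}$ and $SC^{m*}$; the only interior crossing is then AC-versus-monopoly at $N^{AC=m}_{c}$, giving the two-interval ordering. When $F_{a}<F_{a,c}$, both MC-crossings enter the feasible range.

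Finally, for $F_{a}<F_{a,c}$ I would order the three interior thresholds by consistency rather than by comparing radicals head on. We already have $N^{MC=AC}_{c}\le N^{MC=m}_{c}$; and at $N=N^{MC=m}_{c}$, where $SC^{MC*}=SC^{m*}$, the fact that $N^{MC=m}_{c}>N^{MC=AC}_{c}$ gives $SC^{AC*}_{2}<SC^{MC*}=SC^{m*}$, which places $N^{MC=m}_{c}\le N^{AC=m}_{c}$. Together these yield $N_{\min}<N^{MC=AC}_{c}<N^{MC=m}_{c}<N^{AC=m}_{c}$, and reading the sign of each difference on the four resulting subintervals reproduces the stated orderings (with equalities at the threshold values). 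Throughout, $SC^{AC*}_{0}$ remains the largest: on the two average-cost equilibria the profit is zero, so $SC^{AC*}_{0}-SC^{AC*}_{2}=N\bigl(c_{0}^{AC*}-c_{2}^{AC*}\bigr)>0$ by the commuting-cost ordering of \textbf{Theorem~\ref{Theo:CompareEqui}}, and $SC^{AC*}_{0}$ likewise dominates $SC^{MC*}$ and $SC^{m*}$ once the non-negative deficit and monopoly-profit terms are accounted for. The main obstacle is the bookkeeping around the radical $D$: each squaring introduces a sign condition, and it is exactly the validity region of these conditions that pins down the relative order of the thresholds and the position of $F_{a,c}$. Ensuring that no spurious roots are admitted—rather than the individual algebraic simplifications, which are routine—is where the real care is required.
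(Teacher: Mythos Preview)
Your approach is essentially the paper's: pairwise comparison of the closed-form social costs, each reduced to tracking the sign of a single factor in $N$; your three key identities and the derivations of $N_c^{MC=AC}$, $N_c^{MC=m}$, $N_c^{AC=m}$ and $F_{a,c}$ all coincide with the paper's Case-3 analysis in the appendix (the paper writes $K$ for your $D$ and packages each comparison as a monotone auxiliary function $G(N)$, but the algebra is the same). The one point worth flagging is that your consistency argument for the ordering $N_c^{MC=AC}\le N_c^{MC=m}\le N_c^{AC=m}$ is an addition---the paper derives each threshold separately and leaves the ordering implicit in the statement; and one small slip: at $N=N_{\min}$ you have $D=0$, so $SC^{AC*}_2-SC^{m*}=0$ there rather than strictly negative, with strictness holding just above $N_{\min}$, which is all the argument requires.
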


\noindent This proposition shows that, for sufficiently small $\eta$, the natural-monopoly scenario can yield the lowest social cost among the three fare-setting scenarios.
Indeed, as $\eta \rightarrow 0$ (i.e., $\kappa \rightarrow 1$), the following qualitative result holds.
\begin{coro}
	As $\eta \rightarrow 0$, the number of SAV commuters and the fare in the second-best equilibrium approach those under unregulated monopoly pricing; that is,
	\begin{align}
		N_{a}^{SB*}\rightarrow \cfrac{AN-B}{2A},\quad p^{SB*} \rightarrow m + \cfrac{AN-B}{2}.
	\end{align}	
\end{coro}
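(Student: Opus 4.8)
The plan is to treat the corollary as a continuity statement and read it off directly from the closed-form expressions already in hand. Concretely, I would start from the second-best formulas for $N_{a}^{SB*}$ in \eqref{Eq:Second-best-SAVNum} and $p^{SB*}$ in \eqref{Eq:Second-best-fare}, together with the natural-monopoly expressions $N_{a}^{m*}$ in \eqref{Eq:AutoNum_Monopoly} and $p^{m*}$ in \eqref{Eq:Fare_Monopoly}. The key structural observation that makes the argument essentially immediate is that the coefficients $A=\beta\gamma(1-\theta)/((\beta+\gamma)\mu)$ and $B=\theta t_{f}+m-(t_{f}+F_{n})$ depend only on $\theta,\beta,\gamma,\mu,t_{f},F_{n},m$ and are \emph{independent of the capacity parameter} $\kappa$. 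Since the limit $\eta=(1-\kappa)/(1-\theta)\to 0$ is realized by letting $\kappa\to 1$ while holding $\theta$ and the remaining primitives fixed, both $A$ and $B$ stay constant along the limiting path. Consequently $N_{a}^{SB*}$ and $p^{SB*}$ are affine functions of $\eta$ with fixed coefficients.

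Given this, the second step is simply to pass to the limit. Because $\eta\mapsto N_{a}^{SB*}(\eta)$ and $\eta\mapsto p^{SB*}(\eta)$ are linear (hence continuous) in $\eta$, evaluating at $\eta=0$ gives $N_{a}^{SB*}\to(AN-B)/(2A)$ and $p^{SB*}\to m+(AN-B)/2$, which are exactly $N_{a}^{m*}$ and $p^{m*}$. This is the asserted convergence, and no further computation is required beyond substituting $\eta=0$ into the two affine expressions and matching them term by term against the monopoly formulas.

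The only point requiring care, and the main (though modest) obstacle, is to confirm that the \emph{interior} second-best formulas remain the operative ones in a neighborhood of $\eta=0$, so that the boundary cases ($N_{a}^{SB*}=0$ or $N_{a}^{SB*}=N$) flagged in the preceding proposition do not override them. For this I would invoke Assumption~1, i.e.\ Condition~\eqref{Eq:Cond_NonExtreme} giving $0<B<AN$: it yields $AN-B>0$, so the limit value $(AN-B)/(2A)$ is strictly positive, while $(AN-B)/(2A)=N/2-B/(2A)<N/2<N$ places it strictly below $N$. Hence the limit point is interior, the closed-form expression is the applicable one for $\eta$ near $0$, and the substitution is legitimate; by continuity the interiority persists for all sufficiently small $\eta>0$, so the limit is taken along the interior branch. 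Optionally, I would append a one-line interpretive remark: the regime $\kappa\to 1$ is precisely where the capacity effect of SAVs vanishes, so that the benefit SAV adoption confers on NV commuters (reflected in $\partial c_{n}^{*}/\partial N_{a}=-(1-\kappa)\beta\gamma/(\mu(\beta+\gamma))\to 0$) disappears; with this positive externality gone, the social planner no longer wishes to push adoption beyond the private optimum, which is why the welfare-minimizing fare collapses onto the profit-maximizing monopoly fare.
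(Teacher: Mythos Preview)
Your proposal is correct and takes essentially the same approach as the paper: the corollary is obtained by direct substitution of $\eta=0$ into the closed-form second-best expressions \eqref{Eq:Second-best-SAVNum} and \eqref{Eq:Second-best-fare} and matching against the monopoly formulas \eqref{Eq:AutoNum_Monopoly} and \eqref{Eq:Fare_Monopoly}. Your additional verification that the interior branch applies near $\eta=0$ and your interpretive remark about the vanishing capacity externality both align with (and slightly sharpen) the paper's own discussion following the corollary.
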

\noindent Hence, when the capacity effect of SAVs is negligible, leaving the service unregulated is the welfare-maximizing policy.

The natural monopoly becomes welfare-optimal because, as discussed in \Cref{Sec:Equilibrium}, shifting commuters from NVs to SAVs no longer affects commuting costs when $\kappa$ approaches $1$.
Formally, the sensitivity of social cost gives
\begin{align}
	\cfrac{\partial SC^{SB}(N_{a}^{*})}{\partial N_{a}^{*}} 
	&= - (1-\kappa)\cfrac{\beta \gamma}{\mu(\beta + \gamma)} N - \cfrac{\partial \pi}{\partial N_{a}^{*}}
	\rightarrow - \cfrac{\partial \pi}{\partial N_{a}^{*}}.
\end{align}
As the equation shows, the first term, which represents the change in total commuting cost, vanishes when $\kappa = 1$. 
In that case, the change in social cost equals the change in profit, and the second-best equilibrium therefore coincides with the profit-maximizing outcome.


\begin{figure}[t]
	\centering
	\hspace{0mm}
    \includegraphics[width=0.8\linewidth,clip]{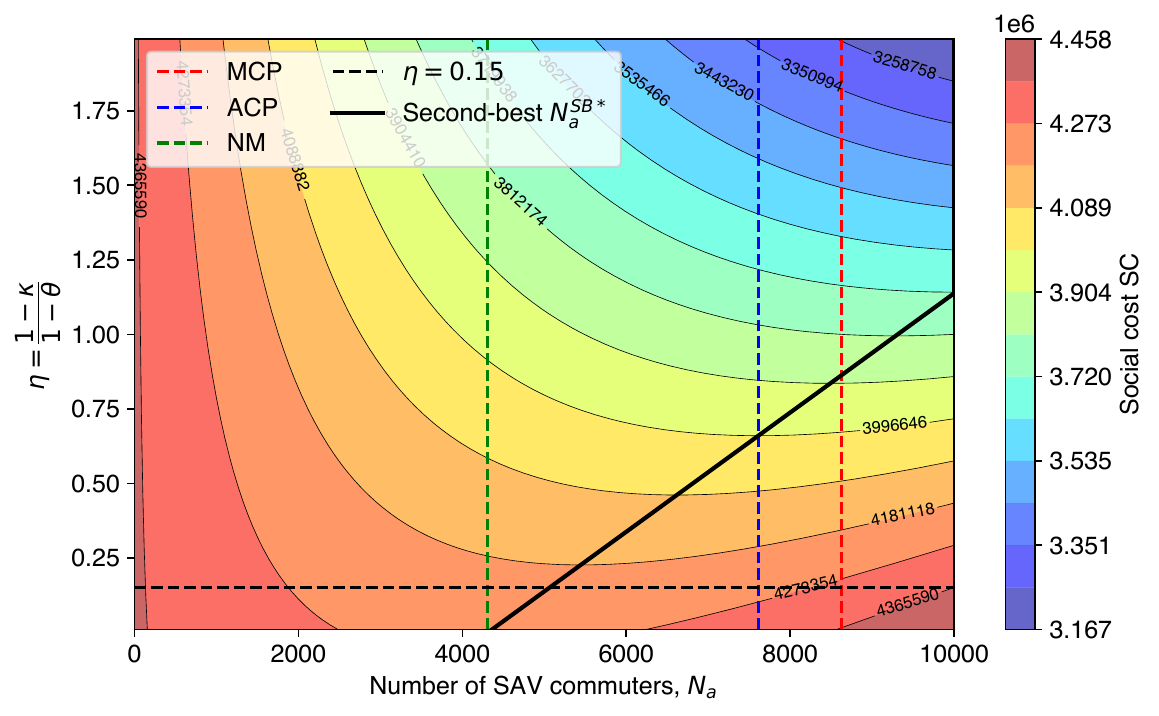}
    \vspace{-2mm}
    \caption{Contour plot of the social cost. \textcolor{black}{$N=10000$, $\theta = 0.5$, $\beta = 0.3$, $\gamma = 2.0$, $\mu = 20$, $t_{f} = 2$, $F_{n} = 300$, $m=210$, $w=100$, $F_{a} = 50000$.}}
    \vspace{-0mm}
    \label{Fig:Contour}
\end{figure}

\Cref{Fig:Contour} shows a contour plot of the social cost as a function of the number of SAV commuters, $N_{a}$, and the index $\eta$. In the plot, $\theta$ is held constant while $\kappa$ varies, so changes in $\eta$ arise solely from changes in $\kappa$.
First, when $\eta$ takes small values, the contours protrude toward the lower-left corner. 
This shape reveals that increasing $N_{a}$ does not necessarily reduce the social cost. 
For example, along the line $\eta = 0.15$, the social cost initially decreases as $N_{a}$ increases, but begins to increase once the curve bends.
As $\eta$ approaches zero, the socially optimal $N_{a}$ converges to the monopoly level $N_{a}^{m*}$.
By contrast, when $\eta$ becomes large, the contours no longer protrude to the lower left. 
In this region, the capacity-expansion effect is strong, and increasing $N_{a}$ is socially desirable. 
Above a certain threshold for $\eta$, it is optimal for all commuters to use SAVs, as indicated by the closed-form solution for $N_{a}^{*}$.

\subsection{Pricing strategy for social-cost reduction and Pareto improvement}
Based on the social cost analysis, we finally link the social cost to the pricing strategy introduced in \Cref{Sec:Equilibrium}.
We focus on the relationship between equilibrium commuting costs and social costs under AC pricing and unregulated monopoly pricing.
If $\eta \geq 1$ or $N < N_{c}^{AC=m}$, AC pricing dominates unregulated monopoly pricing in terms of both commuters' equilibrium cost and social cost:
\begin{align}
c^{AC*}_{2} < c^{m*},\quad \text{and}\quad
SC^{AC*}_{2} < SC^{m*}.
\end{align}
Hence, the pricing strategy can improve both cost measures.

Meanwhile, when $0 < \eta < 1$ and the population size exceeds the critical level $N_{c}^{AC=m}$, the welfare ordering reverses:
\begin{align}
	c^{AC*}_{2} < c^{m*},\quad \text{and}\quad
	SC^{AC*}_{2} > SC^{m*},
\end{align}
Promoting SAV adoption through AC pricing still reduces commuting costs, yet it increases the social cost. The reason is that a small $\eta$, reflecting a weak capacity effect $\kappa$ with a strong VOT effect $\theta$, provides only limited social gains from further SAV adoption. Switching to AC pricing increases the number of SAV commuters (see \Cref{Sec:Equilibrium_Compare}). However, because the capacity effect is weak, this increase worsens congestion among SAV commuters, as discussed in \Cref{Sec:SecondBest_Closed-form}. While the negative impact on commuters' costs is offset by the fare reduction under AC pricing, this fare reduction, in turn, lowers the service provider's profit. As a result, the social cost can increase even when commuting costs decrease.

The discussion suggests that a pricing strategy that switches between fare-setting rules based solely on SAV adoption can increase the social cost.
If a policymaker's concern is confined to commuters' costs, such a strategy is sufficient, as it reduces them.
By contrast, if the policymaker also aims to reduce the social cost, this threshold rule can be augmented with a second criterion: the technological maturity of the capacity effect, captured by $\eta$.
Specifically, we propose the following pricing strategy:
\begin{description}
\item[\textbf{Step 1}] \textit{No fare regulation.}
Keep the fare unregulated and allow monopoly pricing.

\item[\textbf{Step 2}] \textit{Technological maturation under monopoly.}
Maintain monopoly pricing until $\eta\geq 1$ holds.
	Operationally, wait until $\kappa$ has fallen sufficiently.

\item[\textbf{Step 3}] \textit{Introduce AC fare regulation.}
Once $\eta\geq 1$ is satisfied, implement AC pricing.
\end{description}

\noindent Under this strategy, the number of SAV commuters rises to $N_{a2}^{AC*}$.
In moving from the monopoly equilibrium to the AC-pricing equilibrium, both the equilibrium commuting cost and the social cost reduce; a Pareto improvement is achieved.

%
%
%

This pricing strategy suggests a simple rule: postpone fare regulation until the capacity-expansion technology has matured, for example, until SAV platooning can reliably sustain shorter headways.
In the early diffusion stage, the capacity effect would be weak ($\eta$ is low), so allowing unregulated monopoly pricing is socially preferable. 
During this period, resources should be channelled toward accelerating technological advances that strengthen capacity expansion. 
Once the technology matures and SAV adoption expands, switching to AC pricing becomes socially beneficial for SAV commuters.


\begin{figure}[t]
	\centering
	\hspace{0mm}
    \includegraphics[width=0.8\linewidth,clip]{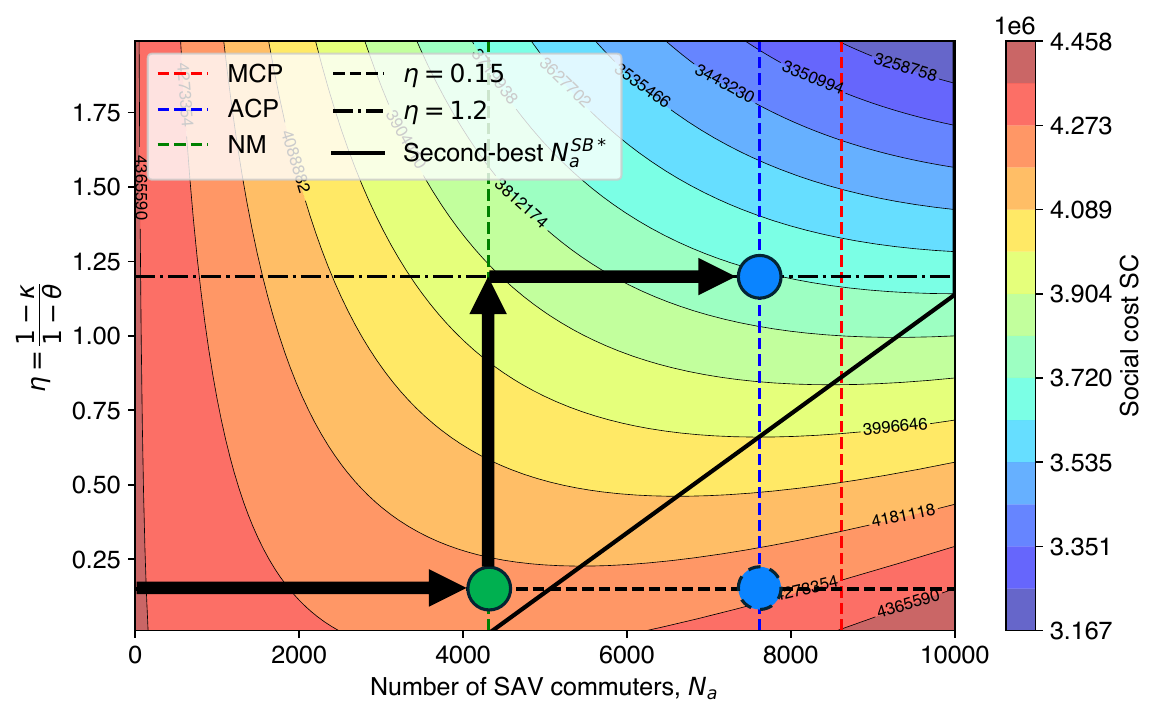}
    \vspace{-2mm}
    \caption{Regulatory strategy combined with the index $\eta$}
    \vspace{-0mm}
    \label{Fig:RegulatoryStrategy}
\end{figure}

\Cref{Fig:RegulatoryStrategy} shows the same social-cost contour plot as in \Cref{Fig:Contour}, now used to illustrate the effectiveness of the proposed pricing strategy.
Assume first that $\eta = 0.15$.
Under unregulated monopoly pricing, the number of SAV commuters increases to the monopoly-equilibrium level (the green circle in the figure). 
Switching to AC pricing increases the number of SAV commuters further (the blue circle with a dashed outline); however, it raises the social cost.
Next, we follow Phase 2 of the proposed strategy and maintain unregulated monopoly pricing until $\eta$ becomes $1.2$.
The figure indicates that the traffic state has shifted into a region where the capacity-expansion effect is strong enough that additional SAV commuters reduce the social cost.
At this point, switching to AC pricing (Phase 3) further increases the number of SAV commuters while simultaneously reducing the social cost (shifting to the blue circle).

In summary, our results identify $\eta$ as the key parameter that aligns reductions in social cost with reductions in equilibrium cost in both first-best and second-best policy design.
This finding underscores the importance of allocating limited innovation budgets to capacity-expansion technologies that deliver social benefits.
Therefore, effective utilization of SAVs in the transport system requires fare regulation that is tied not only to the current level of SAV adoption but also to the maturity of the underlying technology; combining these two triggers is essential for realizing the full social benefit of SAVs.

\section{Conclusion}\label{Sec:Conclusion}
This paper examines how scale economies in SAV operations affect the efficiency of a mixed-traffic transportation system by analyzing three fare-setting scenarios: MC pricing, AC pricing, and unregulated monopoly pricing.
First, our equilibrium analysis showed that MC pricing, although it forces the operator to run at a deficit, yields the lowest commuting cost among the three scenarios.
We also demonstrated a two-step pricing strategy: allow monopoly pricing initially and switch to AC pricing once SAV adoption has passed a critical threshold.
This strategy eliminates the deficit while promoting SAV adoption and reducing commuting costs.
In addition, we find that the Downs--Thomson paradox can arise in SAV system under AC pricing: capacity expansion can increase commuting costs because the SAV adoption decreases.
These findings confirm the superiority of MC pricing for reducing commuting costs, and demonstrate that a well-timed implementation of AC pricing can achieve a convenient and economically sustainable transportation system for commuters.

Next, we evaluate the social cost, defined as the commuters' total costs minus the operator's profit.
When a time-varying congestion toll can be implemented alongside fare regulation, the first-best policy is MC pricing combined with the optimal toll.
Using the closed-form solution, we derive the condition under which implementing the optimal toll to the MC equilibrium produces a Pareto improvement: the SAV capacity effect must exceed its VOT effect.
We also propose that the toll revenue is used for financing the road capacity and show that the self-financing principle holds.
We then solve the second-best problem, in which only the fare can be regulated, and compare its solution with the three fare-setting scenarios.
The results show that MC pricing does not always minimize the social cost.
When the capacity effect is weaker than the VOT effect, promoting SAV adoption can increase congestion externalities; their impact is reflected in a decrease in the operator's profit.
In the extreme case of a negligible capacity effect, unregulated monopoly pricing becomes the most efficient.
Overall, the impact of fare regulation can become far from intuitive.
The choice and timing of MC or AC pricing must be tailored to the current SAV adoption level and the maturity of the underlying technology to achieve a socially efficient transportation system.

A natural direction for future research is to extend our model spatially, for example, to a corridor or network with multiple bottlenecks. In such a setting, it is important to examine the spatio-temporal patterns of NV and SAV use and to assess how fare regulation alters those patterns.
\textcolor{black}{It would also be important to examine flow and cost patterns in long-run settings where travel demand may be elastic, for example, when commuters can adjust their residential locations and when the number of commuters is endogenously determined.} The literature shows that commuters sort themselves spatially according to their heterogeneity~\citep[e.g.,][]{Takayama2017-ge,Osawa2018-ip,Sakai2024-zq}. Hence, adding a mode-choice dimension is expected to yield a clear sorting pattern in which each mode is preferred by commuters living at specific distances from the common destination. \textcolor{black}{In addition, allowing total demand $N$ to respond to generalized travel cost~\citep[e.g.,][]{Arnott1993-jd,Fosgerau2018-ju} would be important for capturing potential induced-demand effects when SAVs lower travel costs. These extensions could then provide policy insights not only for fare regulation but also for land-use policy.
Incorporating an explicit parking model that accounts for parking location choice and capacity constraints~\citep[][]{Arnott2010-ei,Liu2018-pa,Tian2019-ug} is another promising direction. This would link NV parking costs to traffic conditions and provide a more refined assessment of NV--SAV adoption and the welfare impacts of fare regulation.}


\textcolor{black}{On the supply side, an important direction for future research is to explicitly incorporate SAV operational features, for example, by modeling pickup waiting time and deadheading as outcomes of an SAV operator's fleet decisions and operations. Such operational frictions can worsen congestion and, in some cases, may reduce effective bottleneck capacity as SAV usage increases (i.e., $\kappa > 1$). Because these supply-side mechanisms may interact with the scale economies considered in our framework, it would be important to examine whether, and to what extent, the equilibrium outcomes and policy implications identified here are preserved under such extensions. Explicitly modeling the operator's fleet decisions is also closely related to a key advantage of SAVs: vehicles can be redeployed for other tasks after drop-off. It would therefore be interesting to incorporate this feature explicitly and to assess the impacts of SAV adoption from this perspective.}
Another important direction is to incorporate public transit. Because transit operations would be subject to different scale economies, their inclusion is likely to generate additional multiple equilibria with varying shares of NV, SAV, and transit commuters. Analyzing them would clarify how best to combine SAV services with conventional transit.
It would also be interesting to explore new schemes for redistributing the service provider's profit. This study shows that the second-best policy can coincide with profit maximization by a monopolistic provider; accordingly, an appropriate redistribution of those profits is crucial for broad social acceptance of the policy.
Finally, it would be worthwhile to explore alternative market structures, such as oligopolistic competition among SAV operators. These analyses deepen our understanding of SAV systems that incorporate scale economies.

\section*{Acknowledgement}
We thank Minoru Osawa for their helpful comments. 
We are also grateful to the associate editor and the anonymous referees for their valuable comments.
This work was supported by JSPS KAKENHI (Grants JP23K13418, JP25K01339, and JP25H00751), JST FOREST Program (Grant JPMJFR215M), and the Council for Science, Technology and Innovation (Grant JPJ012187).

\appendix

\section{Closed-form solution of mode-choice equilibrium}\label{Sec:App-ClosedFormSolution_DUE}
Using the mode-specific commuting costs~\eqref{Eq:DepartEquiCost-normal} and \eqref{Eq:DepartEquiCost-autonomous}, the equilibrium condition $c^{*}_{n} = c_{a}^{*}$ can be written as
\begin{align}
	\cfrac{\beta \gamma}{\mu(\beta + \gamma)}(N_{n} + \kappa N_{a}) + t_{f} + F_{n} = \cfrac{\beta \gamma}{\mu(\beta + \gamma)}(\theta N_{n} + \kappa N_{a}) + \theta t_{f} + p  +  w.
\end{align}
Applying the flow-conservation condition $N_{n} + N_{a}$, this expression reduces to
\begin{align}
	\cfrac{\beta \gamma (1-\theta)}{\mu(\beta + \gamma)} (N - N_{a}) - (p-m)
	= \theta t_{f}+ m +  w - t_{f}  - F_{n}.\label{Eq:App1-BaseEquation}
\end{align}
Below, we solve this equation for $N_{a}$ under each fare-setting scenario and then derive the corresponding equilibrium commuting cost.

\subsection{MC pricing}
Assume $p=m$.
Substituting this into Eq.~\eqref{Eq:App1-BaseEquation}, we obtain the equilibrium number of SAV commuters:
\begin{align}
	N_{a}^{MC*} = N - \cfrac{\mu (\beta + \gamma)}{\beta \gamma (1-\theta)}(\theta t_{f} + m +  w - t_{f} - F_{n})
	=\cfrac{AN - B}{A}.
\end{align}
Accordingly,
\begin{align}
	N_{n}^{MC*} = N - N_{a}^{MC*} = \cfrac{B}{A}.
\end{align}
Thus, when $AN-B>0$ and $B>0$, $N_{n}^{MC*} > 0$ and $N_{a}^{MC*} > 0$.
In this case, the equilibrium commuting cost is 
\begin{align}
	c^{MC*} = \cfrac{\kappa AN + (1-\kappa)B}{1-\theta} + t_{f} + F_{n}.
\end{align}

If $B \leq 0$, the equilibrium condition $c^{*}_{n} = c_{a}^{*}$ implies $N_{n}^{MC*}\leq 0$.
The solution therefore lies on the boundary, so $N_{n}^{MC*} = 0$ and $N_{a}^{MC*} = N$.
Conversely, when $AN-B\leq 0$, $N_{n}^{MC*} = N$ and $N_{a}^{MC*} = 0$.

\subsection{AC pricing}
Assume $p = m + F_{a}/N_{a}$.
First, if $N_{a} = 0$, the fare $p$ diverges to infinity, so $c_{a}^{*}$ also becomes unbounded.
Consequently, the equilibrium becomes the boundary solution $N_{n0}^{AC*} = N$ and $N_{a0}^{AC*} = 0$.

Consider the case where $N_{a} > 0$.
Substituting the equation into Eq.~\eqref{Eq:App1-BaseEquation} yields the quadratic equation,
\begin{align}
	A(N_{a})^{2} - (AN-B)N_{a} + F_{a} = 0.
\end{align}
When the discriminant $(AN-B)^{2} - 4AF_{a} \geq 0$, the equation has two candidate solutions:
\begin{align}
	N_{a1}^{AC*}=\cfrac{AN-B-K}{2A},\quad N_{a2}^{AC*}=\cfrac{AN-B+K}{2A}.
\end{align}
where $K = \sqrt{(AN-B)^{2}-4AF_{a}}$.
The corresponding numbers of NV commuters are
\begin{align}
	N_{n1}^{AC*}=\cfrac{AN + B + K}{2A},\quad N_{n2}^{AC*}=\cfrac{AN + B - K}{2A}.
\end{align}
Because all of these expressions are strictly positive when $AN-B>0$ and $B>0$, these equilibrium solutions are valid and do not lie on either boundary.

The corresponding equilibrium commuting costs are as follows:
\begin{align}
	&c^{AC*}_{0} = \cfrac{A}{1-\theta}N + t_{f} + F_{n},\\
	&c^{AC*}_{1} = \cfrac{(1+\kappa)AN+(1-\kappa)B + (1-\kappa)K}{2(1-\theta)}  + t_{f} + F_{n},\\
	&c^{AC*}_{2} = \cfrac{(1+\kappa)AN+(1-\kappa)B - (1-\kappa)K}{2(1-\theta)}  + t_{f} + F_{n}.
\end{align}

\subsection{Unregulated monopoly pricing}
Substituting the optimal fare~\eqref{Eq:Fare_Monopoly} yields the following equation:
\begin{align}
	A (N - N_{a}) - \cfrac{AN-B}{2}
	- B = 0.
\end{align}
Solving this equation, we have 
\begin{align}
	N_{a}^{m*} = \cfrac{AN-B}{2A},\quad N_{n}^{m*} = \cfrac{AN + B}{2A}.
\end{align}
Because these values are positive when $AN-B>0$ and $B>0$, the equilibrium does not lie on either boundary.

The equilibrium commuting cost is
\begin{align}
	c^{m*} &= \cfrac{(1+\kappa)AN + (1-\kappa)B}{2(1-\theta)} + t_{f} + F_{n}.
\end{align}

\section{Stability analysis under AC pricing}\label{Sec:App-StabilityAnalysis}

\subsection{Formal definition of the local asymptotic stability}
Let $N_{a}^{*}$ denote the equilibrium number of SAV commuters.
The equilibrium is \textit{Lyapunov stable} if, for every $\epsilon > 0$ there exists $\delta > 0$ such that
\begin{align}
	|N_{a}(0) - N_{a}^{*}| < \delta  \;\;\Longrightarrow\;\; 
	|N_{a}(\tau) - N_{a}^{*}| < \epsilon\quad \forall \tau \geq 0.
\end{align}
The equilibrium is \textit{attractive} if there exists $\bar{\delta}>0$ such that
\begin{align}
	|N_{a}(0)-N_{a}^{*}|<\bar{\delta} 
	\;\;\Longrightarrow\;\;
	\lim_{\tau\to\infty} N_{a}(u)=N_{a}^{*}.
\end{align}
An equilibrium is \textit{locally asymptotically stable} if it satisfies both properties simultaneously.

We thus prove that $N_{a0}^{AC*}$ and $N_{a2}^{AC*}$ are Lyapunov stable and attractive, and $N_{a1}^{AC*}$ is not Lyapunov stable and attractive.

\subsection{Stability of $N_{a0}^{AC*}$}
We begin with the following lemma:
\begin{lemm}
	There exists a constant $\bar{\delta}>0$ such that
\begin{align}
	N_{a0}^{AC*} < N_{a} < N_{a0}^{AC*} + \bar{\delta}
	\quad\Longrightarrow\quad
	c_{a}^{*}-c_{n}^{*}>0 .
	\end{align}	
\end{lemm}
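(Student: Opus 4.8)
The plan is to reduce the sign of $c_a^*-c_n^*$ under AC pricing to a single explicit function of $N_a$ and then exploit the divergence of the average-cost term as $N_a\to 0^+$. First I would substitute the flow-conservation identity $N_n=N-N_a$ together with the AC fare from \eqref{Eq:AverageCostPricing}, namely $p=m+F_a/N_a$, into the equilibrium commuting costs \eqref{Eq:DepartEquiCost-normal} and \eqref{Eq:DepartEquiCost-autonomous}, and form the difference. The $\kappa N_a$ terms cancel, and using the definitions $A=\beta\gamma(1-\theta)/((\beta+\gamma)\mu)$ and $B=\theta t_f+m-(t_f+F_n)$ the difference collapses to
\begin{align}
	c_a^*-c_n^* = A N_a + \cfrac{F_a}{N_a} - (AN-B).
\end{align}
This is exactly the left-hand side of the quadratic $A N_a^2-(AN-B)N_a+F_a=0$ divided by $N_a$, so its positive zeros are precisely $N_{a1}^{AC*}$ and $N_{a2}^{AC*}$, consistent with \Cref{Prop:Equi_ACP}.

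The key step is then a limit argument. Since $F_a>0$, the term $F_a/N_a\to+\infty$ as $N_a\to 0^+$, whereas $A N_a$ and the constant $AN-B$ remain bounded. Hence $c_a^*-c_n^*\to+\infty$ as $N_a\to 0^+$, and in particular $c_a^*-c_n^*>0$ for all sufficiently small $N_a>0$. This already establishes the existence of some $\bar\delta>0$ with the claimed property, because $N_{a0}^{AC*}=0$.

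To exhibit the constant explicitly, I would use continuity of $c_a^*-c_n^*$ in $N_a$ on $(0,N]$. Under Condition~\eqref{Eq:Cond_NonUnique} the smallest positive zero of this continuous function is $N_{a1}^{AC*}$, so on the open interval $(0,N_{a1}^{AC*})$ it cannot change sign and therefore retains the positive sign it has near zero; any $\bar\delta\in(0,N_{a1}^{AC*}]$ then works. When \eqref{Eq:Cond_NonUnique} fails there is no positive zero at all, so any $\bar\delta\le N$ suffices. The main obstacle is purely bookkeeping: the algebraic simplification of $c_a^*-c_n^*$ must be carried out carefully so that the singular term $F_a/N_a$ is correctly isolated. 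Once that reduction is in hand, both the divergence argument and the continuity/sign argument are immediate.
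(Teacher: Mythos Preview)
Your proof is correct and follows essentially the same approach as the paper: both reduce $c_a^*-c_n^*$ under AC pricing to the explicit expression $AN_a+F_a/N_a-(AN-B)$ and then invoke the divergence of $F_a/N_a$ as $N_a\to 0^+$ to conclude. Your additional remark identifying $\bar\delta$ with any value in $(0,N_{a1}^{AC*}]$ (or any $\bar\delta\le N$ when \eqref{Eq:Cond_NonUnique} fails) is a nice refinement that the paper omits.
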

\begin{proof}
	Substituting $N_{a}=\bar{\delta}$ $(\because N_{a0}^{AC*} = 0)$, we obtain the following relation for $c_{a}^{*}-c_{n}^{*}$:
	\begin{align}
		c_{a}^{*}-c_{n}^{*} = A\bar{\delta} + \cfrac{F_{a}}{\bar{\delta}} - (AN-B).
	\end{align}
	Since $\,\bar{\delta}\to0$ implies $F_{a}/\bar{\delta}\to\infty$ monotonically, for $\bar{\delta}$ chosen sufficiently small,
	$c_{a}^{*}-c_{n}^{*}>0$ holds for every $N_{a}\in(0,\bar{\delta})$.
\end{proof}

Using the lemma, we first show Lyapunov stability.
Fix an arbitrary $\epsilon>0$ and define
\begin{align}
	\delta(\epsilon):=\min\{\epsilon,\bar{\delta}\},
	\qquad
	\mathcal{N}_{\delta(\epsilon)}
	:=\bigl\{N_{a}\mid 0<N_{a}<\delta(\epsilon)\bigr\}.
\end{align}

\noindent Assume that the initial value $N_{a}(0)$ (at $u=0$) lies in this neighbourhood.
By the lemma, $c_{a}^{*}-c_{n}^{*}>0$ in $\mathcal{N}_{\delta(\epsilon)}$.
Because the evolutionary dynamics $V(N_{a})$ satisfy PC property, we have $V(N_{a})<0$ in $\mathcal{N}_{\delta(\epsilon)}$.
Thus, $N_{a}(u)$ always moves in the decreasing direction and never leaves $\mathcal{N}_{\delta(\epsilon)}$ for any $u\ge0$.
By definition, $N_{a0}^{AC*}$ is therefore Lyapunov stable.

We next show that the equilibrium is attractive.
Because $V(N_{a}) < 0$ throughout the neighbourhood $\mathcal{N}_{\delta(\epsilon)}$, the trajectory $N_{a}(u)$ is strictly decreasing.
Since $0 < N_{a}(u)\leq N_{a}(0) < \delta(\epsilon)$, the state is also bounded below by $0$.
A strictly decreasing function that is bounded below must converge, so the limit
\begin{align}
	L:=\lim_{u\to\infty}N_{a}(u)
\end{align}
exists and satisfies $0\leq L \leq \delta(\epsilon)$.
Continuity of $V$ implies
\begin{align}
	\lim_{u \to \infty}\dot{N}_{a}(u) = V(L).
\end{align} 
If $V(L)\neq 0$, the derivative keeps a fixed sign for all sufficiently large $u$;
the state would therefore continue decreasing indefinitely and could not settle at a finite limit, contradicting the existence of $L$.
Hence $V(L) = 0$, and NS property implies that $L = N_{a0}^{AC*}$.

$N_{a0}^{AC*}$ is therefore attractive.
Combined with the Lyapunov stability, this completes the proof that the equilibrium is aymptotically stable.

%
%
%


\subsection{Stability of $N_{a2}^{AC*}$}
We first show the following lemma:
\begin{lemm}
	There exists $\bar\delta>0$ such that
	\begin{align}
		&
		N_{a2}^{AC*}<N_{a}<N_{a2}^{AC*}+\bar\delta
		\;\Longrightarrow\;
		c_{a}^{*}-c_{n}^{*}>0,\\
		&
		N_{a2}^{AC*} - \bar\delta<N_{a}<N_{a2}^{AC*}
		\;\Longrightarrow\;
		c_{a}^{*}-c_{n}^{*}<0.
	\end{align}
\end{lemm}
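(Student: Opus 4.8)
The plan is to reduce the entire claim to the sign of a single explicit scalar function. First I would write the mode-choice cost gap \emph{off} equilibrium in closed form. Substituting the AC-pricing fare $p = m + F_{a}/N_{a}$ together with $N_{n} = N - N_{a}$ into the departure-time equilibrium costs \eqref{Eq:DepartEquiCost-normal}--\eqref{Eq:DepartEquiCost-autonomous}—which is exactly the computation already carried out for the $N_{a0}^{AC*}$ lemma—yields
\begin{align}
	g(N_{a}) := c_{a}^{*} - c_{n}^{*} = A N_{a} + \frac{F_{a}}{N_{a}} - (AN - B).
\end{align}
This is the function whose sign must be controlled on each side of $N_{a2}^{AC*}$.

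The key observation is that, since $N_{a} > 0$, the sign of $g$ coincides with the sign of
\begin{align}
	\phi(N_{a}) := N_{a}\, g(N_{a}) = A N_{a}^{2} - (AN - B) N_{a} + F_{a},
\end{align}
which is precisely the quadratic whose positive roots, by the derivation behind \textbf{Proposition~\ref{Prop:Equi_ACP}}, are $N_{a1}^{AC*}$ and $N_{a2}^{AC*}$. Because the leading coefficient $A$ is positive, $\phi$ is an upward-opening parabola, hence strictly negative on the open interval $(N_{a1}^{AC*}, N_{a2}^{AC*})$ and strictly positive for $N_{a} > N_{a2}^{AC*}$.

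The final step is to fix the neighbourhood radius. Under the strict form of Condition~\eqref{Eq:Cond_NonUnique} the two roots are distinct, with gap $N_{a2}^{AC*} - N_{a1}^{AC*} = K/A > 0$, where $K = \sqrt{(AN-B)^{2} - 4AF_{a}}$. Choosing any $\bar\delta \in (0, K/A)$ guarantees that the left interval $(N_{a2}^{AC*} - \bar\delta, N_{a2}^{AC*})$ stays strictly inside $(N_{a1}^{AC*}, N_{a2}^{AC*})$, where $\phi < 0$ and hence $g = c_{a}^{*} - c_{n}^{*} < 0$, while the right interval $(N_{a2}^{AC*}, N_{a2}^{AC*} + \bar\delta)$ lies where $\phi > 0$ and hence $g > 0$. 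Both sign statements then follow immediately.

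There is no deep obstacle here; the only point requiring genuine care is selecting $\bar\delta$ strictly below the root gap, which is exactly where the strict inequality in Condition~\eqref{Eq:Cond_NonUnique} is indispensable—without it the two roots merge and no left-neighbourhood with $g < 0$ exists. As a consistency check I would verify the upward crossing directly: $g'(N_{a2}^{AC*}) = A - F_{a}/(N_{a2}^{AC*})^{2} > 0$, since the product of roots satisfies $N_{a1}^{AC*} N_{a2}^{AC*} = F_{a}/A$ and $N_{a2}^{AC*} > N_{a1}^{AC*}$ together force $(N_{a2}^{AC*})^{2} > F_{a}/A$. This confirms that $g$ crosses zero upward at $N_{a2}^{AC*}$, in line with the sign pattern and with the subsequent local asymptotic stability argument for $N_{a2}^{AC*}$.
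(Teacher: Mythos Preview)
Your proof is correct and follows essentially the same approach as the paper: both reduce the claim to the sign of $g(N_a)=AN_a+F_a/N_a-(AN-B)$ near $N_{a2}^{AC*}$, and your explicit bound $\bar\delta<K/A$ coincides with the paper's implicit requirement $A\delta<\sqrt{(AN-B)^2-4AF_a}$. The only difference is packaging---the paper substitutes $N_a=N_{a2}^{AC*}\pm\delta$ and simplifies the resulting fraction directly, whereas you clear denominators and read the sign off the factored quadratic $\phi(N_a)=A(N_a-N_{a1}^{AC*})(N_a-N_{a2}^{AC*})$.
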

\begin{proof}
	Consider a sufficiently small $\delta$ satisfying $0 < \delta < \bar\delta$.
	Suppose first that $N_{a}$ is within the range $N_{a2}^{AC*}<N_{a}<N_{a2}^{AC*}+\bar\delta$: $N_{a} = N_{a2}^{AC*} + \delta$.
	Substituting this into $c_{a}^{*}-c_{n}^{*}$, we have the following relation:
	\begin{align}
		c_{a}^{*}-c_{n}^{*} = \cfrac{\delta (2AN_{a2}^{AC*} + A \delta - (AN-B))}{N_{a2}^{AC*}+\delta}
		= \cfrac{\delta (\sqrt{(AN-B)^{2}- 4AF_{a}} + A \delta)}{N_{a2}^{AC*}+\delta}
		 > 0.
	\end{align}

	Consider next that $N_{a}= N_{a2}^{AC*} - \delta$, i.e., $N_{a}$ is within the range $N_{a2}^{AC*}-\bar\delta<N_{a}<N_{a2}^{AC*}$.
	Substituting this into $c_{a}^{*}-c_{n}^{*}$, we have
	\begin{align}
		c_{a}^{*}-c_{n}^{*} = \cfrac{-\delta (2AN_{a2}^{AC*} - A \delta - (AN-B))}{N_{a2}^{AC*}+\bar\delta} 
		= \cfrac{-\delta (\sqrt{(AN-B)^{2}- 4AF_{a}} - A \delta)}{N_{a2}^{AC*}+\delta}.
	\end{align}
	This implies that $c_{a}^{*}-c_{n}^{*} < 0$ if $\sqrt{(AN-B)^{2}- 4AF_{a}} - A \delta > 0$.
	It is obvious that this condition holds for an arbitrary $\delta$ if $\bar{\delta}$ is chosen sufficiently small.
	The lemma is thus proved.
\end{proof}

We first show Lyapunov stability.
Fix an arbitrary $\epsilon>0$ and define
\begin{align}
	\delta(\varepsilon):=\min\{\varepsilon,\bar\delta\},\qquad
	\mathcal N_{\delta(\varepsilon)}
	:=\{N_{a}\mid |N_{a}-N_{a2}^{AC*}|<\delta(\varepsilon)\}.
\end{align}

Inside $\mathcal N_{\delta(\varepsilon)}$, the lemma and PC property suggest that
\begin{align}
	\begin{cases}
		N_{a}>N_{a2}^{AC*}\; &\Rightarrow\; c_{a}^{*}-c_{n}^{*} > 0 \;\Rightarrow\; V(N_{a})<0,\\[3pt]
		N_{a}<N_{a2}^{AC*}\; &\Rightarrow\; c_{a}^{*}-c_{n}^{*} < 0\;\Rightarrow\; V(N_{a})>0.
	\end{cases}
\end{align}
Hence, $N_{a}(u)$ always approaches $N_{a2}^{AC*}$ and never leaves $\mathcal{N}_{\delta(\epsilon)}$ for any $u \geq 0$ if the initial value lies in the neighbourhood.
By definition, $N_{a2}^{AC*}$ is therefore Lyapunov stable.

We next prove that the equilibrium is attractive.
Let an initial state $N_{a}(0)$ be chosen in the neighbourhood $\mathcal{N}_{\delta(\epsilon)}$.
Because the lemma implies $V(N_{a}) < 0$ when $N_{a} > N_{a2}^{AC*}$ and $V(N_{a}) > 0$ when $N_{a} < N_{a2}^{AC*}$ in $\mathcal{N}_{\delta(\epsilon)}$, the trajectory $N_{a}(u)$ is strictly decreasing on the right of the equilibrium and strictly increasing on its left.
In either case, $N_{a}(u)$ stays in $\mathcal{N}_{\delta(\epsilon)}$ and is bounded between the two constants $N_{a2}^{AC*} - \delta(\epsilon)$ and $N_{a2}^{AC*} + \delta(\epsilon)$.
A monotone and bounded function must converges, so the limit
\begin{align}
	L:=\lim_{u\to\infty}N_{a}(u)
\end{align}
exists.
The continuity yields
\begin{align}
	\lim_{u \to \infty}\dot{N}_{a}(u) = V(L).
\end{align}
If $L > N_{a2}^{AC*}$, then $V(L)$ would be negative and the derivative would remain negative; the state continues decreasing past $L$, contradicting the definition of the limit.
The symmetric contradiction arises if $L < N_{a2}^{AC*}$.
Hence, neither inequality is possible and we must have $L = N_{a2}^{AC*}$.

Because trajectories converge to $N_{a2}^{AC*}$, the equilibrium is attractive.
Combined with the Lyapunov stability, this completes the proof that the equilibrium is aymptotically stable.

\subsection{Instability of $N_{a1}^{AC*}$}
We first show the following lemma:
\begin{lemm}
	For a sufficiently small $\bar\delta>0$, the following relations hold:
	\begin{align}
		&
		N_{a1}^{AC*}<N_{a}<N_{a1}^{AC*}+\bar\delta
		\;\Longrightarrow\;
		c_{a}^{*}-c_{n}^{*} < 0,\label{Eq:App-Instability1}\\
		&
		N_{a1}^{AC*} - \bar\delta<N_{a}<N_{a1}^{AC*}
		\;\Longrightarrow\;
		c_{a}^{*}-c_{n}^{*} > 0.\label{Eq:App-Instability2}
	\end{align}
\end{lemm}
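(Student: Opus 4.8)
The plan is to reuse the explicit expression for the cost gap obtained in the proof of the $N_{a0}^{AC*}$ lemma and exploit its algebraic link to the defining quadratic. Under AC pricing the cost gap can be written, for any $N_{a}>0$, as
\begin{align}
	c_{a}^{*}-c_{n}^{*} = AN_{a} + \cfrac{F_{a}}{N_{a}} - (AN-B).
\end{align}
Multiplying through by $N_{a}>0$ recovers precisely the quadratic $AN_{a}^{2}-(AN-B)N_{a}+F_{a}$ whose roots are $N_{a1}^{AC*}$ and $N_{a2}^{AC*}$ (from \Cref{Prop:Equi_ACP}). Hence I would factor
\begin{align}
	c_{a}^{*}-c_{n}^{*} = \cfrac{A}{N_{a}}\bigl(N_{a}-N_{a1}^{AC*}\bigr)\bigl(N_{a}-N_{a2}^{AC*}\bigr),
\end{align}
which turns the sign question into a matter of where $N_{a}$ sits relative to the two roots.

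First I would record the root gap $N_{a1}^{AC*}-N_{a2}^{AC*} = -K/A$, where $K:=\sqrt{(AN-B)^{2}-4AF_{a}}>0$ under the strict discriminant assumption. Substituting $N_{a}=N_{a1}^{AC*}+\delta$ for small $\delta\in(0,\bar\delta)$ then gives
\begin{align}
	c_{a}^{*}-c_{n}^{*} = \cfrac{\delta\,(A\delta-K)}{N_{a1}^{AC*}+\delta},
\end{align}
which is negative once $\bar\delta$ is chosen small enough that $A\bar\delta<K$; this establishes \eqref{Eq:App-Instability1}. Symmetrically, substituting $N_{a}=N_{a1}^{AC*}-\delta$ yields
\begin{align}
	c_{a}^{*}-c_{n}^{*} = \cfrac{\delta\,(K+A\delta)}{N_{a1}^{AC*}-\delta},
\end{align}
which is positive for small $\delta$, proving \eqref{Eq:App-Instability2}. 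Note that the two inequalities are exactly reversed relative to the $N_{a2}^{AC*}$ lemma, which is the analytic signature of instability.

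The only delicate point is the choice of the threshold $\bar\delta$: it must simultaneously keep $N_{a}$ strictly positive so the factored expression is valid, keep $N_{a1}^{AC*}+\delta$ strictly below $N_{a2}^{AC*}$ so that the middle factor does not change sign, and guarantee $A\bar\delta<K$ so that $A\delta-K<0$ on the entire right-hand neighbourhood. All three requirements reduce to taking $\bar\delta<\min\{N_{a1}^{AC*},\,K/A\}$, which is strictly positive precisely because the discriminant is strict; this is where the hypothesis $K>0$ is essential. Beyond this bookkeeping, the argument is a direct sign computation with no substantive obstacle.
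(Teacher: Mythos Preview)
Your proposal is correct and follows essentially the same approach as the paper: both compute $c_{a}^{*}-c_{n}^{*}$ explicitly at $N_{a}=N_{a1}^{AC*}\pm\delta$, obtain the identical expressions $\delta(A\delta-K)/(N_{a1}^{AC*}+\delta)$ and $\delta(A\delta+K)/(N_{a1}^{AC*}-\delta)$, and conclude by taking $\bar\delta<\min\{N_{a1}^{AC*},\,K/A\}$. Your intermediate factorization via the quadratic roots is a nice clarifying touch, but the substance of the argument is the same.
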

\begin{proof}
	Substituting $N_{a1}^{AC*} + \bar\delta$, we have
	\begin{align}
		c_{a}^{*}-c_{n}^{*} = \cfrac{\bar\delta (A \bar\delta - \sqrt{(AN-B)^{2}- 4AF_{a}} )}{N_{a1}^{AC*}+ \bar\delta},
	\end{align}
	which means that $c_{a}^{*}-c_{n}^{*} < 0$ if $\bar\delta < \sqrt{(AN-B)^{2}- 4AF_{a}}/A$.
	
	Meanwhile, substituting $N_{a1}^{AC*} - \bar\delta$, we have
	\begin{align}
		c_{a}^{*}-c_{n}^{*} = \cfrac{ \bar\delta (A \bar\delta + \sqrt{(AN-B)^{2}- 4AF_{a}} )}{N_{a1}^{AC*} - \bar\delta},
	\end{align}
	which means that $c_{a}^{*}-c_{n}^{*} < 0$ if $\bar\delta < N_{a1}^{AC*}$.
	
	Therefore, for $\bar\delta:=\min\{\sqrt{(AN-B)^{2}- 4AF_{a}}/A, N_{a1}^{AC*}\} $, the relations~\eqref{Eq:App-Instability1} and \eqref{Eq:App-Instability2} hold.
\end{proof}

This lemma obviously suggests that $N_{a1}^{AC*}$ is no longer Lyapunov stable.
Define the neighborhood
\begin{align}
	\mathcal N_{\bar\delta}
	:=\{N_{a}\mid |N_{a}-N_{a1}^{AC*}|<\bar\delta\}.
\end{align}
The lemma and PC property implies that $V(N_{a}) > 0$ when $N_{a} > N_{a1}^{AC*}$ and $V(N_{a}) < 0$ when $N_{a} < N_{a1}^{AC*}$ in $\mathcal N_{\bar\delta}$.
Therefore, $N_{a}(u)$ never approaches $N_{a1}^{AC*}$ once the traffic state deviates from the equilibrium.
Since the equilibrium is not Lyapunov stable, it is not asymptotically stable.
Note that this analysis is valid when $(AN-B)^{2} - 4AF_{a} = 0$: $N_{a1}^{AC*} = N_{a2}^{AC*}$ and the equilibrium becomes unstable.

\section{Temporal-sorting property under the first-best policy}\label{Sec:App-TemporalSortingFB}
We first obtain the following lemma.
\begin{lemm}\label{Lemm:SortingProperty_Base}
	Let
	\(
	  \mathcal T
	  :=\left\{t\in\mathbb R \,\middle|\, n_{n}(t)>0\ \text{and}\ n_{a}(t)>0\right\}.
	\)
	Then the Lebesgue measure of \(\mathcal T\) is zero; that is, there is no time interval of positive length in which NV and SAV flows are both strictly positive.
\end{lemm}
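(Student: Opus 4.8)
The plan is to argue by contradiction. Suppose the set $\mathcal{T}$ had positive Lebesgue measure. On $\mathcal{T}$ both $n_{n}(t)>0$ and $n_{a}(t)>0$, so the equilibrium conditions \eqref{Eq:FirstBest_Optimality1} and \eqref{Eq:FirstBest_Optimality2} must each hold with equality: $c_{n}^{FB}(t)=c^{FB*}$ and $c_{a}^{FB}(t)=c^{FB*}$ for every $t\in\mathcal{T}$. The first step is therefore to exploit these two simultaneous equalities to constrain the toll and the schedule-delay profile on $\mathcal{T}$.

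The key observation is that subtracting the two equalities eliminates both the common equilibrium cost $c^{FB*}$ and the schedule-delay term $s(t)$, which enters $c_{n}^{FB}$ and $c_{a}^{FB}$ identically. Using the expressions for the first-best commuting costs, I would obtain $(t_{f}+F_{n})-(\theta t_{f}+m)+(1-\kappa)\tau(t)=0$, which, recalling $B=\theta t_{f}+m-(t_{f}+F_{n})$, pins the toll down to a single constant value
\begin{equation}
\tau(t)=\frac{B}{1-\kappa}\qquad\text{for all }t\in\mathcal{T}.
\end{equation}
Substituting this constant back into the NV equality $c_{n}^{FB}(t)=c^{FB*}$ then forces $s(t)=c^{FB*}-t_{f}-F_{n}-B/(1-\kappa)$, which is again a single constant independent of $t$. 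Hence $s(t)$ must take one fixed value everywhere on $\mathcal{T}$.

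The final step is to derive the contradiction from the structure of $s(t)$. Since $s$ is piecewise linear with strictly negative slope $-\beta$ on $(-\infty,0)$ and strictly positive slope $\gamma$ on $[0,\infty)$, it is strictly monotone on each of these two half-lines; consequently the level set $\{t:s(t)=\text{const}\}$ contains at most two points and thus has Lebesgue measure zero. This contradicts the assumption that $\mathcal{T}$ has positive measure, completing the argument. I expect the proof to be short and essentially mechanical once the subtraction is performed; the only point requiring care is the justification that the two cost-equality conditions genuinely hold pointwise on all of $\mathcal{T}$, which follows directly because \eqref{Eq:FirstBest_Optimality1} and \eqref{Eq:FirstBest_Optimality2} are imposed for every $t\in\mathbb{R}$. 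Notably, no appeal to the capacity condition \eqref{Eq:FirstBest_Optimality3} is needed, since the elimination of $s(t)$ already over-determines the schedule-delay profile.
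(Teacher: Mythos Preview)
Your proof is correct and follows essentially the same route as the paper: both use the two active equalities from \eqref{Eq:FirstBest_Optimality1}--\eqref{Eq:FirstBest_Optimality2} to force $s(t)$ to be constant on $\mathcal{T}$, then observe that the level set of the piecewise-linear schedule cost has at most two points and hence measure zero. The only cosmetic difference is that the paper eliminates $\tau(t)$ directly to pin down $s(t)$, whereas you first eliminate $s(t)$ to pin down $\tau(t)$ and then substitute back; the underlying linear algebra is identical.
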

\begin{proof}
	Suppose that, for some \(\bar t\), \(n_{n}(\bar t)>0\) and \(n_{a}(\bar t)>0\).  
	Optimality conditions \eqref{Eq:FirstBest_Optimality1}--\eqref{Eq:FirstBest_Optimality2} give
	\[
		s(\bar t)+t_f+F_n+\tau(\bar t)=c^{FB*},\qquad
	  	s(\bar t)+\theta t_f+m  + w   +\kappa\tau(\bar t)=c^{FB*}.
	\]
	Eliminating \(\tau(\bar t)\) yields  
	\[
		(1-\kappa)\,s(\bar t)
		= (1-\kappa)c^{FB*}-\bigl[\theta t_f+m  + w -\kappa(t_f+F_n)\bigr],
	\]
	so \(s(\bar t)\) must equal a constant.  
	Because $s(\cdot)$ is continuous and strictly monotone on each of $(-\infty,0]$ and $[0,\infty)$, for any $\ell\in\mathbb R$ the level set $\{t\in\mathbb R\mid s(t)=\ell\}$ contains at most one point in each half-line, and hence at most two points in total. Therefore, it has Lebesgue measure zero, and $\mathcal T$ cannot contain an interval of positive measure.
\end{proof}

\noindent The lemma implies that, at almost every time at which there is positive flow at the bottleneck, only one of the two vehicle classes has positive flow.
Note that there are isolated points $\bar{t}\in \mathcal{T}$ at which both NV and SAV flows can be positive; namely, the following condition holds:
\begin{align}
	s(\bar t)+t_f+F_n+\tau(\bar t)=
	s(\bar t)+\theta t_f+m  + w  +\kappa\tau(\bar t) = c^{FB*}.
\end{align}
Because these measure-zero points do not affect the essential flow--cost pattern, we assume that $n_{a}(\bar{t})=0$ for any $\bar{t}$.

To determine which vehicle class is flowing at time $t\in\mathbb{R}$, we subtract the left-hand sides of conditions \eqref{Eq:FirstBest_Optimality1} and \eqref{Eq:FirstBest_Optimality2} and obtain
\begin{align}
	\left[ s(t) + \theta t_{f} + m  + w  + \kappa \tau(t)  \right] - \left[s(t) + t_{f} + F_{n} + \tau(t) \right]
	= 
	\begin{cases}
		>0,\quad &\text{if}\quad \tau(t) < \cfrac{B}{1-\kappa},\\
		0,\quad &\text{if}\quad \tau(t) = \cfrac{B}{1-\kappa},\\
		<0,\quad &\text{if}\quad \tau(t) > \cfrac{B}{1-\kappa}.
	\end{cases}
\end{align}

Combining this with the complementarity conditions~\eqref{Eq:FirstBest_Optimality1} and \eqref{Eq:FirstBest_Optimality2}, we have the following relationships:
\begin{itemize}
	\item If $n_{n}(t) > 0$, then
		\begin{align}
			\left[ s(t) + \theta t_{f} + m  + w  + \kappa \tau(t)  \right] - \left[s(t) + t_{f} + F_{n} + \tau(t) \right] \geq 0.
		\end{align}
		We thus have $\tau(t) \leq B/(1-\kappa)$.
		We also have $s(t) + t_{f} + F_{n} + \tau(t) = c^{FB*}$.
		Therefore,
		\begin{align}
			s(t) = c^{FB*} - (t_{f} + F_{n} + \tau(t)) \geq 
			c^{FB*} - (t_{f} + F_{n}) - \cfrac{B}{1-\kappa}.
		\end{align}
		
	\item If $n_{a}(t) > 0$, then
		\begin{align}
			\left[s(t) + t_{f} + F_{n} + \tau(t) \right] - \left[ s(t) + \theta t_{f} + m  + w  + \kappa \tau(t)  \right] > 0.
		\end{align}
		We thus have $\tau(t) > B/(1-\kappa)$.
		We also have $s(t) + \theta t_{f} + m  + w  + \kappa \tau(t) = c^{FB*}$.
		Therefore,
		\begin{align}
			s(t) = c^{FB*} - (B + t_{f} + F_{n}  + \kappa \tau(t)) < 
			c^{FB*} - (t_{f} + F_{n}) - \cfrac{B}{1-\kappa}.
		\end{align}
\end{itemize}
In summary, we have the following relationship:
\begin{align}
s(t) 
\begin{cases}
	\geq c^{FB*} - (t_{f} + F_{n}) - \cfrac{B}{1-\kappa}\quad &\text{if}\quad n_{n}(t) > 0,\\
	< c^{FB*} - (t_{f} + F_{n}) - \cfrac{B}{1-\kappa}\quad &\text{if}\quad n_{a}(t) > 0.
\end{cases}\label{Eq:ScheduleRelation_FirstBest}
\end{align}
This shows the temporal-sorting property under the first-best policy.

\section{Closed-form solution of the first-best equilibrium}\label{Sec:App-ClosedFormSolution_FirstBest}
First of all, observing the relation in Eq.~\eqref{Eq:ScheduleRelation_FirstBest}, we derive the following corollary:
\begin{coro}
	In the first-best equilibrium, a flow pattern consisting solely of SAVs cannot occur; some NV traffic must always be present.	
\end{coro}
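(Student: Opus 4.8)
The plan is to argue by contradiction: I assume the first-best equilibrium consists solely of SAVs, so that $n_{n}(t)=0$ for all $t$ while SAVs serve all $N$ commuters, and I derive a violation of the optimality conditions \eqref{Eq:FirstBest_Optimality1}--\eqref{Eq:FirstBest_Optimality4}. The core idea is that, because queues are eliminated in the first best, an SAV is attractive only when the vehicle-specific toll is large enough to compensate for the SAV's higher marginal-cost gap $B>0$; yet the optimal toll must vanish at the edges of the rush hour, where SAVs would therefore be dominated by NVs.

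First I would translate mode preference into a condition on the toll. Subtracting the two first-best commuting costs (with queues eliminated) gives
\begin{align}
	c_{a}^{FB}(t)-c_{n}^{FB}(t)=B-(1-\kappa)\tau(t),
\end{align}
so an SAV is weakly preferred at time $t$ precisely when $\tau(t)\geq B/(1-\kappa)$. Since Assumption~1 guarantees $B>0$ and the model imposes $0<\kappa<1$, this threshold is strictly positive. Consequently, at every instant carrying positive SAV flow, the no-deviation condition \eqref{Eq:FirstBest_Optimality1} for NVs, namely $c_{n}^{FB}(t)\geq c^{FB*}=c_{a}^{FB}(t)$, forces
\begin{align}
	\tau(t)\geq\cfrac{B}{1-\kappa}>0.
\end{align}

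Next I would confront this lower bound with the behaviour of the optimal toll at the ends of the active period. In the all-SAV configuration the bottleneck is active on an interval $[t^{-},t^{+}]$ of strictly positive length, because the finite SAV capacity $\mu/\kappa$ cannot clear all $N$ commuters instantaneously, and $s(\cdot)$ attains its rush-hour maximum at an endpoint. A standard feature of first-best bottleneck tolling is that the toll vanishes at such endpoints, since no queue has yet formed there (or has just dissipated); equivalently $\tau(t^{\pm})=0$. But then $\tau(t^{\pm})=0<B/(1-\kappa)$, which by the previous step rules out SAV flow at $t^{\pm}$ and contradicts the assumption that SAVs are used throughout $[t^{-},t^{+}]$. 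Hence some NV traffic must be present, as claimed.

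The step I expect to be the main obstacle is justifying that the optimal toll vanishes at the boundary of the active interval, since the whole contradiction hinges on it. I would establish this directly from the complementarity condition \eqref{Eq:FirstBest_Optimality3} together with continuity of the optimal toll: $\tau$ is zero outside the active interval, and being continuous it must equal zero at the endpoints. Alternatively, I could route the argument entirely through the already-derived temporal-sorting relation \eqref{Eq:ScheduleRelation_FirstBest}: evaluating it at the maximal-$s$ endpoint and inserting the boundary value $c^{FB*}=s(t^{\pm})+\theta t_{f}+m$ (which follows from $\tau(t^{\pm})=0$) reduces the required strict inequality to $B\kappa/(1-\kappa)<0$, which is impossible for $B>0$ and $0<\kappa<1$.
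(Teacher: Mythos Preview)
Your proposal is correct and is essentially the paper's own argument. The paper states the corollary as an immediate consequence of the schedule-cost relation \eqref{Eq:ScheduleRelation_FirstBest}, but that relation was itself derived from exactly the toll-threshold observation you make, namely that $n_{a}(t)>0$ forces $\tau(t)>B/(1-\kappa)$; your boundary argument that $\tau$ must vanish at the edges of the rush hour (via \eqref{Eq:FirstBest_Optimality3} and continuity) is the same mechanism the paper invokes informally in the main text when it notes that ``during periods with low tolls \ldots\ commuters have an incentive to choose NVs,'' and your alternative route through \eqref{Eq:ScheduleRelation_FirstBest} reproduces the paper's formal derivation verbatim.
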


\noindent This corollary shows that the first-best equilibrium is either (i) a mixed NV-SAV pattern or (ii) an all-NV pattern.
We then derive the closed-form expressions for each case separately.

\subsection{Case (i): mixture of normal and shared autonomous vehicles}
In this case, the flows of NV and SAV commuters alternate at the instants when the schedule cost reaches a prescribed value.
From the complementarity conditions~\eqref{Eq:FirstBest_Optimality1} and ~\eqref{Eq:FirstBest_Optimality2} and the schedule-cost relation~\eqref{Eq:ScheduleRelation_FirstBest}, we define the following critical times:
\begin{align}
	&t^{-}_{n} = \argmin_{t\in\mathbb{R}}\left\{\, t \mid s(t) = c^{FB*} - (t_{f} + F_{n}) \right\},\quad 
	t^{+}_{n} = \argmax_{t\in\mathbb{R}}\left\{\, t \mid s(t) = c^{FB*} - (t_{f} + F_{n}) \right\},\\
	&t^{-}_{a} = \argmin_{t\in\mathbb{R}}\left\{\, t \mid s(t) = c^{FB*} - (t_{f} + F_{n}) - \cfrac{B}{1-\kappa} \right\},\quad 
	t^{+}_{a} = \argmax_{t\in\mathbb{R}}\left\{\, t \mid s(t) = c^{FB*} - (t_{f} + F_{n}) - \cfrac{B}{1-\kappa} \right\},
\end{align}
where $t_{n}^{-}$ and $t_{n}^{+}$ are the earliest and latest destination arrival times of NV commuters; $t_{a}^{-}$ and $t_{a}^{+}$ are those of SAV commuters.

We obtain the following lemma about the flow pattern in the first-best equilibrium:
\begin{lemm}
	The patterns of destination arrival flow of NV and SAV commuters are given as follows:
	\begin{align}
		&n^{*}_{n}(t) = 
		\begin{cases}
			\mu	\quad &t\in(t^{-}_{n}, t_{a}^{-}]\quad \text{or}\quad t\in[t^{+}_{a}, t_{n}^{+})\\
			0	\quad &\text{otherwise}
		\end{cases}\\
		&n^{*}_{a}(t) = 
		\begin{cases}
			\mu/ \kappa	\quad &t\in(t^{-}_{a}, t_{a}^{+})\\
			0	\quad \text{otherwise}
		\end{cases}
	\end{align}
\end{lemm}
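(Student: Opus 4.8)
The plan is to read the flow pattern directly off the temporal-sorting relation~\eqref{Eq:ScheduleRelation_FirstBest}, the V-shape of the schedule-delay function $s$, and the capacity complementarity condition~\eqref{Eq:FirstBest_Optimality3}. First I would set the two threshold levels $L_{n}:=c^{FB*}-(t_{f}+F_{n})$ and $L_{a}:=L_{n}-B/(1-\kappa)$, so that $t_{n}^{\pm}$ solve $s(t)=L_{n}$ and $t_{a}^{\pm}$ solve $s(t)=L_{a}$. Since $B>0$ and $0<\kappa<1$ give $L_{a}<L_{n}$, and since $s$ is strictly convex, continuous, and minimized at $t=0$, each level set consists of exactly two points straddling the origin, yielding the ordering $t_{n}^{-}<t_{a}^{-}<0<t_{a}^{+}<t_{n}^{+}$. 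Relation~\eqref{Eq:ScheduleRelation_FirstBest} then localizes the modes: SAV flow is possible only where $s(t)<L_{a}$, i.e.\ on $(t_{a}^{-},t_{a}^{+})$, while NV flow is confined to $\{s(t)\ge L_{a}\}$; together with toll non-negativity, which via the NV cost equality forces $s(t)\le L_{n}$ wherever $n_{n}(t)>0$, NV flow is restricted to $[t_{n}^{-},t_{a}^{-}]\cup[t_{a}^{+},t_{n}^{+}]$.

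Next I would fix the flow rates by showing that the bottleneck constraint binds on the interior of the active window. Solving the active cost equalities for the toll gives $\tau(t)=L_{n}-s(t)$ on an NV arc and $\tau(t)=[c^{FB*}-s(t)-\theta t_{f}-m]/\kappa$ on the SAV arc; using $B=\theta t_{f}+m-(t_{f}+F_{n})$ one checks that both expressions equal $B/(1-\kappa)>0$ at the interfaces $t_{a}^{\pm}$, that the NV toll is strictly positive on $(t_{n}^{-},t_{a}^{-})\cup(t_{a}^{+},t_{n}^{+})$, and that the SAV toll exceeds $B/(1-\kappa)$ on $(t_{a}^{-},t_{a}^{+})$. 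Hence $\tau(t)>0$ throughout the interior, so by~\eqref{Eq:FirstBest_Optimality3} the effective flow satisfies $n_{n}(t)+\kappa n_{a}(t)=\mu$. Imposing $n_{a}=0$ on the NV arcs yields $n_{n}=\mu$, and imposing $n_{n}=0$ on the SAV arc yields $n_{a}=\mu/\kappa$, which is exactly the asserted pattern; the measure-zero interfaces $t_{a}^{\pm}$ are assigned to NV per the tie-breaking convention already adopted in \Cref{Sec:App-TemporalSortingFB}.

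The step I expect to be the main obstacle is ruling out interior idling, which is what upgrades the above containments into equalities so that the flow actually fills each arc. Suppose some $\hat t\in(t_{n}^{-},t_{n}^{+})$ carried no flow; then the effective flow is zero, the capacity constraint is slack, and~\eqref{Eq:FirstBest_Optimality3} forces $\tau(\hat t)=0$, so an NV arriving at $\hat t$ would incur cost $s(\hat t)+t_{f}+F_{n}<L_{n}+t_{f}+F_{n}=c^{FB*}$ because $s(\hat t)<L_{n}$ in the interior. This violates~\eqref{Eq:FirstBest_Optimality1}, which requires cost at least $c^{FB*}$ wherever $n_{n}=0$, so no gap exists and the active window is exactly $(t_{n}^{-},t_{n}^{+})$. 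Combined with the exclusivity supplied by~\eqref{Eq:ScheduleRelation_FirstBest}, this forces SAV to fill the entire central band $(t_{a}^{-},t_{a}^{+})$ and NV to fill the two wings. The only remaining task---verifying that these arcs at the stated rates accommodate all $N$ commuters---is the flow-conservation identity that simultaneously pins down $c^{FB*}$, and is routine once the pattern is in place.
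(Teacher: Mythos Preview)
Your proposal is correct and follows essentially the same route as the paper's proof: both rely on the sorting relation~\eqref{Eq:ScheduleRelation_FirstBest} to localize the modes and on the contradiction that a slack capacity constraint forces $\tau=0$ and hence an NV cost strictly below $c^{FB*}$, violating~\eqref{Eq:FirstBest_Optimality1}. The only cosmetic difference is organizational---the paper argues region by region (outside the window, then the NV wings, then the SAV center), whereas you separate by concern (containments, then toll computation, then the no-idling step)---but the logical content is identical.
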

\begin{proof}
	First, consider the time periods $t\leq t_{n}^{-}$ or $t\geq t_{n}^{+}$.
	During these periods, we have
	\begin{align}
		s(t) \geq c^{FB*} - (t+F_{n}).
	\end{align}
	Because $B>0$, the left-hand side of the complementartity condition~\eqref{Eq:FirstBest_Optimality2} can be rewitten as follows:
	\begin{align}
		s(t) + \theta t_{f} + m  + w  + \kappa \tau(t) > s(t) + t_{f} + F_{n} + \kappa \tau(t) \geq c^{FB*} + \kappa \tau(t) \geq  c^{FB*}.
	\end{align}
	Therefore, the complementarity condition suggests that $n_{a}(t) = 0$.
	Moreover, the left-hand side of the the complementartity condition~\eqref{Eq:FirstBest_Optimality1} can be rewitten as follows:
	\begin{align}
		s(t) + t_{f} + F_{n} + \tau(t) \geq c^{FB*} + \tau(t) \geq c^{FB*},
	\end{align}
	and equality holds only when $t=t^{-}_{n}$ or $t=t^{+}_{n}$.
	Hence, for $t < t^{-}_{n}$ and $t > t_{n}^{+}$, we obtain $n_{n}(t) = 0$ from the complementarity condition.
	At the isolated points $t = t_{n}^{-}$, $t_{n}^{+}$, we can set $n_{n}(t) = 0$ without affecting the subsequent analysis.
	Therefore, in thse periods, $n_{n}(t) = n_{a}(t) = 0$.

	Next, consider the time periods $t^{-}_{n} < t \leq t^{-}_{a}$ and $t^{+}_{a} \leq t < t^{+}_{n}$.
	During these periods, we have
	\begin{align}
		c^{FB*} - (t+F_{n}) > s(t) \geq c^{FB*} - (t_{f} + F_{n}) - \cfrac{B}{(1-\kappa)}.
	\end{align}
	From Eq.~\eqref{Eq:ScheduleRelation_FirstBest}, we see that $n_{a}(t) = 0$.
	Thus, only NV flow is feasible.

	We prove that $n_{n}(t) = \mu$ by contradiction.
	Suppose that $n_{n}(t) < \mu$.
	Then, $\tau(t) = 0$.
	However, this leads to the following contradiction.
	\begin{align}
		s(t) + t_{f} + F_{n} + \tau(t)  
		= s(t) + t_{f} + F_{n}  < c^{FB*}.
	\end{align}
	Therefore, $n_{n}(t) = \mu$.

	Finally, consider the time period $t_{a}^{-} < t < t_{a}^{+}$.
	During this period, we have
	\begin{align}
		c^{FB*} - (t_{f} + F_{n}) - \cfrac{B}{(1-\kappa)} > s(t) \geq 0,
	\end{align}
	Thus, only SAV flow is feasible.
	
	We prove that $n_{a}(t) = \mu/\kappa$ by contradiction.
	Suppose that $n_{n}(t) < \mu/\kappa$.
	Then, $\tau(t) = 0$.
	However, this leads to the following contradiction.
	\begin{align}
		s(t) + \theta t_{f} + m + w  + \kappa \tau(t)  
		= s(t) + \theta t_{f} + m  + w   < c^{FB*} + B-\cfrac{B}{1-\kappa} < c^{FB*}.
	\end{align}
	Therefore, $n_{a}(t) = \mu/\kappa$.
\end{proof}


Using the destination arrival flow and the earliest/latest destination arrival times of NV and SAV commuters, their total numbers $N_{n}$ and $N_{a}$ are written as follows:
\begin{align}
	&N_{a} = \cfrac{\mu}{\kappa}(t_{a}^{+} - t_{a}^{-}) = \cfrac{\mu}{\kappa}\cfrac{\beta + \gamma}{\beta \gamma}\left[c^{FB*} - (t_{f} + F_{n}) - \cfrac{B}{1-\kappa}\right]\\
	&N_{n} = \mu[t_{n}^{+} - t_{n}^{-} -(t_{a}^{+} - t_{a}^{-}) ]
	=\mu \cfrac{\beta + \gamma}{\beta \gamma}\cfrac{B}{1-\kappa}
\end{align}
Because $N_{n} + N_{a} = N$, we have the following relationship:
\begin{align}
	N = N_{a} + N_{n}
	= \mu \cfrac{\beta + \gamma}{\beta \gamma}\left[\cfrac{c^{FB*} - (t_{f} + F_{n})}{\kappa} -  \cfrac{B}{\kappa} \right]
\end{align}
Solving this equation, we derive the Lagrange multiplier $c^{FB*}$ as follows:
\begin{align}
	c^{FB*} 
	= \cfrac{\kappa AN}{(1-\theta)} + B + t_{f} + F_{n}.
\end{align}
Substituting $c^{FB*}$ into the equation, we can derive the Lagrange multiplier $\tau(t)$.

Finally, substituting $c^{FB*}$ into the equation, we have the earliest/latest destination arrival times, as follows:
\begin{align}
	&t_{n}^{-} = - \cfrac{\kappa AN + (1-\theta)B}{\beta(1-\theta)}
	,\quad 
	t_{n}^{+} = \cfrac{\kappa AN + (1-\theta)B}{\gamma(1-\theta)},\\
	&t_{a}^{-} = - \cfrac{\kappa AN + (1-\theta)B}{\beta(1-\theta)} + \cfrac{B}{\beta (1-\kappa)},\quad 
	t_{a}^{+} = \cfrac{\kappa AN + (1-\theta)B}{\gamma(1-\theta)} - \cfrac{B}{\gamma (1-\kappa)},
\end{align}

\subsection{Case (ii): normal vehicles only}
In this case, the first-best equilibrium coincides with the dynamic system-optimal solution of the classical homogeneous bottleneck model. 
Accordingly, the optimal flows and the associated Lagrange multipliers are given as follows:
\begin{align}
	&n^{*}_{n}(t) = 
	\begin{cases}
		\mu	\quad &t\in (t^{-},t^{+}),\\
		0	\quad &\text{otherwise},
	\end{cases}\\
	&n^{*}_{a}(t) = 0,\quad \forall t\in\mathbb{R},\\
	&c^{FB*} = \cfrac{AN}{1-\theta}+ t_{f} + F_{n},
\end{align}
where $t^{-}$ and $t^{+}$ denote the earliest and latest destination arrival times of commuters, respectively.
They are represented as
\begin{align}
	t^{-} = - \cfrac{\gamma}{\beta + \gamma}\cfrac{N}{\mu},\quad t^{+} = \cfrac{\beta}{\beta + \gamma}\cfrac{N}{\mu}.
\end{align}

We finally derive the condition under which SAVs are used by examining the following relationship:
\begin{align}
c^{FB*} - (t_{f} + F_{n}) - \cfrac{B}{1-\kappa} > 0.\label{Eq:FirstBest_MixturePremise}
\end{align}
As Eq.~\eqref{Eq:ScheduleRelation_FirstBest} indicates, this inequality is a necessary and sufficient condition for the SAV flow to be strictly positive.
Substituting the expression for $c^{FB*}$ into Eq.~\eqref{Eq:FirstBest_MixturePremise}, we derive the condition \eqref{Eq:FirstBest_PopulationThres}.

\section{Pareto improvement property of the first-best policy}\label{Sec:App-ParetoImprovement}
First of all, using the index $\eta$, we can determine which case arises in the first-best equilibrium:
\begin{prop}
If $\eta>1$, the first-best equilibrium necessarily consists of both NV and SAV commuters; that is, case (i) arises.
\end{prop}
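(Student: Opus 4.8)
The plan is to show that the hypothesis $\eta>1$ forces the population threshold of Case~(i)—namely Condition~\eqref{Eq:FirstBest_PopulationThres}—to hold automatically, so that the all-NV configuration of Case~(ii) is ruled out. The entire argument reduces to rewriting that threshold in terms of the composite parameters $A$ and $\eta$ and then invoking the standing assumption, Condition~\eqref{Eq:Cond_NonExtreme}.

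First I would re-express the right-hand side of Condition~\eqref{Eq:FirstBest_PopulationThres}. Using the definition $A=\beta\gamma(1-\theta)/[(\beta+\gamma)\mu]$, one has $(\beta+\gamma)\mu/(\beta\gamma)=(1-\theta)/A$, so the threshold becomes
\begin{align}
	\cfrac{B}{1-\kappa}\cdot\cfrac{(\beta+\gamma)\mu}{\beta\gamma}
	= \cfrac{B(1-\theta)}{(1-\kappa)A}
	= \cfrac{B}{\eta A},
\end{align}
where the final equality uses $\eta=(1-\kappa)/(1-\theta)$ and hence $(1-\theta)/(1-\kappa)=1/\eta$. Thus Condition~\eqref{Eq:FirstBest_PopulationThres} is equivalent to the compact inequality $N>B/(\eta A)$.

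Next I would invoke Condition~\eqref{Eq:Cond_NonExtreme}, which the paper imposes throughout and which gives $0<B<AN$. Because $A>0$ (as $\beta,\gamma,\mu>0$ and $0<\theta<1$), dividing $B<AN$ by $A$ yields $N>B/A$. Now if $\eta>1$, then $1/\eta<1$, and since $B/A>0$ this gives $B/(\eta A)<B/A<N$. Therefore $N>B/(\eta A)$, so the rewritten form of Condition~\eqref{Eq:FirstBest_PopulationThres} holds and Case~(i)—the mixture of NV and SAV commuters—necessarily arises.

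I expect no serious obstacle here: once the threshold is identified as $B/(\eta A)$, the conclusion is immediate from the already-assumed inequality $B<AN$. The only point that requires care is the algebraic rewriting of the threshold—correctly substituting the definition of $A$ and recognizing the factor $1/\eta$—together with the sign checks $A>0$ and $B>0$, which guarantee that multiplying by $1/\eta<1$ strictly decreases the bound and preserves the strict inequality.
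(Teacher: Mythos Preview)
Your proof is correct and follows essentially the same route as the paper: both rewrite Condition~\eqref{Eq:FirstBest_PopulationThres} as $\eta AN>B$ (you in the form $N>B/(\eta A)$, the paper as $(AN-B)+(\eta-1)AN>0$) and then conclude from the standing assumption $AN-B>0$ together with $\eta>1$. The only difference is cosmetic—division versus additive splitting—so there is nothing substantive to compare.
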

\begin{proof}
	Using the index $\eta$, the condition~\eqref{Eq:FirstBest_PopulationThres} can be rewritten as follows:
	\begin{align}
		(AN-B) + (\eta - 1)AN > 0.
	\end{align}
	Here, the first term on the left-hand side of the equation is positive because of the assumption $AN-B>0$.
	In addition, when $\eta>1$, the second term is always positive since $(\eta-1)>0$ and $AN > 0$.
	This means that the left-hand side is always positive and the condition always holds when $\eta\geq 1$.
	Therefore, both NV and SAV are used in the first-best equilibrium.
\end{proof}

We then compare the equilibrium commuting costs under the first-best policy and MC pricing.
Let $c^{FB*}_{1}$ and $c^{FB*}_{2}$ denote the Lagrange multipliers for cases (i) and (ii).
As discussed earlier, each $c^{FB*}$ represents the equilibrium commuting cost under the first-best policy.
By comparing them with the equilibrium commuting cost under the MC-pricing $c^{MC*}$, we have the following relations:
\begin{itemize}
	\item If $\eta \geq 1$:
		\begin{align}
			c^{MC*} - c^{FB*}_{1} = (\eta - 1)B \geq 0.
		\end{align}
		
	\item If $\eta < 1$: 
		\begin{align}
			c^{MC*} - c^{FB*}_{1} < 0,\quad c^{MC*} - c^{FB*}_{2} = -\eta (AN-B) < 0.
		\end{align}
\end{itemize}
This means that, when $\eta \geq 1$, the equilibrium commuting cost under the first-best policy is always equal to or lower than that under MC pricing, whereas for $\eta < 1$, the equilibrium commuting costs under the first-best policy in both cases exceed that under MC pricing.
We thus obtain the proposition about the Pareto improvement property.

\section{Social cost comparison under the three fare-setting scenarios}\label{Sec:App-SC_Comparison}
This section conducts pairwise comparisons of the social costs under the three fare-setting scenarios. 
By summarizing these comparison results, we establish the propositions.

\subsection{MC pricing vs. AC pricing}

\subsubsection{Comparison between $SC^{MC*}$ and $SC_{2}^{AC*}$}
First, we compare $SC^{MC*}$ with $SC_{2}^{AC*}$.
The difference is given by
\begin{align}
	SC^{AC*}_{2} - SC^{MC*}
	=\cfrac{\eta}{2}(AN-B - K)N -F_{a},
\end{align}
where $K = \sqrt{(AN-B)^{2} - 4AF_{a}}$.
We observe that
\begin{align}
	AN-B-K = \cfrac{(AN-B)^{2} - K^{2}}{AN-B+K} = \cfrac{4AF_{a}}{AN-B+K} > 0.
\end{align}
Using this, the difference can be rewritten as
\begin{align}
	SC^{AC*}_{2} - SC^{MC*}
	&=F_{a} \left[ \eta \cdot \cfrac{2AN}{AN-B+K} - 1  \right].
\end{align}
Because we assume $F_{a}>0$, the sign of the cost difference is determined by
\begin{align}
	&SC^{AC*}_{2} - SC^{MC*} = 
	\begin{cases}
		>0\quad &\text{if}\quad G(N) > 0,\\
		0\quad &\text{if}\quad G(N) = 0,\\
		<0\quad &\text{if}\quad G(N) < 0,
	\end{cases}\\
	&\text{where}\quad G(N) := \eta \cfrac{2AN}{AN-B+K} - 1.
\end{align}
Hence, determining whether $SC^{AC*}_{2} - SC^{MC*}$ is positive, zero, or negative is equivalent to examining the sign of $G(N)$.

Because $N\in (N_{\mathrm{min}}, \infty)$, the function $G(N)$ attains the following boundary values:
\begin{align}
	&G(N_{\mathrm{min}}) = \eta\cdot \left(2 + \cfrac{B}{\sqrt{AF_{a}}}\right) - 1,\\
	&G(N)\xrightarrow{N\to\infty}\eta  - 1.
\end{align}
Moreover, differentiating $G$ with respect to $N$ yields
\begin{align}
	\cfrac{\partial G}{\partial N} 
	= \eta \cfrac{2A(K-AN)}{K(AN-B+K)} < 0,
\end{align}
so $G$ is strictly decreasing in $N$.
Using these results, the sign of $G(N)$ is classified according to the value of $\eta$, as follows:
\begin{itemize}
	\item Case 1: $\eta \geq 1$.
		Because $G(\infty) \geq 0$ and $G$ is strictly decreasing in $N$, we have
		\begin{align}
			G(N) > 0,\quad \text{for every $N\in (N_{\mathrm{\min}}, \infty)$.}
		\end{align}
		
	\item Case 2: $1 > \eta \geq 1/2$.
		Here, $G(\infty) < 0$, while
		\begin{align}
			G(N_{\mathrm{\min}}) = \eta (2 + \cfrac{B}{\sqrt{A F_{a}}}) - 1 > 0.
		\end{align}
		Consequently, there exists a unique critical population size $N = N_{c}^{MC=AC}$ sataisfying $G(N_{c}^{MC=AC}) = 0$, and 
		\begin{align}
			G(N) = 
			\begin{cases}
				>0\quad &\text{if}\quad N_{\mathrm{min}} < N < N_{c}^{MC=AC},\\
				0\quad &\text{if}\quad N = N_{c}^{MC=AC},\\
				<0\quad &\text{if}\quad N_{c}^{MC=AC} < N.
			\end{cases}\label{Eq:App-MCvsAC_G_mediumEta}
		\end{align}

	\item Case 3: $1/2 > \eta$.
		Again, $G(\infty) < 0$. 
		Whether $G(N_{\mathrm{min}})$ is positive or negative depends on $F_{a}$.
		Setting $G(N_{\mathrm{min}}) = 0$ and solving for $F_{a}$ gives the critical fixed cost
		\begin{align}
			F_{a,c}^{MC-AC} = \cfrac{\eta^{2}B^{2}}{(1-2\eta)^{2}A}\quad (>0).
		\end{align}
		\begin{itemize}
			\item If $F_{a} \geq  F_{a,c}^{MC-AC}$, $G(N) \leq 0$ for all $N$, with equality only when $F_{a} =  F_{a,c}^{MC-AC}$ and $N = N_{\mathrm{min}}$.
			\item If $F_{a} < F_{a,c}^{MC-AC}$, we have $G(N_{\mathrm{min}}) > 0$.
				By the monotonicity, a unique critical value $N = N_{c}^{MC=AC}$ exists, and the sign pattern of $G$ is identical to Eq.~\eqref{Eq:App-MCvsAC_G_mediumEta}
		\end{itemize}
\end{itemize}

To determine the critical population size $N_{c}^{MC=AC}$, begin by rewritten the condition $G(N) = 0$:
\begin{align}
	(2\eta - 1)AN + B = \sqrt{(AN-B)^{2}-4AF_{a}},
\end{align}
where the solution must satisfy $(2\eta - 1)AN + B \geq 0$.
Squaring both sides yields the quadratic equation
\begin{align}
	\eta(\eta-1)A N^{2} + \eta B N + F_{a} = 0.
\end{align}
Solving this quadratic gives the candidate critical values
\begin{align}
	N_{c} = \cfrac{\eta B \pm \sqrt{\eta^{2}B^{2} + 4\eta(1-\eta)AF_{a}}}{2\eta(1-\eta)A}.
\end{align}
For the relevant range $0 < \eta < 1$, the expression under the radical is positive, so both roots are real. 
Because 
\begin{align}
	\sqrt{\eta^{2}B^{2} + 4\eta(1-\eta)AF_{a}} > \eta B,
\end{align}
only the plus root is positive.
Hence, the unique admissible critical population size is 
\begin{align}
	N_{c}^{MC=AC} =  \cfrac{\eta B + \sqrt{\eta^{2}B^{2} + 4\eta(1-\eta)AF_{a}}}{2\eta(1-\eta)A}.
\end{align}
Note that this critical population size satisfies the relation $(2\eta - 1)AN_{c} + B\geq 0$.

\subsubsection{Comparison between $SC^{MC*}$ and $SC_{0}^{AC*}$}
We compare $SC^{MC*}$ with $SC_{0}^{AC*}$.
The difference is given as follows:
\begin{align}
	SC^{AC*}_{0} - SC^{MC*} 
	&= \eta(AN-B)N > 0.
\end{align}
Hence, $SC^{MC*} < SC_{0}^{AC*}$.

\subsection{MC pricing vs. unregulated monopoly pricing}
We compare $SC^{MC*}$ with $SC^{m*}$.
The difference is given by
\begin{align}
	SC^{m*} - SC^{MC*}  
	= \cfrac{AN-B}{2}\left( \cfrac{(2\eta -1)AN + B}{2A}  \right)
\end{align}
Because we assume $AN-B > 0$, the sign of the cost difference is determined by
\begin{align}
	&SC^{m*} - SC^{MC*}  = 
	\begin{cases}
		>0\quad &\text{if}\quad G(N) > 0,\\
		0\quad &\text{if}\quad G(N) = 0,\\
		<0\quad &\text{if}\quad G(N) < 0,
	\end{cases}\\
	&\text{where}\quad G(N) := (2\eta -1)AN + B.
\end{align}

The function $G(N)$ attains the following boundary value:
\begin{align}
	G(N_{\mathrm{min}}) 
	= (2\eta -1)\sqrt{4AF_{a}} + 2\eta B
\end{align}
Moreover, differentiating $G$ with respect to $N$ yields
\begin{align}
	\cfrac{\partial G}{\partial N} 
	= (2\eta -1)A,
\end{align}
This means that the sensitivity of $G$ depends on the value of $\eta$.

Using these results, the sign of $G(N)$ is classified, as follows:
\begin{itemize}
	\item Case 1: $\eta > 1/2$.
		Obviously, $G(N_{\mathrm{min}}) > 0$ and $G$ is strictly increasing.
		We thus have
		\begin{align}
			G(N) > 0,\quad \text{for every $N\in (N_{\mathrm{\min}}, \infty)$.}
		\end{align}
		
	\item Case 2: $\eta = 1/2$.
		Here, $G(N_{\mathrm{min}}) = B > 0$, and the sensitivity of $G$ becomes zero.
		We thus have
		\begin{align}
			G(N) > 0,\quad \text{for every $N\in (N_{\mathrm{\min}}, \infty)$.}
		\end{align}

	\item Case 3: $1/2 > \eta$.
		Because $G$ is strictly decreasing, the following statements hold: if $G(N_{\mathrm{min}}) < 0$, then $G(N) < 0$ for every $N\in[N_{\mathrm{min}}, \infty)$;
		otherwise, there exists a unique critical value $N = N_{c}^{MC = m}$ such that $G(N_{c}^{MC=m})$.

		Let us derive the critical fixed cost $F_{a} = F_{a,c}^{MC-m}$ satisfying $G(N_{\mathrm{min}})=0$.
		Setting $G(N_{\mathrm{min}})=0$, we have
		\begin{align}
			 2\eta B = (1 - 2\eta)\sqrt{4AF_{a}}.
		\end{align}
		Since $\eta < 1/2$, the right-hand side is positive;
		squaring both sides and solving for $F_{a}$ yields
		\begin{align}
			F_{a,c}^{MC-m} = \cfrac{\eta^{2}B^{2}}{(1-2\eta)^{2}A}.
		\end{align}
		Note that $F_{a,c}^{MC-m} = F_{a,c}^{MC-AC}$.

		\begin{itemize}
			\item If $F_{a} \geq F_{a,c}^{MC-m}$, then $G(N)\leq 0$, with equality only when $F_{a} = F_{a,c}^{MC-m}$ and $N = N_{\mathrm{min}}$.
			\item If $F_{a} <  F_{a,c}^{MC-m}$, then $G(N_{\mathrm{min}}) > 0$.
				By the monotonicity, there is exactly one critical population size $N = N_{c}^{MC = m}$.
				Accordingly,
				\begin{align}
					G(N) = 
					\begin{cases}
						>0\quad &\text{if}\quad N_{\mathrm{min}}\leq N < N_{c}^{MC=m},\\
						0\quad &\text{if}\quad N = N_{c}^{MC=m},\\
						<0\quad &\text{if}\quad N_{c}^{MC=m} < N.
					\end{cases}
				\end{align}
				Solving the equation $G(N) = 0$ gives the following critical value:
				\begin{align}
					N_{c}^{MC=m} = \cfrac{B}{(1-2\eta)A} \quad (> 0).
				\end{align}
		\end{itemize}
		
\end{itemize}

\subsection{AC pricing vs. unregulated monopoly pricing}

\subsubsection{Comparison between $SC^{m*}$ and $SC_{2}^{AC*}$}
First, we compare $SC^{m*}$ with $SC_{2}^{AC*}$.
The difference is given by
\begin{align}
	SC^{m*} - SC^{AC*}_{2}
	& = \cfrac{K(2\eta AN - K)}{4A}.
\end{align}
Since we assume that $N > N_{\mathrm{\min}}$, $K > 0$.
Hence, 
\begin{align}
	&SS^{AC*}_{2} - SS^{m*}  = 
	\begin{cases}
		>0\quad &\text{if}\quad G(N) > 0,\\
		0\quad &\text{if}\quad G(N) = 0,\\
		<0\quad &\text{if}\quad G(N) < 0,
	\end{cases}\\
	&\text{where}\quad G(N) := 2\eta AN - K.
\end{align}

Let us examine the properties of $G(N)$.
We first obtain 
\begin{align}
	G(N_{\mathrm{min}}) 
	= 2\eta \left( B + \sqrt{4AF_{a}} \right)>0.
\end{align}
Differentiating $G$ with respect to $N$ gives
\begin{align}
	\cfrac{\partial G}{\partial N} 
	= A\left( 2\eta -  \cfrac{AN-B}{K} \right),
\end{align}
so the sensitivity of $G$ depands on the value of $\eta$.

Building on the above results, we can classify the sign of $G(N)$ according to the value of $\eta$.
\begin{itemize}
	\item Case 1: $\eta \geq 1/2$.
		Using the fact that $AN-B > K$, we obtain
		\begin{align}
			G(N) \geq AN - K = AN - B - K + B > B > 0.
		\end{align}
		Thus, $G(N)>0$ for every admissible $N$.
		
	\item Case 2: $1/2 > \eta$.
		In this case,
		\begin{align}
			\cfrac{\partial G}{\partial N}
			= A\left( 2\eta -  \cfrac{AN-B}{K} \right)
			< A\left( 1 - \cfrac{AN-B}{K} \right) < 0.
		\end{align}
		In addition,
		\begin{align}
			G(N) = N\left( 2\eta A - \sqrt{\left(A - \cfrac{B}{N}  \right)^{2} - \cfrac{4AF_{a}}{N^{2}}}\right)
			\xrightarrow{N\to\infty} \infty \cdot (2\eta - 1) A,
		\end{align}
		Because $2\eta - 1 < 0$, it follows that $G(\infty)\rightarrow -\infty$.
		
		Hence, $G(N)$ decreases monotonically from the positive value $G(N\mathrm{\min}) > 0$ to negative values, so there exists a unique critical population size $N_{c}^{AC=m}$ at which $G(N_{c}^{AC=m}) = 0$.
		Consequently,
		\begin{align}
			G(N) = 
			\begin{cases}
				>0\quad &\text{if}\quad N_{\mathrm{min}}\leq N < N_{c}^{AC=m},\\
				0\quad &\text{if}\quad N = N_{c}^{AC=m},\\
				<0\quad &\text{if}\quad N_{c}^{AC=m} < N.
			\end{cases}
		\end{align}

		Let us derive the critical value $G(N_{c}^{AC=m}) = 0$.
		Setting $G(N) = 0$ and rewritting the condition yields
		\begin{align}
			&2\eta AN = K\\
			&\Rightarrow \quad (1 - 4\eta ^{2})A^{2}N^{2} - 2ABN  + B^{2} - 4AF_{a} = 0\\
			&\Rightarrow \quad N = \cfrac{B \pm \sqrt{4\eta^{2}B^{2} +4(1-4\eta^{2})AF_{a})}}{(1-4\eta^{2})A}.
		\end{align}
		Because the critical point must be unique, the smaller root cannot serve: if it did, the larger root would also satisfy $G(N) = 0$, contradicting uniqueness.
		Therefore, the critical value is 
		\begin{align}
			N_{c}^{AC=m} = \cfrac{B + \sqrt{4\eta^{2}B^{2} +4(1-4\eta^{2})AF_{a}}}{(1-4\eta^{2})A},
		\end{align}
\end{itemize}

\subsubsection{Comparison between $SC^{m*}$ and $SC_{0}^{AC*}$}
We compare $SC^{AC*}_{0}$ with $SC^{m*}$.
The difference is given as follows:
\begin{align}
	SC^{m*} - SC^{AC*}_{0} 
	& =  - \cfrac{(AN - B)^{2}}{4A} 
	- \cfrac{\eta}{2}(AN-B)N < 0
\end{align}
Hence, $SC^{m*} < SC_{0}^{AC*}$.

\bibliographystyle{elsarticle-harv} 
\bibliography{BNAnalysis.bib}

\end{document}